\numberwithin{equation}{section}
\newtheorem{theorem}{Theorem}[section]
\newtheorem{lemma}[theorem]{Lemma}
\newtheorem{proposition}[theorem]{Proposition}
\newtheorem{corollary}[theorem]{Corollary}
\newtheorem{definition}[theorem]{Definition}
\theoremstyle{definition}
\newtheorem{remark}[theorem]{Remark}
\newtheorem{example}[theorem]{Example}
\newcommand\uu{u_{1}}
\newcommand\ud{u_{2}}
\newcommand\vu{v_{1}}
\newcommand\vd{v_{2}}
\newcommand{\ds}{\displaystyle}
\newcommand{\sj}{s_{j} }
\newcommand\RR{{\mathbb{R}}}
\newcommand\CC{{\mathbb{C}}}
\newcommand\NN{{\mathbb{N}}}
\newcommand\ZZ{{\mathbb{Z}}}
\newcommand\cI{{\mathcal{I}}}
\newcommand\cS{{\mathcal{S}}}
\newcommand\ciC{{\mathcal{C}}}
\newcommand\unme{\frac{1}{2}}
\newcommand\xu{x_{1}}
\newcommand\xd{x_{2}}
\newcommand\xt{x_{3}}
\newcommand\xiu{\xi_{1}}
\newcommand\xid{\xi_{2}}
\newcommand\psiarg{\psi\left(\xuxd,\xuxt\right)}
\newcommand\xuxd{\frac{\xd}{\xu^{m}}}
\newcommand\xuxt{\frac{\xt}{\xu^{n}}}
\newcommand\sgn{\text{sgn}}
\newcommand\Sch{{\mathcal{S}}}
\newcommand\lime{\ds\lim_{{\varepsilon\to 0^+}}}
\newcommand\hpu{\overline{p}_{1}}
\newcommand\hpd{\overline{p}_{2}}
\newcommand\tpu{\tilde{p}_{1}}
\newcommand\tpd{\tilde{p}_{2}}
\newcommand\insieme{\RR^2\setminus\{(0,0)\}}
\begin{document}

\title
{Product kernels adapted to curves in the space}
\author{ Valentina Casarino}
\author{Paolo Ciatti}
\author{Silvia Secco}
\address{ Dipartimento di Matematica\\
Politecnico di Torino\\
Corso Duca degli Abruzzi 24\\10129 Torino}
\address{ 
Dipartimento di Metodi e Modelli matematici per le scienze applicate\\
Via Trieste 63, 35121 Padova}
\email{valentina.casarino@polito.it, ciatti@dmsa.unip.it, silvia.secco@polito.it}
\thanks{}
\keywords{ Product kernels, $L^p$ estimates, convolution, Bernstein-Sato polynomials}
\subjclass{42B20, 44A35}
\date{\today}

\maketitle

\begin{abstract}
We establish  $L^p$-boundedness
for a class of operators that are given by convolution with product kernels adapted to
 curves in the 
space. 
The $L^p$ bounds follow from the decomposition of the adapted kernel 
into a sum
of two kernels with sigularities
concentrated respectively
on a coordinate plane and along 
the curve.

The proof of the $L^p$-estimates  for the two corresponding operators involves
Fourier analysis techniques  and
some algebraic tools, namely
 the Bernstein-Sato polynomials.
  \end{abstract}

\section{Introduction}
\medskip
The purpose of this paper is to establish  $L^p$ boundedness for a class of
product-type  convolution operators.

In the last thirty years the theory of 
singular integrals on product domains
has been largely developed.
The first    case which was considered is that
of a convolution operator $Tf=K*f$ on $\RR^{d_1}\times \RR^{d_2}$,
with $K(x,y)=K_1(x)K_2 (y)$,
$x\in\RR^{d_1}$, $y\in\RR^{d_2}$,
 $K_1$ and $K_2$ being of Calder\'on-Zygmund type. 
 In this case  a simple iteration argument yields the $L^p$ boundedness of $T$.
A more involved situation is that of 
a convolution operator $T$, whose kernel $K$ 
is defined on
$\RR^{d_1}\times \RR^{d_2}$
and satisfies all the analogous bounds 
to those 
satisfied by 
$K_1 K_2$ on 
$\RR^{d_1}\times \RR^{d_2}$, but cannot be decomposed
as product of two kernels
$K_1(x)$ and $K_2(y)$.
A precise definition of such kernels, which are called "product kernels",
was introduced in terms
of  certain differential inequalities and
suitable cancellation conditions.

Several conditions on $K$, guaranteeing the $L^p$
boundedness of the operator $T$, have been introduced \cite{FSt}, 
and 
many applications of  the product theory to the operators arising in certain boundary value problems
have been studied \cite{NagelRicciStein}, \cite{NagelStein2}.
.
Moreover, the euclidean spaces 
$ \RR^{d_j}$, $j=1,2$, have been replaced
by appropriate nilpotent groups \cite{MRS}, \cite{NagelRicciStein}, and by smooth
manifolds with a geometry determined by a control distance \cite{NagelStein1}.

Recently, one of the authors studied the $L^p$ boundedness for  convolution operators
with kernels obtained adapting product kernels to curves in the plane \cite{Secco}.
Here we extend 
these results to  higher dimensional spaces.
In order not to burden the exposition with 
notational complexities, we are going to give the full details
only for the space $\RR^3$, with $d_1=1$ and $d_2=2$.
In the last section we shall quickly 
describe how the arguments should be modified in the higher dimensional setting.

We denote an element of $\RR^3=\RR\times\RR^2$ by the pair $(x_1,x)$, where
$x=(x_2,x_3) \in \RR^2$.
On $\RR$ we consider the usual  dilations by $\delta>0$, while on
$\RR^2$ we consider the anisotropic dilation given by
\begin{equation}  \label{dilations_in_R^2}
\delta \circ x = (\delta^\frac{1}{2n}x_2,\delta^\frac{1}{2m}x_3)\,,\qquad {\text{with}}\quad
\delta>0\,,\,m,n\in\NN\,,\, m<n\,.
\end{equation}
We denote by 
\begin{equation}\label{dimomog}
Q=\frac{1}{2n}+\frac{1}{2m} 
\end{equation}
 the homogeneous
dimension of $\RR^2$ with respect to the dilations 
(\ref{dilations_in_R^2})
 and by $\rho(x) = x_2^{2n} +x_3^{2m} $ a smooth
homogeneous norm on $\RR^2$.

In this context the proto-typical example of a product kernel in
$\RR^3$ (we refer to  Section 3 for a precise definition)  is given by the distribution
\begin{equation}\label{nucleoH}
H(x_1,x) = C_{\mu}\,p.v.\frac{1}{x_1}\rho(x)^{-Q+i\mu}\,,
\qquad\mu\in\RR\setminus \{ 0 \}\,.
\end{equation}

Throughout the paper we concentrate our attention on 
the convolution by  product-type kernels in $\RR^3$ whose
singularities are supported 
on a coordinate plane and on a
transversal curve of finite type.
A rather simple example of such a kernel is
$$
K(x_1,x) = C_{\mu}\,p.v.\left(\frac{1}{x_1}\right)\rho(x-\gamma(x_1))^{-Q+i\mu}\,,\text{ $\mu\in\RR \setminus \{ 0 \}$}\,,
$$
where
$\gamma:\RR\to\RR^2$
is the curve $\gamma(x_1)=(x_1^m,x_1^n)$.

More generally, we introduce  the following class of product-type kernels.
\begin{definition}
Assume that $K_0$ is a product kernel on $\RR^3$ and consider the
curve $x=\gamma(x_1)$ with $\gamma(x_1)= (x_1^m,x_1^n)$, $x_1 \in
\RR$. We define a distribution $K$ by
\begin{equation} \label{adapted_product_kernel_def}
\displaystyle \int K(x_1,x)f(x_1,x)\,dx_1dx := \int
K_0(x_1,x)f(x_1,x+\gamma(x_1))\,dx_1dx
\end{equation}
for a Schwartz function $f$ on $\RR^3$.
$K$ will be called an adapted kernel.

Here with an abuse of notation we write pairings between
distributions and test functions as integrals.
\end{definition}

The kernel $K$ given by the formula
(\ref{adapted_product_kernel_def}) is a well-defined tempered
distribution which is singular on the coordinate plane $x_1=0$ and
along the curve $x=\gamma(x_1),$ $x_1\in \RR$.

We shall prove the following theorem.

\begin{theorem}\label{L^p_boundedness}
Let $K$ be the distribution defined by the formula
(\ref{adapted_product_kernel_def}). Then the 
convolution operator 
 $T: f\mapsto
f\ast K$, initially defined on the Schwartz space $\Sch (\RR^3)$, extends to a bounded
operator on $L^p(\RR^3)$ for $1<p<\infty$.
\end{theorem}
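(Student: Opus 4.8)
The plan is to follow the strategy announced in the abstract: decompose the adapted kernel $K$ into two pieces, one whose singularity is concentrated on the coordinate plane $\{x_1=0\}$ and one whose singularity is concentrated along the curve $x=\gamma(x_1)$, and then prove $L^p$-boundedness for each of the two resulting convolution operators separately. To produce the decomposition I would start from the defining formula \eqref{adapted_product_kernel_def} and from the structural description of product kernels recalled in Section~3; a product kernel $K_0$ on $\RR\times\RR^2$ admits (at least after a suitable dyadic or Littlewood--Paley analysis in the two factors separately) a splitting $K_0 = K_0' + K_0''$ in which $K_0'$ is smooth away from $\{x_1=0\}$ and has a Calder\'on--Zygmund character in $x_1$ uniformly in $x$, while $K_0''$ is smooth away from $\{x=0\}$ and Calder\'on--Zygmund (with respect to the dilations \eqref{dilations_in_R^2}) in the $x$ variable. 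Transplanting this splitting through the change of variables $x\mapsto x+\gamma(x_1)$ yields $K = K^{(1)}+K^{(2)}$ with the stated localization of singularities.

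For the first piece, $K^{(1)}$ is essentially a kernel of the form ``$\mathrm{p.v.}\,\tfrac1{x_1}$ tensored with a nice kernel in $x$'', shifted along the curve; here I would freeze the $x$-behaviour and view the operator as a vector-valued singular integral in the $x_1$ variable, with values in a space of operators acting on $L^p(\RR^2_x)$ that are bounded uniformly in $x_1$ (the shift by $\gamma(x_1)$ being an isometry on $L^p(\RR^2)$). Boundedness on $L^p(\RR^3)$ then follows from the Benedek--Calder\'on--Panzone theorem on vector-valued Calder\'on--Zygmund operators, once one checks the H\"ormander-type condition in $x_1$ — which is where the finite-type hypothesis $\gamma(x_1)=(x_1^m,x_1^n)$ and the compatibility of $\gamma$ with the anisotropic dilations enter, ensuring the $x$-kernel varies slowly enough as $x_1$ moves.

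For the second piece, $K^{(2)}$ has its singularity along the curve and behaves, in the $x$-variable and for each fixed $x_1\ne 0$, like $\rho(x-\gamma(x_1))^{-Q+i\mu}$ up to smoother error terms; after a partial Fourier transform in $x_1$ (or after localizing dyadically in $|x_1|$ and rescaling via the dilations so that $\gamma(x_1)$ is normalized) the operator becomes, uniformly, a Mikhlin--H\"ormander multiplier or a standard Calder\'on--Zygmund operator on $\RR^2$. This is the step where the Bernstein--Sato polynomial of $\rho$ is needed: to make sense of and analytically continue the homogeneous distribution $\rho(x)^{-Q+i\mu}$ (and its relatives $\rho(x)^{s}$ for complex $s$ near $-Q$), to identify its Fourier transform, and to control the multiplier bounds as the $x_1$-parameter varies; one reassembles the pieces by summing the dyadic contributions, the finite-type condition again guaranteeing geometric decay or at worst a summable overlap.

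The main obstacle I expect is the \textbf{second operator}, specifically the interaction between the curve and the anisotropic homogeneity: unlike the planar case treated in \cite{Secco}, here $\gamma(x_1)=(x_1^m,x_1^n)$ is \emph{not} homogeneous under the dilations \eqref{dilations_in_R^2} unless $m=n$, so a single rescaling cannot simultaneously normalize both components. One must therefore run a more delicate two-parameter (or iterated) dyadic decomposition in $x_1$ and handle the mismatch between the scales $|x_1|^m$ and $|x_1|^n$, and it is precisely in controlling the resulting error terms uniformly — keeping the multiplier estimates bounded and the sum over scales convergent — that the Bernstein--Sato machinery for $\rho$ does the essential work. The cancellation conditions built into the definition of a product kernel are what allow the $x_1$-integration to converge near $x_1=0$ after all the pieces are in place.
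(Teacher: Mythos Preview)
Your high-level plan --- split $K$ into two pieces and treat each separately --- matches the paper, but the specific decomposition you propose does not exist, and the methods you sketch for the two pieces diverge substantially from what the paper actually does.

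The main gap is the splitting itself. A genuine product kernel such as $\tfrac{1}{x_1}\,\rho(x)^{-Q+i\mu}$ \emph{cannot} be written as $K_0'+K_0''$ with $K_0'$ smooth away from $\{x_1=0\}$ and $K_0''$ smooth away from $\{x=0\}$: the singularities on the two coordinate subspaces are coupled and cannot be separated by any cutoff. (Try it already for $\tfrac{1}{x_1}\cdot\tfrac{1}{|x|}$ on $\RR\times\RR$: any candidate $K_0'$ that is smooth at $(1,0)$ forces $K_0''$ to carry the full $1/|x|$ blow-up there, but then near $(0,1)$ the singularity of $K_0$ in $x_1$ cannot be absorbed by $K_0''$, which must be smooth there.) The paper's decomposition is instead \emph{scale-based}: writing $K_0=\sum_{J}2^{-j_1-jQ}\varphi_J(2^{-j_1}x_1,2^{-j}\circ x)$ as in Lemma~\ref{dyadic_decomposition}, one splits according to whether $2mn\,j_1\le j$ or $2mn\,j_1>j$. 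The threshold $j=2mn\,j_1$ is precisely the relation $\gamma(2^{j_1}x_1)=2^{j}\circ\gamma(x_1)$, i.e.\ the $x$-scale at which the curve lives when $x_1$ is at scale $2^{j_1}$. In $K_1$ the curve shift is small relative to the $x$-resolution; in $K_2$ the $x$-support is tightly concentrated around the curve.

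The methods for the two pieces are also different from yours. For $K_1$ the paper does not use vector-valued Calder\'on--Zygmund theory; it verifies directly, via the oscillatory-integral Lemmas~\ref{oscillatory integrals_1}--\ref{oscillatory integrals_3}, that $\widehat{K_1}$ satisfies anisotropic Marcinkiewicz multiplier conditions on $\RR\times\RR^2$ (Proposition~\ref{L^p_boundedness_of_T_1}). For $K_2$ the argument is \emph{analytic interpolation}, not a direct multiplier or Calder\'on--Zygmund bound: one embeds $T_2$ in a family $T_{2,z}$ obtained by convolving each dyadic piece, in the $x$-variable, with a truncated Riesz-type kernel $B^z$ built from $I^z=C_z\,\rho^{\,z-Q}$. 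One proves $L^2$-boundedness for $\Re e\,z$ slightly negative (multiplier estimates) and $L^p$-boundedness for $0<\Re e\,z<Q$ (by showing $K_{2,z}$ is a one-parameter Calder\'on--Zygmund kernel for the dilations $\delta\bullet\cdot$, via the Besov criterion of Theorem~\ref{Besov}), and then interpolates to $z=0$. The Bernstein--Sato polynomial of $\rho$ is used not to interpret $\rho^{-Q+i\mu}$ per se, but to locate the poles of $z\mapsto\rho^{\,z-Q}$ and thereby build the normalizing gamma-product $G(z)$ in \eqref{GMN}--\eqref{IMN} so that $I^0=\delta_0$; without this normalization $T_{2,0}$ would not coincide with $T_2$ and the interpolation would say nothing about the original operator.
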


To prove Theorem \ref{L^p_boundedness} 
we decompose the adapted kernel $K$
of $T$ as the sum of a kernel $K_1$
 with singularities concentrated on the coordinate plane $x_1=0$
and of a kernel $K_2$ 
singular 
along the curve $x=\gamma(x_1),$ $x_1\in \RR$. As in \cite{Secco}
we show that the multiplier associated with $K_1$ 
satisfies some Marcinkiewicz-type  conditions, while $K_2$
is treated by means of analytic interpolation (our proof is inspired by some arguments used in 
\cite{SteinWainger} to prove the $L^p$ boundedness of the Hilbert transform
along curves in the plane).
In particular, to apply the analytic interpolation method
we need to introduce a non-isotropic version of the Riesz potentials
$$\cI^z (\uu,\ud):= \left( \rho(\uu,\ud)\right)^{z-Q}\,,
\qquad z\in\CC\,.$$
and to determine their 
meromorphic continuation.
To extend in a  meromorphic way 
$\cI^z $,  we  study the location of its singularities using
 Bernstein-Sato polynomials.
Since the reader is not assumed to be familiar with Bernstein-Sato functional identities, 
we illustrate the definition 
and the basic properties of this algebraic tool in Section 2.

In the last section we shall discuss how $L^p$ bounds for convolution by product kernels adapted to curves
are related  in a natural way  to the study of $L^p-L^q$ estimates 
for analytic families of fractional operators \cite{CCiSe}.

Throughout the paper we will use the Òvariable constant conventionÓ, and denote by C, possibly with
sub- or superscripts, a constant that may vary from place to place.

\section{ Bernstein-Sato polynomials and a family of Riesz-type kernels}
Consider the polynomial
\begin{equation}\label{polinomio}
\rho (\uu,\ud):=\uu^{ 2n}+\ud^{2m}\,,
\end{equation}
with 
$m,n\in\NN\,,\,\, m\ge1\,,\,\, n>m\,$. Observe that $\rho$ is
homogeneous with respect to the one-parameter family of non
isotropic dilations
 given by
(\ref{dilations_in_R^2}).

We shall often use, in the following, 
the relations between $\rho$ and  the euclidean norm  $|\cdot|$
 in $\RR^2$
\begin{equation}\label{stimerho}
A\rho (u)^{{\frac{1}{2n}}}\le |u|\le B\rho (u)^{{\frac{1}{2m}}}
\;\;\;\text{ if $\rho(u)>1$}
\end{equation}
and 
\begin{equation}\label{stimerho1}
A'\rho (u)^{{\frac{1}{2m}}}\le |u|\le B'\rho (u)^{{\frac{1}{2n}}}
\;\;\;\text{ if $\rho(u)\le 1$}
\end{equation}
for some $A, B, A', B'>0$.

Define now the distribution
\begin{equation}\label{definizione Iz}
\cI^z (\uu,\ud):= \left( \rho(\uu,\ud)\right)^{z-Q}\,,
\end{equation}
where $z\in\CC$, $\Re e z>0$.

Observe that both $\rho$ and $\cI^z$ depend on $m$ and $n$.
Anyway, for the sake of semplicity,
we shall avoid to indicate the dependence on $m$ and $n$.

We collect in the next proposition some obvious properties of $\cI^z$.

\begin{proposition}
If 
$\Re e z
>0$,
then 
\item i) $\cI^z$ is well defined as distribution and locally integrable.

\item ii) $\cI^z$ is a tempered distribution.

\item iii) $\cI^z$ is an analytic family of tempered distributions, that
is, given $f\in\cS(\RR^2)$, the functions $z\mapsto<\cI^z, f>$ are
holomorphic.
 \end{proposition}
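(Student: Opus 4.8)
The plan is to verify the three assertions in sequence, each following essentially from the homogeneity of $\rho$ together with the positivity hypothesis $\Re e\,z>0$.

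For i), I would first note that $\rho(\uu,\ud)=\uu^{2n}+\ud^{2m}$ is a nonnegative polynomial that vanishes only at the origin, so $\cI^z=\rho^{z-Q}$ is smooth away from $(0,0)$; the only issue is local integrability near the origin. I would switch to the non-isotropic polar-type coordinates adapted to the dilations (\ref{dilations_in_R^2}): writing $\uu=r^{\frac{1}{2n}}\vu$, $\ud=r^{\frac{1}{2m}}\vd$ with $(\vu,\vd)$ on the unit circle $\rho=1$, the Jacobian contributes a factor $r^{Q-1}$ (where $Q=\frac{1}{2n}+\frac{1}{2m}$ is precisely the homogeneous dimension introduced in (\ref{dimomog})), while $\rho(\uu,\ud)=r$. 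Hence near the origin $|\cI^z|\,d\uu\,d\ud$ behaves like $r^{\Re e\,z-Q}\cdot r^{Q-1}\,dr = r^{\Re e\,z-1}\,dr$, which is integrable on a neighbourhood of $0$ exactly because $\Re e\,z>0$. Away from the origin $\rho^{z-Q}$ is bounded on compact sets, so $\cI^z\in L^1_{\mathrm{loc}}(\RR^2)$ and defines a distribution by integration against test functions.

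For ii), I would show the locally integrable function $\cI^z$ has at most polynomial growth at infinity, which guarantees it pairs with Schwartz functions. Using (\ref{stimerho}) for $\rho(u)>1$, one has $\rho(u)\le C|u|^{2n}$ wait -- more carefully, $\rho(u)^{1/(2n)}\le |u|/A$, so $\rho(u)\le A^{-2n}|u|^{2n}$, hence for $\Re e\,z-Q\ge 0$ we get $|\cI^z(u)|\le C|u|^{2n(\Re e\,z-Q)}$, a polynomial bound; for $\Re e\,z-Q<0$ the function is bounded outside the unit ball. Combined with the local integrability from i), this shows $\cI^z$ defines a tempered distribution: the pairing $\langle\cI^z,f\rangle=\int \rho(u)^{z-Q}f(u)\,du$ converges absolutely for $f\in\cS(\RR^2)$, and is continuous in the Schwartz topology since it is dominated by a finite sum of Schwartz seminorms of $f$.

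For iii), fix $f\in\cS(\RR^2)$ and consider $F(z):=\langle\cI^z,f\rangle=\int_{\RR^2}\rho(u)^{z-Q}f(u)\,du$ on the half-plane $\Re e\,z>0$. The integrand $z\mapsto\rho(u)^{z-Q}f(u)$ is holomorphic in $z$ for each fixed $u\neq 0$ (as $\rho(u)>0$), and on any compact subset $\{a\le\Re e\,z\le b\}$ with $a>0$ one has the domination $|\rho(u)^{z-Q}f(u)|\le \big(\rho(u)^{a-Q}+\rho(u)^{b-Q}\big)|f(u)|$, whose right-hand side is an integrable function of $u$ independent of $z$ by the estimates used in i) and ii). Morera's theorem together with Fubini then gives that $F$ is holomorphic on $\Re e\,z>0$.

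I do not expect a serious obstacle here: the proposition is labelled "obvious" in the text, and the only mild care needed is the change of variables to non-isotropic polar coordinates to pin down the precise local integrability threshold $\Re e\,z>0$, plus the routine application of the differentiation-under-the-integral (Morera) argument for analyticity. The genuinely substantive work -- the meromorphic continuation of $\cI^z$ past $\Re e\,z=0$ via Bernstein-Sato polynomials -- is deferred to later results and is not part of this statement.
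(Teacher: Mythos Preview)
Your argument is correct and complete. The paper does not actually supply a proof of this proposition: it is introduced with the sentence ``We collect in the next proposition some obvious properties of $\cI^z$'' and left unproved. Your verification via non-isotropic polar coordinates for local integrability, polynomial growth at infinity for temperedness, and the Morera/Fubini argument for analyticity is precisely the routine justification the authors are implicitly invoking, so there is no alternative approach to compare against. (One cosmetic remark: the ``wait --- more carefully'' aside in part ii) reads as internal monologue; in a final write-up you would simply state the correct bound directly.)
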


We shall now prove that $\cI^z$ admits a meromorphic extension, with
poles in a at most countable set, consisting of rational negative
points. Our method is based on the theory of  Bernstein-Sato polynomials.

It is well-known in algebra that, given a non-zero polynomial $p
(\uu,\ud)$ with complex coefficients, there exist a non-zero
polynomial $b_{p}(s)\in\CC[s]$ and a differential operator $L(s)$
whose coefficients are polynomials in $s\,,\uu\,,\ud\,$, such
that formally
\begin{equation}
\label{BS} L(s)\left( p (\uu,\ud)\right)^{s+1}= b_{p}(s)\left( p
(\uu,\ud)\right)^{s}\,\text{ for all $s\in\CC$}\,.
\end{equation}

The set of all polynomials $b_{p}(s)\in\CC[s]$ satisfying this
formal identity (for some operator $L$) is an ideal, and the
unique monic generator of this ideal is called the {\it{Bernstein-Sato
polynomial }} of $p$.

In our case, for $\Re e z>0$ we may write
\begin{equation}
\label{BSzQ} \left( \rho(\uu,\ud)\right)^{z-Q}=\frac{ L(z-Q)\left(
\rho(\uu,\ud)\right)^{z+1-Q}}{ b_{\rho}(z-Q)}.
\end{equation}
By repeatedly using the functional equation ($\ref{BSzQ}$) we may
extend $\cI^z$ to the complex plane in a meromorphic way, with poles
whenever $b_{\rho}(z-Q+k)$ vanishes for a non-negative integer $k$.
Therefore we shall now seek for the zeros of the Bernstein-Sato
polynomial $b_{\rho}(z-Q)$.
\par
According to a theorem of Kashiwara, the roots of the Bernstein-Sato
polynomial are negative rational numbers. 
Moreover, if $\rho $ has the particularly simple form given in
($\ref{polinomio}$), it is easy to find the roots of $b_{ \rho}(s)$.
\begin{lemma}\label{ lemmaradici}
If $b_{ \rho}(s)$ denotes the Bernstein-Sato polynomial associated
to $\rho (\uu,\ud):= \uu^{ 2n}+\ud^{ 2m}$, $m<n$, $m,n\in\NN$, then
the roots of $\frac{ b_{ \rho}(s)}{ s+1}$ are given by
\begin{equation}\label{radiciKa}
-\frac{p_1 }{2n }-\frac{p_2 }{2m }\,, \;\; 1\le p_1\le 2n-1\,, \quad
1\le p_2\le 2m-1\,,
\end{equation}
with multiplicity one.
\end{lemma}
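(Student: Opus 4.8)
The theorem of Kashiwara quoted above already guarantees that the roots of $b_\rho$ are negative rationals; the plan is to pin them down exactly, exploiting that $\rho(\uu,\ud)=\uu^{2n}+\ud^{2m}$ is of Brieskorn--Pham type, so that the computation reduces to the one variable case. The starting point is the elementary identity, valid for every $a\in\NN$,
\[
\partial_u^{a}\bigl(u^{a(s+1)}\bigr)=a^{a}\prod_{k=1}^{a}\Bigl(s+\frac{k}{a}\Bigr)\,u^{as},
\]
which shows that $L(s)=a^{-a}\partial_u^{a}$ realises \eqref{BS} for $p(u)=u^{a}$, so that $b_{u^{a}}(s)$ divides $\prod_{k=1}^{a}(s+k/a)$; conversely every $-k/a$ is forced to be a root, because the one-dimensional distribution $(u^{a})^{s}$ on the half-line $u>0$ has a simple pole at $s=-k/a$. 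Hence $b_{u^{a}}(s)=\prod_{k=1}^{a}(s+k/a)$ and $b_{u^{a}}(s)/(s+1)=\prod_{k=1}^{a-1}(s+k/a)$.

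Next I would use that $\rho$ is quasi-homogeneous of degree one for the dilations \eqref{dilations_in_R^2} --- the weights of $\uu$ and $\ud$ being $\tfrac{1}{2n}$ and $\tfrac{1}{2m}$ --- and has an isolated critical point at the origin, since $\nabla\rho=(2n\,\uu^{2n-1},2m\,\ud^{2m-1})$ vanishes only there. Its Milnor algebra $\CC[\uu,\ud]/(\uu^{2n-1},\ud^{2m-1})$ therefore has the monomial basis $\{\uu^{i}\ud^{j}:0\le i\le 2n-2,\ 0\le j\le 2m-2\}$, whose weighted degrees are $\tfrac{i}{2n}+\tfrac{j}{2m}$. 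By the explicit formula for the Bernstein--Sato polynomial of a quasi-homogeneous polynomial with isolated singularity (equivalently, by the Thom--Sebastiani behaviour of $b$-functions under sums in separate variables, applied to the one-variable computation above), the roots of $b_\rho(s)/(s+1)$ are obtained from these weighted degrees by adding $Q$ and changing sign: with $p_1=i+1$, $p_2=j+1$ and $Q$ as in \eqref{dimomog}, they are the numbers $-\tfrac{p_1}{2n}-\tfrac{p_2}{2m}$, $1\le p_1\le 2n-1$, $1\le p_2\le 2m-1$, which are exactly the pairwise sums of the roots of $b_{\uu^{2n}}(s)/(s+1)$ and of $b_{\ud^{2m}}(s)/(s+1)$. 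This yields \eqref{radiciKa}.

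The only step that is not routine bookkeeping is the claim of multiplicity one, for the map $(p_1,p_2)\mapsto \tfrac{p_1}{2n}+\tfrac{p_2}{2m}$ need not be injective. What is being asserted is that the reduced $b$-function of a quasi-homogeneous isolated hypersurface singularity is squarefree: in the quasi-homogeneous case the residue of the relevant Gauss--Manin connection is semisimple, with eigenvalues governed by the numbers $\tfrac{p_1}{2n}+\tfrac{p_2}{2m}$, and $b_\rho(s)/(s+1)$, being the minimal polynomial of a semisimple operator, is a product of distinct linear factors. Establishing this squarefreeness --- and, conversely, that each of the $(2n-1)(2m-1)$ candidate values is genuinely a root --- is where the structure theory is really used, and I expect it to be the main obstacle. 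A more elementary route that avoids it is to construct $L(s)$ explicitly from the Euler operator $\tfrac{1}{2n}\uu\partial_1+\tfrac{1}{2m}\ud\partial_2$ and the Jacobian ideal $(\uu^{2n-1},\ud^{2m-1})$ of $\rho$, or to split $(\uu^{2n}+\ud^{2m})^{s}$ through a Mellin--Barnes integral and read off the poles of the product distributions $\uu^{2n(s+w)}\,\ud^{-2mw}$.
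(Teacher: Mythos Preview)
Your outline is substantially more detailed than the paper's own proof, which consists entirely of citations: ``It is essentially due to Kashiwara \cite{Kashiwara}. See also \cite{Malgrange} and [BMSa, remark 3.8].'' So there is no argument in the paper to compare against; the authors are simply invoking the known description of the $b$-function of a quasi-homogeneous isolated singularity.

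Your sketch is correct and is precisely the content behind those citations. The one-variable computation, the Milnor algebra basis $\{\uu^{i}\ud^{j}:0\le i\le 2n-2,\ 0\le j\le 2m-2\}$, and the identification of the roots of $b_\rho(s)/(s+1)$ with the negatives of $Q$ plus the weighted degrees of that basis --- this is exactly the quasi-homogeneous formula (going back to Kashiwara and worked out by Malgrange and Yano). Your treatment of the multiplicity-one statement is also accurate: the map $(p_1,p_2)\mapsto \tfrac{p_1}{2n}+\tfrac{p_2}{2m}$ is not injective in general (already for $m=2$, $n=3$ there are collisions, as Example~\ref{ esempio6-4} confirms: fifteen pairs but only thirteen distinct roots), and ``multiplicity one'' is to be read as squarefreeness of $b_\rho(s)/(s+1)$, which does follow from the semisimplicity of the residue of the Gauss--Manin connection in the quasi-homogeneous case. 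In short, you have unpacked the citation rather than diverged from it.
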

\begin{proof}
It is essentially due to Kashiwara \cite{Kashiwara}. See also \cite{Malgrange}
 and [BMSa, remark
3.8].
\end{proof}
 In the following corollary we collect some observations, which will be
useful in the following.
\begin{corollary}
i) The largest root of $b_{\rho}(s)$ is $-Q=-
\frac{1}{2m}-\frac{1}{2n}
$.
\par\noindent
ii) $-1$ is a root of $b_{\rho}(s)$ with multiplicity two.
\par\noindent
iii) The set of the roots of $b_{\rho}(s)$ is symmetric with respect
to $-1$.
\end{corollary}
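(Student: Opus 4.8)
The plan is to read off all three assertions directly from Lemma \ref{ lemmaradici}, which describes the roots of $b_\rho(s)/(s+1)$ explicitly, together with the elementary fact that $s=-1$ is itself a root of $b_\rho(s)$ (this is part of the general theory: $b_p(s)$ is always divisible by $s+1$, since the functional equation \eqref{BS} at $s=-1$ reduces to the identity $L(-1)\,1 = b_p(-1)\,p^{-1}$, forcing $b_p(-1)=0$). Thus the full multiset of roots of $b_\rho(s)$ consists of the numbers in \eqref{radiciKa}, each with multiplicity one, together with one extra root at $-1$.

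For part (i): among the numbers $-\frac{p_1}{2n}-\frac{p_2}{2m}$ with $1\le p_1\le 2n-1$ and $1\le p_2\le 2m-1$, the one closest to zero is obtained by taking $p_1=p_2=1$, giving $-\frac{1}{2n}-\frac{1}{2m}=-Q$. Since every such number is at most $-Q<-1$ except we must also compare with the extra root $-1$: we have $-Q = -\frac{1}{2n}-\frac{1}{2m}$, and since $m,n\ge 1$ with $n>m\ge 1$, one checks $\frac{1}{2n}+\frac{1}{2m}\le \frac12+\frac12=1$ with equality only if $m=n=1$, which is excluded by $m<n$; hence $-Q>-1$. So $-Q$ is indeed the largest root of $b_\rho(s)$. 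For part (ii): the value $-1$ is achieved inside the family \eqref{radiciKa} precisely when $\frac{p_1}{2n}+\frac{p_2}{2m}=1$, i.e. $p_1 m + p_2 n = 2mn$; this has the solution $p_1=n$, $p_2=m$ (note $1\le n\le 2n-1$ and $1\le m\le 2m-1$ since $n,m\ge 1$), which lies in the allowed range, so $-1$ occurs once among the roots of $b_\rho(s)/(s+1)$; combined with the extra factor $(s+1)$ this gives multiplicity exactly two, provided no other pair $(p_1,p_2)$ in range also yields $-1$ — this requires a short check that the only solution of $p_1 m+p_2 n=2mn$ in the box $[1,2n-1]\times[1,2m-1]$ is $(n,m)$, which follows because if $p_1\le n-1$ then $p_2 n = 2mn-p_1 m \ge 2mn-(n-1)m = mn+m$, so $p_2\ge m+\frac{m}{n}>m$, forcing $p_2\ge m+1$ and then $p_1 m = 2mn-p_2 n \le 2mn-(m+1)n = mn-n$, so $p_1\le m-\frac{n}{m}<m\le n-1$, and iterating drives $p_1$ below $1$; symmetrically for $p_1\ge n+1$.

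For part (iii): the map $p_1\mapsto 2n-p_1$, $p_2\mapsto 2m-p_2$ is an involution of the index box $\{1\le p_1\le 2n-1\}\times\{1\le p_2\le 2m-1\}$, and it sends the root $-\frac{p_1}{2n}-\frac{p_2}{2m}$ to $-\frac{2n-p_1}{2n}-\frac{2m-p_2}{2m} = -2 + \frac{p_1}{2n}+\frac{p_2}{2m} = -2-\bigl(-\frac{p_1}{2n}-\frac{p_2}{2m}\bigr)$, i.e. to the reflection of the original root across $-1$. Hence the root set of $b_\rho(s)/(s+1)$ is symmetric about $-1$, and since the remaining root $-1$ is its own reflection, the full root set of $b_\rho(s)$ is symmetric about $-1$ as well.

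I do not expect any genuine obstacle here: every claim is a direct combinatorial consequence of the explicit list in Lemma \ref{ lemmaradici} plus the divisibility of $b_\rho$ by $s+1$. The only point requiring a little care is the uniqueness argument in part (ii) — verifying that $(p_1,p_2)=(n,m)$ is the sole lattice point in the box solving $p_1 m + p_2 n = 2mn$ — but this is the elementary estimate sketched above and presents no real difficulty.
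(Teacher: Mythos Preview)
Your argument follows essentially the same route as the paper's: parts (i) and (iii) are handled identically (minimize over the index box; use the involution $(p_1,p_2)\mapsto(2n-p_1,2m-p_2)$). Your additional check in (i) that $-Q>-1$ is a welcome detail the paper leaves implicit.

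The one point that needs correction is in (ii). Lemma~\ref{ lemmaradici} asserts that the roots of $b_\rho(s)/(s+1)$ are the values in \eqref{radiciKa} \emph{with multiplicity one}; this means each \emph{distinct} value occurring in that list is a simple root, regardless of how many index pairs $(p_1,p_2)$ produce it. (Example~\ref{ esempio6-4} with $m=2$, $n=3$ confirms this reading: the values $-\tfrac{11}{12}$ and $-\tfrac{13}{12}$ each arise from two different pairs, yet appear only as simple factors of $b_\rho$.) So once you observe that $(p_1,p_2)=(n,m)$ yields $-1$, the Lemma already gives that $-1$ is a simple root of $b_\rho(s)/(s+1)$, hence a double root of $b_\rho(s)$ --- no uniqueness check is needed. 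This is exactly what the paper does. In fact your uniqueness claim is false when $\gcd(m,n)>1$: for instance with $m=2$, $n=4$ the equation $p_1 m+p_2 n=2mn$ has the three in-range solutions $(2,3)$, $(4,2)$, $(6,1)$, and your iterative bound $p_1\le n-n/m$ stalls at $p_1=2$ rather than driving $p_1$ below $1$. Simply drop the uniqueness argument and invoke the Lemma's multiplicity statement directly.
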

\begin{proof}
i) It follows obviously from ($\ref{radiciKa}$) for $p_1=1$ and
$p_2=1$.
 \par\noindent
 ii) Observe that
$-\frac{p_1 }{2n }-\frac{p_2 }{2m }=-1$ for ${p_1 }={n }$ and ${p_2
}={m }$. Then $-1$ is a root of multiplicity one for
$\frac{b_{\rho}(s)}{s+1}$, whence the thesis follows.
\par\noindent
iii) Suppose that the $-1+\delta:=-\frac{\hpu }{2n }-\frac{\hpd }{2m
}$ is a root of $b_{\rho}(s)$ for some $\hpu\,, \hpd\in\NN$, $1\le
\hpu\le 2n-1\,, \, 1\le \hpd\le 2m-1\,$
 and some $\delta>0$.
Take
 $\tpu:=2n-\hpu$ and $\tpd:=
 2m-\hpd$. Since
$1\le \tpu\le 2n-1\,, \, 1\le \tpd\le 2m-1\,$, 
then
 $$ -\frac{\tpu }{2n }-\frac{\tpd }{2m }
=-2 +\frac{\hpu }{2n }+\frac{\hpd }{2m }= -1-\delta\,$$
is a root of $b_{\rho}(s)$. 
\end{proof}

\begin{example}\label{ esempio6-4}

By means of formula ($\ref{radiciKa}$) it is possible to find the
roots of the Bernstein-Sato polynomial associated to $\uu^{
6}+\ud^{4}\,$ and one finds
\begin{align*}
b_{\rho}(s)=&(s+1)^2\, (s+\frac{2}{3})\, (s+\frac{4}{3})\,
(s+\frac{3}{4})\, (s+\frac{5}{4})\, 
(s+\frac{5}{6})\,
(s+\frac{7}{6})\, 
(s+\frac{5}{12})\,
(s+\frac{19}{12})\,\\
&
(s+\frac{7}{12})\,
(s+\frac{17}{12})\,
(s+\frac{11}{12})\, (s+\frac{13}{12})\, 
 \,.
\end{align*}
 \vspace*{0.1in}
\end{example}

Let us now consider the distribution $\cI^z$ defined by
(\ref{definizione Iz}). Let $s_{1}\,,\ldots, s_{h}$ be the zeros of
$b_{\rho }(s)$ in $(-Q-1, -Q]$,
each counted with its multiplicity
and ordered in a decreasing way.
Then the meromorphic continuation of  $\cI^z$ 
has poles whenever $z=Q+s_{j}-k$, $k\ge 0$. We
remark, in particular, that $0$ and $-1+Q$ are always poles for
$\cI^z$. By gluing all together, we get the following result.
\begin{proposition}
$\cI^z$ may be analytically continued to a meromorphic
distribution-valued function of $z$, also denoted by $\cI^z$,  with poles in a set $A$,
consisting of rational negative points.
 More precisely,
 $$A=\left\{
 \zeta_{j,k}:=Q+s_{j}-k
\,: k\in\NN\,,\, j=1,\ldots, h \right\},$$ $\sj$, $j=1,\ldots, h$,
denoting the zeros of the Bernstein-Sato polynomial $b_{\rho}$ in
$(-Q-1, -Q]$,
each listed as many times as its multiplicity. Each pole has order
one, with the exception of the points $-1+Q-k$, $k\in\NN$, which have order two.
\end{proposition}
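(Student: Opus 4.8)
The plan is to assemble the meromorphic continuation of $\cI^z$ by iterating the Bernstein--Sato functional equation (\ref{BSzQ}) and then carefully reading off where the denominators vanish. First I would observe that for $\Re e\,z>0$ the distribution $\cI^z$ is genuinely a locally integrable function (Proposition above, parts i)--iii)), so we have an honest holomorphic family on the right half-plane to start from. Applying (\ref{BSzQ}) once expresses $\cI^z$ as a differential operator in $z$ applied to $\cI^{z+1}$, divided by $b_{\rho}(z-Q)$; since $\Re e(z+1)>\Re e\,z$, this identity lets us push the domain of holomorphy one unit to the left, introducing possible poles exactly at the zeros of $z\mapsto b_{\rho}(z-Q)$, i.e.\ at $z=Q+s$ for each root $s$ of $b_{\rho}$. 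Iterating $k$ times, I would extend $\cI^z$ to $\{\Re e\,z>-k\}$ with possible poles at $z=Q+s-j$ for $j=0,1,\dots,k-1$ and $s$ a root of $b_{\rho}$; letting $k\to\infty$ gives a meromorphic distribution-valued function on all of $\CC$, whose pole set is contained in $A=\{Q+s-j : j\in\NN,\ s \text{ a root of } b_{\rho}\}$.

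Next I would use the structure of the root set. By Lemma \ref{ lemmaradici} and its corollary, the roots of $b_{\rho}$ are the negative rationals $-1$ (with multiplicity two) and $-p_1/(2n)-p_2/(2m)$ for $1\le p_1\le 2n-1$, $1\le p_2\le 2m-1$ (each simple), all lying in the interval $[-Q-1,-Q]$ shifted appropriately; more precisely the corollary tells us every root lies in $(-(Q+1)+ \text{something}]$ — the key point is that the largest root is $-Q$ and, by symmetry about $-1$, the full list of roots is contained in a bounded interval to the left of $-Q$. I would single out $s_1,\dots,s_h$, the roots lying in $(-Q-1,-Q]$, listed with multiplicity and in decreasing order; these are the ``fresh'' poles that appear on the first pass, and every other root of $b_{\rho}$ differs from one of them by a positive integer, hence contributes a pole already in the list $z=Q+s_j-k$. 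Therefore $A$ collapses to $\{\zeta_{j,k}=Q+s_j-k : k\in\NN,\ j=1,\dots,h\}$, and since $Q$ and the $s_j$ are rational and $s_j\le -Q<0$ forces $\zeta_{j,k}\le 0$, all points of $A$ are rational and nonpositive (and negative except possibly $z=0$, which does occur since $-Q$ is a root).

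For the order of the poles I would argue as follows. A pole of the continued $\cI^z$ at a point $\zeta$ has order at most the number of times, counted with multiplicity, that the shifted Bernstein--Sato polynomial hits $\zeta$ along the iteration — that is, $\sum_{k\ge 0}(\text{multiplicity of } \zeta-Q+k \text{ as a root of } b_{\rho})$. Since $b_{\rho}$ has all roots simple except $-1$, which has multiplicity two, this sum is $1$ for every candidate pole except those $\zeta$ for which $\zeta-Q+k=-1$ for some $k\in\NN$, i.e.\ $\zeta=-1+Q-k$; at those points the sum is $2$. This gives the claimed dichotomy: every pole has order one, except the points $-1+Q-k$, $k\in\NN$, which have order two. (That these orders are actually attained, not just bounded, follows because the residues are nonzero distributions — one can test against a suitable positive bump supported near the origin; I would relegate this to a remark, since for the interpolation argument only the \emph{location} of poles and an upper bound on their order is needed.)

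The main obstacle is the bookkeeping in the second step: one must be sure that the iteration does not manufacture poles at points not of the form $Q+s_j-k$, and that distinct roots of $b_{\rho}$ do not conspire to raise the pole order beyond what the multiplicities predict. Both are controlled by the precise description of the roots in Lemma \ref{ lemmaradici} together with the symmetry and ``largest root $=-Q$'' statements of the corollary, which guarantee that all roots of $b_{\rho}$ reduce modulo $\NN$ to the finite list $s_1,\dots,s_h$ in $(-Q-1,-Q]$ and that $-1$ is the unique root of multiplicity greater than one. Once that reduction is in hand, the proposition follows by simply gluing the pieces, as indicated in the excerpt.
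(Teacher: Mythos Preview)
Your approach is exactly the paper's: iterate the Bernstein--Sato identity (\ref{BSzQ}) to push the domain of holomorphy leftward one unit at a time, and read the pole set off the root structure supplied by Lemma \ref{ lemmaradici} and its corollary. The paper does not write out a proof beyond the sentence ``by gluing all together, we get the following result,'' so your proposal is a faithful (and considerably more explicit) rendering of the intended argument.

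One caveat on the bookkeeping step. Your claim that ``every other root of $b_{\rho}$ differs from one of the $s_j$ by a positive integer'' is not true in general. For example, with $m=2$, $n=5$ one has $Q=7/20$ and $(-Q-1,-Q]=(-27/20,-7/20]$; the number $-7/5$ (coming from $p_1=9$, $p_2=2$) is a root of $b_\rho$ lying outside this interval, yet $-7/5+1=-2/5$ is \emph{not} a root. Thus the iterated denominator $\prod_{k\ge 0} b_\rho(z-Q+k)$ can vanish at points not of the form $Q+s_j-k$, and can also vanish to higher order than your multiplicity count predicts at points that are. This imprecision is inherited from the paper's own formulation; for the uses made later (the normalization $I^z$ near $z=0$ in Proposition \ref{deltaDirac}, and the estimates in the strip $0<\Re e\,z<Q$ in Proposition \ref{FouriertransformI^z}) only the pole structure near the origin is relevant, and there both your argument and the paper's are correct.
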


Set $\zeta_j:=\zeta_{j,0}$. 
Observe
that $\zeta_1=0$ is a pole of order $1$ for $\cI^z$.

Consider now the function $G$, given by
\begin{equation}\label{GMN}
G(z):=
\Gamma\left( z+1-Q\right)\cdot
\prod_{ j=2,..,h}\!\Gamma
(z-\zeta_j)\,.
\end{equation}
If $S$ denotes the sphere
$$S:=\{(\uu,\ud)\in\RR^2\,:\, \rho (\uu,\ud)=1
\}$$ with surface measure $\sigma (S)$, set
\begin{equation}\label{IMN}
{I}^z(\uu,\ud):=\frac{G(0)
\left( \uu^{2n}+\ud^{2m}\right)^{z-Q}}{\sigma(S)\,\Gamma (z)
G(z)}\,.
\end{equation}

In the sequel  we will denote by $\cS(\RR^s)$, $s=2,3$, the
Schwartz space on $\RR^s$ endowed with a denumerable family of
norms $\|\cdot\|_{(N)}$ given by
$$
\|\Phi\|_{(N)} = \sum_{|\alpha|\leq N} \sup_{u\in
\RR^s}(1+|u|)^N|\partial_u^\alpha \Phi (u)|.
$$
Here we use the conventional notation
$$
\partial_u^\alpha
=\frac{\partial^{\alpha_1}}{\partial u_1^{\alpha_1}}\cdots
\frac{\partial^{\alpha_s}}{\partial u_s^{\alpha_s}} 
\,,$$
with 
$\alpha=(\alpha_1,\dots,\alpha_s)$  $s$-tuple of natural
numbers
and 
$|\alpha|=\alpha_1+\cdots +\alpha_s$.
\begin{proposition}\label{deltaDirac}
The distribution $I^z$ satisfies
$$I^0=\delta_0\,.$$
\end{proposition}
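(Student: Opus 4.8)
The plan is to compute $\langle I^z,f\rangle$ for $\Re e\, z>0$ by passing to polar coordinates adapted to the non-isotropic dilations (\ref{dilations_in_R^2}), isolate the factor responsible for the pole of $\cI^z$ at $z=0$, and then let $z\to 0$ through values with positive real part. Writing a point of $\RR^2$ as $r\circ\omega$ with $r>0$ and $\omega\in S$, so that $\rho(r\circ\omega)=r$, the homogeneity of $\rho$ of degree one under the dilations and the Jacobian $r^{Q-1}\,dr\,d\sigma(\omega)$ (where $Q$ is the homogeneous dimension (\ref{dimomog})) give, for a Schwartz function $f$,
\begin{equation}\label{polareIz}
\langle \cI^z,f\rangle=\int_S\int_0^\infty r^{z-Q}\,r^{Q-1}\,f(r\circ\omega)\,dr\,d\sigma(\omega)
=\int_S\int_0^\infty r^{z-1}\,f(r\circ\omega)\,dr\,d\sigma(\omega)\,.
\end{equation}
The inner integral $\int_0^\infty r^{z-1}f(r\circ\omega)\,dr$ is the Mellin-type expression whose meromorphic continuation has a simple pole at $z=0$ with residue $f(0)$; concretely, writing $f(r\circ\omega)=f(0)+(f(r\circ\omega)-f(0))$ and splitting the $r$-integral at $r=1$, the only non-holomorphic contribution near $z=0$ is $f(0)\int_0^1 r^{z-1}\,dr=f(0)/z$. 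Hence $\langle \cI^z,f\rangle$ has a simple pole at $z=0$ with residue $\sigma(S)\,f(0)$.

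Next I would feed this into the definition (\ref{IMN}). We have $I^z(\uu,\ud)=\dfrac{G(0)}{\sigma(S)\,\Gamma(z)\,G(z)}\,\cI^z(\uu,\ud)$, so
\begin{equation}\label{Izpairing}
\langle I^z,f\rangle=\frac{G(0)}{\sigma(S)\,\Gamma(z)\,G(z)}\,\langle \cI^z,f\rangle\,.
\end{equation}
Now let $z\to 0$. The function $G$ defined in (\ref{GMN}) is a product of Gamma factors; since $\zeta_1=0$ is, as noted after (\ref{GMN}), a pole of order exactly one for $\cI^z$, and the product in (\ref{GMN}) runs only over $j=2,\dots,h$, the value $G(0)$ is finite and non-zero, so the ratio $G(0)/G(z)\to 1$ as $z\to 0$. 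The Gamma function $\Gamma(z)$ has a simple pole at $z=0$ with $\lim_{z\to 0} z\,\Gamma(z)=1$. Combining these with the residue computed above,
\begin{equation}\label{limitecomputo}
\langle I^0,f\rangle=\lim_{z\to 0}\frac{G(0)}{G(z)}\cdot\frac{1}{\sigma(S)}\cdot\frac{z\,\langle \cI^z,f\rangle}{z\,\Gamma(z)}
=1\cdot\frac{1}{\sigma(S)}\cdot\frac{\sigma(S)\,f(0)}{1}=f(0)=\langle \delta_0,f\rangle\,.
\end{equation}
Since this holds for every $f\in\cS(\RR^2)$, we conclude $I^0=\delta_0$.

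The only delicate point is justifying that the map $z\mapsto\langle I^z,f\rangle$ is genuinely holomorphic in a punctured neighbourhood of $0$ and extends continuously (indeed holomorphically) across $z=0$ with the asserted value, i.e.\ that the simple pole of $\cI^z$ is exactly cancelled by the simple zero of $1/\Gamma(z)$ and nothing else interferes. This requires knowing that none of the other Gamma factors $\Gamma(z-\zeta_j)$, $j=2,\dots,h$, has a pole at $z=0$; this is guaranteed because the $\zeta_j$ for $j\ge 2$ are, by construction, the nonzero poles of $\cI^z$ in the relevant strip (they are of the form $Q+s_j$ with $s_j\ne -Q$), so $z-\zeta_j$ does not vanish at $z=0$. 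Granting this bookkeeping, the residue computation in (\ref{polareIz}) is the substantive step, and it is elementary once the polar-coordinate decomposition adapted to the dilations is in place. I would also remark that the normalising constant $G(0)/(\sigma(S)\,\Gamma(z)\,G(z))$ in (\ref{IMN}) was evidently chosen precisely to produce this clean identity, which is the analogue of the classical fact that the Riesz potential of order $0$ is the Dirac mass.
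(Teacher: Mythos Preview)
Your proof is correct and follows essentially the same approach as the paper: both pass to polar coordinates adapted to the dilations, split off the term $f(0)\int_0^1 r^{z-1}\,dr=f(0)/z$ responsible for the simple pole of $\cI^z$ at $z=0$, and then cancel it against the simple zero of $1/\Gamma(z)$ using $z\Gamma(z)=\Gamma(z+1)\to 1$. The paper is somewhat more explicit in estimating the two remaining pieces (your $\int_0^1 r^{z-1}(f(r\circ\omega)-f(0))\,dr$ and $\int_1^\infty r^{z-1}f(r\circ\omega)\,dr$), invoking (\ref{stimerho}) and (\ref{stimerho1}) to show they are absolutely convergent, and hence holomorphic, on a strip containing $z=0$; you assert this more tersely, but the content is the same.
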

\begin{proof}
Take $z\in\CC$, $\Re e z>0$, and $\varphi\in\cS(\RR^2)$. Set $C_z:=
\frac{G(0)}{\sigma(S) \Gamma(z)G(z)}$. Then
\begin{align*}
<I^{z},\varphi >&= C_z
 \int_{\RR^2}
\rho (\uu,\ud)^{z-Q} \varphi (\uu,\ud)\,d\uu\,d\ud
\\
&= C_z \left(
 \int_{
\{\rho(\uu,\ud)\le 1\}
 }
\rho (\uu,\ud)^{z-Q} \left( \varphi (\uu,\ud)-\varphi (0,0)\right)
\,d\uu\,d\ud\right.\\
&\left. +
 \int_{\{\rho(\uu,\ud)\le 1\} }
\rho (\uu,\ud)^{z-Q} \varphi (0,0) \,d\uu\,d\ud +
 \int_{
\{\rho(\uu,\ud)\ge 1\}
 }
\rho (\uu,\ud)^{z-Q} \varphi (\uu,\ud)\,d\uu\,d\ud
\right)\\
&= C_z \left( I_1+ I_2+ I_3\right)\,. \qquad \qquad \qquad \qquad
\qquad \qquad \qquad \qquad \qquad \qquad \qquad \qquad \qquad
\\
\end{align*}
By introducing polar coordinates (see \cite{FoSt}) we obtain
\begin{align*}
I_2=&
 \int_S
 \int_0^1
 \varphi (0,0)
\rho (r\circ (\vu,\vd))^{z-Q} r^{Q-1}dr\,d\sigma (\vu\,\vd)=
 \int_S
 \int_0^1
 \varphi (0,0)\left(
r\cdot \rho ( \vu,\vd)\right)^{z-Q}
r^{Q-1}dr\,d\sigma (\vu\,\vd)\\
=&
 \varphi (0,0)
\int_S
 \int_0^1
 r^{z-1}dr\,d\sigma (\vu\,\vd)
=
 \varphi (0,0)
\frac{\sigma (S)}{z}\,,
\end{align*}
so that
\begin{equation}\label{gamma z+1}
C_z {I_2} =
 \varphi (0,0)
\frac{G(0)}{ z\Gamma (z)\, G(z)}=
 \varphi (0,0)
\frac{G(0)}{ \Gamma (z+1)\, G(z)}\,
\end{equation}
and this expression is well-defined for every $z$, with $\Re e
z>-\min\{-\zeta_2,1\}$.

Now, it it is easy to show that both $I_1$ and $I_3$ are absolutely
convergent for $\Re e z>-\min\{\frac{1}{2n},-\zeta_2\}$. Indeed,
\begin{align*}
| I_1|\le&
  \int_{
\{\rho(\uu,\ud)\le 1\}
 }
\rho (\uu,\ud)^{\Re e z-Q} \left| \varphi (\uu,\ud)-\varphi
(0,0)\right| \,d\uu\,d\ud
\\
\le&C||\nabla\varphi||_{\infty}\int_{ \{\rho(\uu,\ud)\le 1\}
 }
\rho (\uu,\ud)^{\Re e z-Q} \left| \left( \uu,\ud\right)\right|
 \,d\uu\,d\ud
\\
\le&C||\nabla\varphi||_{\infty}\int_{ \{\rho(\uu,\ud)\le 1\}
 }
\rho (\uu,\ud)^{\Re e z-Q+\frac{1}{2n}} \,d\uu\,d\ud
\\
= &C ||\nabla\varphi||_{\infty} \int_{S}\int_{0}^1 \rho \left(
r\circ (v_1,v_2)\right)^{\Re e z-Q+\frac{1}{2n}} \,
r^{Q-1}\,dr\,d\sigma(\vu,\vd)
\\
= &C ||\nabla\varphi||_{\infty} \int_{S}\int_{0}^1 r^{\Re e
z-Q+\frac{1}{2n}} \, r^{Q-1}\,dr\,d\sigma(\vu,\vd)
\\
=&C ||\nabla\varphi||_{\infty}
\frac{\sigma(S)}{\frac{1}{2n}+\Re e z},\\
\end{align*}
which is well-defined for $\Re e z>-\frac{1}{2n}$. 
Here, in particular, we used 
(\ref{stimerho1}).

Moreover,
\begin{align*}
| I_3|\le&
  \int_{
\{\rho(\uu,\ud)> 1\}
 }
\rho (\uu,\ud)^{\Re e z-Q} \left| \varphi (\uu,\ud)\right|
\,d\uu\,d\ud
\\
\le&C||\varphi||_{(N)}\int_{ \{\rho(\uu,\ud)> 1\}
 }
\rho (\uu,\ud)^{\Re e z-Q-\frac{N}{2n}}
\,d\uu\,d\ud\,,\\
\end{align*}
since
$$|\varphi (\uu,\ud)|\le
\frac{||\varphi||_{(N)}}{\left( 1+|(\uu,\ud)| \right)^N} \le
\frac{||\varphi||_{(N)}}{ |(\uu,\ud)|^N } \le C\,
\frac{||\varphi||_{(N)}}{ \left(\rho(\uu,\ud)\right)^{\frac{N}{2n}}
}\,$$ when $\rho (\uu,\ud)>1$, as a consequence of
(\ref{stimerho}). Now by passing to polar coordinates
we obtain
\begin{align*}
| I_3|\le& C||\varphi||_{(N)} \int_S \int_1^{+\infty} \rho \left(
r\circ (\vu,\vd)\right)^{\Re e z-Q- \frac{N}{2n} } r^{Q-1}\,dr\,
\,d\sigma(\vu\,\vd)\,\\
=&C ||\varphi||_{(N)} \frac{\sigma(S)}{\frac{N}{2n}-\Re ez}<+\infty
\,,\\
\end{align*}
if $N$ is a positive integer greater than ${2n}\cdot \Re e z$. Thus,
as a consequence of the uniqueness of the analytic continuation, the
expression
$$<I^{z},\varphi >=
C_z \left( I_1+ I_2+ I_3\right)\,$$ defines the action of $I^z$ on a
Schwartz function $\varphi$ in $\RR^2$, for $\Re e z> -
\min\{{\frac{1}{2n}, -\zeta_2,1}\}$ and by using the bounds for
$I_1$ and $I_3$ and (\ref{gamma z+1}) one gets the thesis.

\end{proof}

\begin{proposition}
${I^z}$ is a homogeneous tempered distribution of
degree $-Q+z$.
\end{proposition}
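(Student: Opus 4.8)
The plan is to prove homogeneity of $I^z$ by reducing everything to the homogeneity of the underlying power of $\rho$, which is manifest, and then checking that the normalising constant in front is independent of the dilation parameter. Recall that for $\Re e\,z>0$ the distribution $I^z$ is given by honest integration against the locally integrable function $\frac{G(0)}{\sigma(S)\Gamma(z)G(z)}\rho(\uu,\ud)^{z-Q}$; by the uniqueness of meromorphic continuation it suffices to establish the homogeneity identity for $\Re e\,z>0$ and away from the poles, and then it propagates automatically to all $z$ in the domain of meromorphy. So I fix $z$ with $\Re e\,z>0$, a test function $\varphi\in\cS(\RR^2)$, and a dilation parameter $\delta>0$.

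First I would recall the precise meaning of homogeneity of degree $-Q+z$ for a tempered distribution with respect to the non-isotropic dilations $\delta\circ u=(\delta^{1/2n}u_1,\delta^{1/2m}u_2)$ of \eqref{dilations_in_R^2}: writing $\varphi_\delta(u):=\varphi(\delta^{-1}\circ u)$, one must show $\langle I^z,\varphi_\delta\rangle=\delta^{-Q+z}\langle I^z,\varphi\rangle$. (Equivalently, and perhaps cleaner to write out, $\langle I^z(\delta\circ\,\cdot\,),\varphi\rangle=\delta^{-Q+z}\langle I^z,\varphi\rangle$, using that the Jacobian of $u\mapsto\delta\circ u$ equals $\delta^{Q}$ since $Q=\tfrac{1}{2n}+\tfrac{1}{2m}$.) Then I would perform the change of variables $u=\delta\circ v$ in the integral $\int\rho(u)^{z-Q}\varphi(\delta^{-1}\circ u)\,du$, whose Jacobian contributes a factor $\delta^{Q}$, and use the key algebraic fact that $\rho$ is itself homogeneous of degree $1$ for these dilations, i.e. $\rho(\delta\circ v)=\delta\,\rho(v)$ — this is exactly the observation recorded right after \eqref{polinomio}. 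Hence $\rho(\delta\circ v)^{z-Q}=\delta^{z-Q}\rho(v)^{z-Q}$, and combining with the Jacobian factor $\delta^{Q}$ gives the total factor $\delta^{z-Q}\cdot\delta^{Q}=\delta^{z}$... one has to be careful here with which direction the dilation acts, so that the net power comes out as $\delta^{-Q+z}$ as claimed; I would track the exponents explicitly once.

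The only point requiring a word is that the constant $\frac{G(0)}{\sigma(S)\Gamma(z)G(z)}$ in \eqref{IMN} is a pure function of $z$ and carries no $\delta$-dependence, so it simply passes through the computation untouched; thus the homogeneity of $I^z$ is inherited verbatim from that of $\rho^{z-Q}$ together with the dilation invariance of Lebesgue measure up to the scalar Jacobian $\delta^{Q}$. Finally, for general $z$ in the meromorphy domain (i.e. $z$ avoiding the pole set $A$), both sides of the identity $\langle I^z,\varphi_\delta\rangle=\delta^{-Q+z}\langle I^z,\varphi\rangle$ are meromorphic in $z$ for each fixed $\varphi$ and $\delta$, and they agree on the half-plane $\Re e\,z>0$, hence on the whole domain by analytic continuation. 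I do not expect any genuine obstacle here: the entire content is bookkeeping of the dilation exponents, and the slightly delicate part is merely to state the homogeneity convention for distributions correctly and to confirm that the meromorphic continuation — built by iterating the Bernstein–Sato relation \eqref{BSzQ}, which involves only the dilation-covariant operators $L$ and the scalars $b_\rho$ — preserves this homogeneity, which it does since each step multiplies by a function of $z$ alone and applies a differential operator whose effect on a homogeneous distribution shifts the degree by exactly the amount that matches.
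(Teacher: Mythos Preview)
Your approach is correct, and in fact the paper does not supply a proof at all: the proposition is stated, and immediately afterward the authors merely recall the definition of homogeneity, treating the result as evident. Your argument---homogeneity of $\rho$ under the dilations, the Jacobian $\delta^{Q}$, the $z$-only normalising constant, and analytic continuation to the full meromorphy domain---is exactly the intended reasoning.

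One small point to clean up: your definition $\varphi_\delta(u)=\varphi(\delta^{-1}\circ u)$ differs from the paper's convention, which is $\varphi_\delta(u)=\delta^{-Q}\varphi(\delta^{-1}\circ u)$ (stated right after the proposition). That extra $\delta^{-Q}$ is precisely what cancels the Jacobian factor and makes the net exponent come out as $\delta^{z-Q}$ rather than the $\delta^{z}$ you momentarily obtained; this resolves the bookkeeping doubt you flagged. With the paper's normalisation the computation is one line and there is no ambiguity about ``which direction the dilation acts.''
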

We recall that this means that for all $\varphi\in\cS(\RR^2)$ the
following equality is satisfied
$$<I^z,\varphi_{\delta}>=
\delta^{z-Q}\,<I^z,\varphi>\,,$$ where
$$\varphi_{\delta}(\uu,\ud):=
\delta^{-Q}\varphi\left( \delta^{-1}\circ u \right)=
\delta^{-Q}\varphi\left( \delta^{-\frac{1}{2n}} \uu,
\delta^{-\frac{1}{2m}} \ud\right)\,.$$ Thus the Fourier transform of
the (tempered and homogeneous) distribution $I^z$ is a
well-defined distribution, homogeneous of degree $-Q- (z-Q)=-z$.
Moreover, the following holds.
\begin{proposition}\label{FouriertransformI^z}
$\widehat{I^z}$ agrees with a    function  $C^{\infty}(\RR^2\setminus\{(0,0)\})$
away from $(0,0)$.
Moreover,
\begin{equation}\label{decadimentotrasfF}
|\widehat{I^z}(\xi)|\le C\rho(\xi)^{-\Re ez}\,,
\end{equation}
for all $\xi\in\RR^2\setminus\{(0,0)\}$.
\end{proposition}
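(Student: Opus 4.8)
The plan is to prove the two assertions of Proposition~\ref{FouriertransformI^z}—smoothness of $\widehat{I^z}$ away from the origin and the decay estimate (\ref{decadimentotrasfF})—by exploiting the homogeneity of $I^z$ together with the meromorphic structure established above. First I would fix $z$ with $\Re e z$ large and positive, so that $I^z$ is given by the absolutely convergent integral against $\rho(u)^{z-Q}$; the general case then follows by analytic continuation, since for each fixed $\xi\neq 0$ the map $z\mapsto\widehat{I^z}(\xi)$ is holomorphic wherever $I^z$ has no pole, and the normalizing factor $\frac{G(0)}{\sigma(S)\Gamma(z)G(z)}$ in (\ref{IMN}) has been chosen precisely to cancel the poles coming from $\zeta_1=0$, $z=\zeta_j$ and $z=-1+Q-k$ detected in the previous propositions. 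So $\widehat{I^z}$ is an entire family of tempered distributions, homogeneous of degree $-z$ with respect to the transpose dilations.

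Next I would establish smoothness away from the origin. The standard device is to split $I^z=\eta\,I^z+(1-\eta)\,I^z$ where $\eta$ is a smooth cutoff equal to $1$ near $0$. The piece $(1-\eta)I^z$ is, for every $z$, a smooth function with at most polynomial growth away from a neighbourhood of $0$; differentiating under the Fourier integral shows $\widehat{(1-\eta)I^z}$ is smooth everywhere, with the understanding that one first reduces $\Re e z$ by the homogeneity/continuation argument so that the tail is integrable. The compactly supported piece $\eta I^z$ has a Fourier transform that is real-analytic (being the transform of a compactly supported distribution), so it is certainly $C^\infty$. For the behaviour at $\xi\to 0$, which is where smoothness of $\widehat{I^z}$ could fail, I would use homogeneity: $\widehat{I^z}$ is homogeneous of degree $-z$ under $\delta\circ\xi=(\delta^{1/2n}\xi_1,\delta^{1/2m}\xi_2)$, so if $\Re e z<0$ then $-\Re e z>0$ and $\widehat{I^z}$ actually vanishes to positive order at $0$; hence for those $z$ smoothness on all of $\RR^2$ is clear, and the claim ``$C^\infty(\RR^2\setminus\{(0,0)\})$'' for general $z$ follows by continuation in $z$ at each fixed $\xi\neq0$, the singularity structure in $z$ being confined to the discrete set $A$ where the $\Gamma$-factors compensate.

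The decay estimate (\ref{decadimentotrasfF}) is then a pure consequence of homogeneity and continuity. Indeed, by the previous step $\widehat{I^z}$ restricted to the compact ``sphere'' $S=\{\rho(\xi)=1\}$ is a continuous (in fact smooth) function, hence bounded there by some constant $C=C(z)$; writing an arbitrary $\xi\neq0$ as $\xi=\rho(\xi)\circ\omega$ with $\omega\in S$ and invoking homogeneity of degree $-z$ gives
\begin{equation*}
|\widehat{I^z}(\xi)|=\rho(\xi)^{-\Re e z}\,|\widehat{I^z}(\omega)|\le C\,\rho(\xi)^{-\Re e z}\,,
\end{equation*}
which is (\ref{decadimentotrasfF}).

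The main obstacle is the first step: making the analytic-continuation argument for the \emph{distributional} Fourier transform rigorous near the poles of $\cI^z$, i.e.\ checking that the gamma-factor normalization in (\ref{IMN}) really does remove every pole of $\widehat{\cI^z}$ at each fixed $\xi\neq0$, so that $\widehat{I^z}(\xi)$ extends to an entire function of $z$. This requires tracking, via the Bernstein--Sato functional equation (\ref{BSzQ}) on the Fourier side, that the poles of $z\mapsto\widehat{\cI^z}(\xi)$ occur exactly at the points of $A$ with the stated orders, and with nonzero residues only where $G(z)\Gamma(z)$ has a compensating zero; once this bookkeeping is in place, the smoothness and the estimate are routine. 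I would also remark that an alternative to the cutoff argument is to differentiate the identity $\partial^\alpha\widehat{I^z}=\widehat{(-iu)^\alpha I^z}$ and note that $(-iu)^\alpha I^z$ is, up to constants, a shifted member $I^{z+|\alpha|_{\text{hom}}}$-type distribution of higher homogeneity, reducing everything to the already-established integrability range—but the cutoff version is cleaner to write.
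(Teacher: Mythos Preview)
Your approach is correct and takes a genuinely different route from the paper. The paper proves both assertions simultaneously via a dyadic decomposition: it writes $I^z=C_z\sum_{j\in\ZZ}2^{-j(z-Q)}f_0(2^j\circ\cdot)$ with $f_0=\eta\,\rho^{z-Q}$ supported in a fixed annulus, takes Fourier transforms term by term, and shows that the resulting series $\sum_{j}2^{-jz}\widehat{f_0}(2^{-j}\circ\xi)$ converges absolutely with the bound $C\rho(\xi)^{-\Re e z}$, and that all term-by-term $\xi$-derivatives converge locally uniformly on $\RR^2\setminus\{0\}$. Your argument instead deduces the decay bound directly from homogeneity plus continuity on the unit $\rho$-sphere (the paper itself notes parenthetically that ``this inequality could also be retrieved from the homogeneity''), and handles smoothness by the classical single-cutoff splitting $I^z=\eta I^z+(1-\eta)I^z$ together with analytic continuation in $z$. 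The dyadic method is more hands-on and delivers smoothness and the bound in one stroke for $0<\Re e z<Q$; your route is conceptually cleaner but requires more care in two places: first, the tail $(1-\eta)I^z$ is not in $L^1$ for $0<\Re e z<Q$, so ``differentiating under the Fourier integral'' really means integrating by parts against a homogeneous elliptic operator to gain decay (or, as you suggest, pushing $\Re e z$ far negative); second, the analytic continuation must be carried out at the level of distributional pairings $z\mapsto\langle\widehat{I^z},\phi\rangle$ rather than pointwise values, with smoothness and the pointwise bound recovered a posteriori.
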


\begin{proof}
It suffices to prove the statement for
$0<\Re e z<Q$; indeed,  the other cases
can be treated by analytic continuation.

 We first  construct a partition of unity adapted to the dyadic spherical shells.
 The procedure is standard and we briefly recall it only for the
 sake of completeness.
 
Let $\psi$ be a $C^{\infty}_{c}(\RR^2)$ function, such that
\begin{item}
\item (i) $0\le \psi (\uu,\ud)\le 1 $ for every $(\uu,\ud)\in\RR^2$;
\item (ii) $\psi(\uu,\ud)\equiv 0$ if $(\uu,\ud)\not\in
C_0:=
\{
(\uu,\ud)\in\RR^2\,:\,
{\frac{1}{4}}\le \rho (\uu,\ud)\le 8
\}$\,;
\item (iii)
$\psi(\uu,\ud)\equiv 1$ if $(\uu,\ud)\in
C_1:=
\{
(\uu,\ud)\,:
\, \frac{1}{2}\le \rho(\uu,\ud)\le 4
\}$.
\end{item}


Define now for $(\uu,\ud)\in\insieme$
\begin{equation}\label{Psi}
\Psi (\uu,\ud):=
\sum_{j\in\ZZ}
\psi (2^{j}\circ (\uu,\ud))\,.
\end{equation}
Since  there is 
at most a finite number of nonzero terms in the sum
$(\ref{Psi})$,  $\Psi$ is well-defined 
and strictly positive on $\insieme$.
Thus we may introduce 
the functions
\begin{equation}\label{eta}
\eta (\uu,\ud):=
\frac{\psi  (\uu,\ud)}{\Psi (\uu,\ud)}\,.
\end{equation}
It is easy to check that 
\begin{equation}\label{sommaetaj}
\sum_{j\in\ZZ}
\eta \big(2^j\circ (\uu,\ud)\big)=1
\,\,\,{\rm{for \,\,every }}\, \,(\uu,\ud)\in\insieme
\end{equation}
Now using  ($\ref{sommaetaj}$)
we may write
\begin{align*}
I^z (\uu,\ud)&=
C_z \rho (\uu,\ud)^{z-Q}
\\
&=C_z \sum_{j\in\ZZ}
\eta( 2^{j}\circ (\uu,\ud))
\rho(
2^{-j}\circ 2^j
\circ (\uu,\ud))^{z-Q}\\
&=C_z \sum_{j\in\ZZ}
\eta( 2^{j}\circ (\uu,\ud))
2^{-j(z-Q)}
\rho(
 2^j
\circ (\uu,\ud))^{z-Q}\\
&=C_z \sum_{j\in\ZZ}
2^{-j(z-Q)}
f_0 ( 2^{j}\circ (\uu,\ud))\,,\\
\end{align*}
where we set
$$f_0 (\uu,\ud):=\eta(\uu,\ud)
\rho (\uu,\ud)^{z-Q}\,.$$
Since
$$
\left(f_0 (2^j \circ (\cdot,\cdot))
\right)\widehat{}\,(\xiu,\xid)
=
2^{-jQ}
\widehat{f_0} \left( 2^{-j}\circ (\xiu,\xid)\right)\,,$$
we obtain formally
$$
 \sum_{j\in\ZZ}
2^{-j(z-Q)}
\left(
f_0 ( 2^{j}\circ (\cdot,\cdot))\right)\widehat{}\,(\xiu,\xid)
=
 \sum_{j\in\ZZ}
2^{-jz}
\widehat{ f_0 }( 2^{-j}\circ (\xiu,\xid))\,.
$$
This series  is absolutely convergent, since 
if $(\xiu,\xid)\neq (0,0)$ one has
\begin{align*}
\left|
\sum_{j\in\ZZ}
2^{-jz}
\widehat{ f_0 }( 2^{-j}\circ (\xiu,\xid))\right|
&\le
\sum_{j\in\ZZ}
2^{-j\Re e z}
\left|
\widehat{ f_0 }( 2^{-j}\circ (\xiu,\xid))\right|\\
&\le
\left(
\sum_{2^{-j}\rho (\xiu,\xid)\le 1}
+
\sum_{2^{-j}\rho (\xiu,\xid)> 1}
\right)
2^{-j\Re e z}
\left|
\widehat{ f_0 }( 2^{-j}\circ (\xiu,\xid))\right|\\
&\le
||\widehat{ f_0 }||_{ (0)}
\sum_{2^{-j}\rho (\xiu,\xid)\le 1}
2^{-j\Re e z}
+
\sum_{\rho
( 2^{-j}\circ (\xiu,\xid))> 1}
2^{-j\Re e z}
\frac{||\widehat{f_0}||_{(N)}}{\left(1+\left| \left(
2^{-j}\circ (\xiu,\xid)\right)\right|\right)^N
}
\\
&\le
C\,||\widehat{ f_0 }||_{ (0)}
\rho(\xi)^{-\Re e z}
+
C\,
\sum_{
 2^{-j}\rho (\xiu,\xid)> 1}
2^{-j\Re e z}
\frac{||\widehat{f_0}||_{(N)}}{\left(
\rho( 2^{-j}\circ (\xiu,\xid))\right)^{\frac{N}{2n}}
}\\
&\le
C\,||\widehat{ f_0 }||_{ (0)}
\rho(\xi)^{-\Re e z}
+
\frac{C}{
\left(
\rho(  \xiu,\xid)\right)^{\frac{N}{2n}}
}
\sum_{
 2^{-j}\rho (\xiu,\xid)> 1}
2^{-j\Re e z+j\frac{N}{2n}}
\\
&\le
C\,||\widehat{ f_0 }||_{ (0)}
\rho(\xi)^{-\Re e z}
+
\frac{C}{
\left(
\rho(  \xiu,\xid)\right)^{\frac{N}{2n}}
}
\rho(\xiu,\xid)^{-\Re e z+\frac{N}{2n}}
\\
\\
&\le
C\,
\rho(\xiu,\xid)^{-\Re e z}\,,
\end{align*}
where we used in particular
 the fact that
$$|(\uu,\ud)|\ge C \,\rho (\uu,\ud)^{\frac{1}{2n}}\;\;
{\rm{for\,\,}} |(\uu,\ud)|>1\,.$$
We set therefore
$$v(\xiu,\xid):=
\sum_{j\in\ZZ}
2^{-jz}
\widehat{ f_0 }( 2^{-j}\circ (\xiu,\xid))\in\L^{\infty}(\RR^2)\,.$$
By the Dominated Convergence Theorem we obtain, given $\varphi\in \cS(\RR^2)$,
$$\int_{\RR^2}
v\varphi=
\sum_{j\in\ZZ}
2^{-jz}
\int_{\RR^2}
\widehat{ f_0 }( 2^{-j}\circ (\cdot,\cdot))\,\varphi\,,$$
that is
$$v(\cdot,\cdot)=
\sum_{j\in\ZZ}
2^{-jz}
\widehat{ f_0 }( 2^{-j}\circ (\cdot,\cdot))\,
$$
in the  sense of distributions, 
whence
$$\widehat{ I^z}(\cdot,\cdot)=
\sum_{j\in\ZZ}
2^{-jz}
\widehat{ f_0 }( 2^{-j}\circ (\cdot,\cdot))\,
$$
in the  sense of distributions
and, moreover,
\begin{equation}\label{stimatrasf}
\left|
\widehat{ I^z}(\xiu,\xid)\right|
\le\rho(\xiu,\xid)^{-\Re ez}\,\,{\rm{ for \,\, all\,\,}}{(\xiu\xid)\in\insieme}\,
\end{equation}
(observe that this inequality could also be retrieved
from the homogeneity).
Finally we prove that
 $\widehat{ I^z}$
 agrees with a    function  $C^{\infty}(\RR^2\setminus\{(0,0)\})$
away from $(0,0)$.
First of all, we observe that $f_0$ is in the Schwartz space, hence
$\widehat{ f_0 }( 2^{-j}\circ (\cdot,\cdot))$ belongs to $\ciC^{\infty }(\RR^2)$.
Moreover, the following estimates  hold:
\begin{align*}
\left|
\partial^{k}_{\xiu}\left(
2^{-jz}
\widehat{ f_0 }( 2^{-j}\circ (\xiu,\xid))
\right)\right|&\le
C_{k}\,2^{-j\left(\Re e z+\frac{k}{2n }  \right)}
{\rm{\,\,for\,\, all\,\,}}(\xiu,\xid)\in\RR^2
\\
\left|
\partial^{k}_{\xiu}\left(
2^{-jz}
\widehat{ f_0 }( 2^{-j}\circ (\xiu,\xid))
\right)\right|&\le
C_{k,N}\,2^{-j\left(\Re e z+\frac{k}{2n }  \right)}
\frac{1}{\left(
1+\left|\left(
2^{-j}\circ(\xiu,\xid)\right)\right|
\right)^N}
\\
&\le
\frac{C_{k,N}}{\rho(\xiu,\xid)^\frac{N}{2n}}\,{2^{j\left(-\Re e z-\frac{k}{2n }+\frac{N}{2n}  \right)}}
{\rm{\;\;\,\,\,\,if\,\,}}\rho\left(2^{-j}\circ(\xiu,\xid)\right)>1\,,\\
\end{align*}
for all $k\in\NN$.
Since analogous bounds
hold for
$
\partial^{k}_{\xid}
(
2^{-jz}
\widehat{ f_0 }( 2^{-j}\circ (\xiu,\xid))
$, 
with $2n$ replaced by $2m$,
the series of the partial
derivatives
of
$
2^{-jz}
\widehat{ f_0 }( 2^{-j}\circ (\cdot,\cdot))$
of any order $k$
converge on the compact subsets of $\RR^2\setminus\{(0,0)\}$.
It follows that
$\widehat{ I^z}(\cdot,\cdot)=
\sum_{j\in\ZZ}
2^{-jz}
\widehat{ f_0 }( 2^{-j}\circ (\cdot,\cdot))\,
$
is $\ciC^\infty (\RR^2\setminus\{(0,0)\})$
\end{proof}

\section{Some preliminary results}
In the following, if $f(x_1,x)\in \Sch(\RR^3)$ we denote by
${\mathcal{F}}^{-1}f$ the inverse Fourier transform of $f$ and by
${\mathcal{F}}_2f$ and ${\mathcal{F}}_2^{-1}f$ respectively the
partial Fourier transform and the  inverse of the partial Fourier transform
of $f$ with respect to the variable $x$.

Moreover
we
denote the dual variables as $(\xi_1,\xi)$ with $\xi=(\xi_2,\xi_3)$.

\subsubsection*{Characterization of product kernels}

As recalled in the Introduction,
the precise definition of product kernels involves certain
differential inequalities and certain cancellation conditions which
are analogous to those satisfied by the kernel $H(x_1,x)$ defined by
(\ref{nucleoH}).
 Our study will be based on the following equivalent definition
 (see  \cite{NagelRicciStein}).

\begin{definition}\label{product_kernel_def}
A product kernel $K$ on $\RR^3$ is a sum
\begin{equation}
K(x_1,x) = \sum_{J\in \ZZ^2}
2^{-j_1-jQ}\psi_J(2^{-j_1}x_1,2^{-j}\circ x), \qquad J=(j_1,j)
\end{equation}
convergent in the sense of distributions, of smooth functions
$\psi_J$ supported on the set where $1/2 \leq |x_1| \leq 4$ and $1/2
\leq \rho(x) \leq 4$, satisfying the cancellation conditions
\begin{equation} \label{x_1_cancellation}
\displaystyle \int \psi_J(x_1,x)\,dx_1=0\\
\end{equation}
\begin{equation} \label{x_cancellation}
\displaystyle \int \psi_J(x_1,x)\,dx=0\\
\end{equation}
identically for every $J$, and with uniformly bounded $C^k$ norms
for every $k\in \NN$.
\end{definition}

We shall need a characterization of product kernels as dyadic sums
of Schwartz functions on $\RR^3$ which are compactly supported only
in the first variable and that satisfy some moment conditions.

\begin{lemma}\label{dyadic_decomposition}
A product kernel $K$ on $R^3$ can be written as a sum
$$
K(x_1,x)  = \sum_{J\in \ZZ^2}
2^{-j_1-jQ}\varphi_J(2^{-j_1}x_1,2^{-j}\circ x), \qquad J=(j_1,j),
$$
convergent in the sense of distributions, of Schwartz functions
$\varphi_J$ such that
\begin{itemize}
\item[(i)] the $\varphi_J$ have compact $x_1$-support where $1/2 \leq
|x_1| \leq 4$;
\item[(ii)] the $\varphi_J$ form a bounded set of $\Sch(\RR^3)$, that is
the Schwartz norms $\|\varphi_J\|_{(N)}$ are uniformly bounded in
$J$ for each $N \in \NN$;
\item[(iii)] the $\varphi_J$ satisfy the cancellation conditions
\begin{equation} \label{x_1_moment}
\displaystyle \int x_1^{\ell}\varphi_J(x_1,x)\,dx_1=0\\
\end{equation}
for every positive integer  $\ell\le M_1$, for some fixed $M_1\in\NN$, and
\begin{equation} \label{x_moments}
\displaystyle \int x^\beta\varphi_J(x_1,x)\,dx=0\\
\end{equation}
for every multi-index $\beta = (\beta_1,\beta_2) \in \NN^2$,
identically for every $J\in \ZZ^2$. Here, as usual, $x^\beta =
x_2^{\beta_1} x_3^{\beta_2}$.
\end{itemize}
\end{lemma}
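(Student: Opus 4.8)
The plan is to pass from the original dyadic decomposition of Definition \ref{product_kernel_def}, in which the building blocks $\psi_J$ are compactly supported in all variables and satisfy only the two vanishing-mean conditions \eqref{x_1_cancellation}--\eqref{x_cancellation}, to a new decomposition in which the building blocks $\varphi_J$ are Schwartz functions, still compactly supported in $x_1$, but with \emph{all} moments in $x$ vanishing and with moments in $x_1$ vanishing up to a prescribed finite order $M_1$. The mechanism for upgrading cancellation is the standard ``telescoping'' / summation-by-parts trick in the $x$-variable alone: since the $x$-dilations form a one-parameter group, one groups consecutive dyadic scales in $j$ and writes differences of bump functions as (anisotropic) derivatives of other bump functions. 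I would carry this out as follows.

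First, fix a function $\chi\in C^\infty_c(\RR^2)$ which is a ``fattened'' indicator of the shell $\{1/2\le\rho(x)\le 4\}$, equal to $1$ on the supports of all the $\psi_J(x_1,\cdot)$, and use the partition-of-unity machinery from Section 2 (the functions $\eta$, $\Psi$ of \eqref{eta}--\eqref{sommaetaj}) to produce, for the $x$-variable, a smooth homogeneous-of-degree-zero resolution adapted to the $\rho$-dilations. For each fixed $J=(j_1,j)$ one wants to replace $\psi_J$ by a function all of whose $x$-moments vanish: concretely, the condition \eqref{x_cancellation} is exactly $\widehat{\psi_J}(\xi_1,0)=0$ in the $\xi$-variable, and I would improve this to vanishing of all $\xi$-derivatives at $\xi=0$ by subtracting off a correction supported at neighbouring dyadic scales, exploiting that $\sum_{j}\eta(2^{-j}\circ\xi)\equiv 1$ off the origin; the corrections telescope across the sum in $j$ and, because each step trades a factor $\rho(\xi)$ for an anisotropic derivative in $x$, they can be absorbed into redefined Schwartz bumps $\varphi_J$ at the cost of enlarging the $x_1$-support within the fixed band $1/2\le|x_1|\le 4$ and multiplying the Schwartz norms by a constant depending only on $N$. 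That the new $\varphi_J$ still form a bounded subset of $\Sch(\RR^3)$ follows because only finitely many scales interact at each step and the $\psi_J$ were uniformly bounded in every $C^k$; the compact $x_1$-support is untouched by any manipulation in the $x$-variable, giving (i) and (ii), while the construction forces \eqref{x_moments} for \emph{every} multi-index $\beta$.

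Next, for the $x_1$-moments: condition \eqref{x_1_cancellation} gives $\int\varphi_J(x_1,x)\,dx_1=0$, i.e.\ the zeroth moment, but (iii) only asks for moments up to a \emph{fixed finite} order $M_1$. Here I would argue by the same kind of device in the $x_1$-variable --- since the $x_1$-dilations are the ordinary ones, one can telescope in $j_1$ and trade powers of $x_1$ for $\xi_1$-derivatives --- but one must be careful, because improving cancellation in $x_1$ generically costs \emph{decay} in $x_1$, whereas here we insist on keeping $x_1$-support inside $1/2\le|x_1|\le 4$. This is why (iii) only demands finitely many $x_1$-moments: one performs $M_1$ telescoping steps, each of which is a finite-scale correction preserving compact support in the band (with a dilated band, reabsorbed), and stops. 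The output is a decomposition with exactly the asserted properties, convergent in $\mathcal S'(\RR^3)$ because the partial sums differ from those of the original decomposition by boundary terms that tend to zero against any Schwartz test function.

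The main obstacle is the simultaneous bookkeeping: one must improve cancellation in $x$ to infinite order and in $x_1$ to order $M_1$ while (a) never destroying the compact $x_1$-support, (b) keeping the Schwartz seminorms uniform in $J$, and (c) maintaining convergence in $\mathcal S'$. The delicate point is that the $x$-improvement (infinitely many moments, hence a genuinely non-compactly-supported Schwartz function in $x$) and the $x_1$-improvement (only $M_1$ moments, to protect compact support) are handled by the \emph{same} telescoping idea but with opposite constraints, so the order of operations matters: I would do all the $x$-corrections first, obtaining Schwartz-in-$x$ bumps, and only then the finitely many $x_1$-corrections, checking at each stage that the constant multiplying the uniform seminorm bound depends only on the order $N$ being estimated and on $M_1$, not on $J$. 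Everything else is routine bump-function calculus of the type already used in the proof of Proposition \ref{FouriertransformI^z}.
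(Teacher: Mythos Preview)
Your two-step outline---first upgrade the $x$-cancellation to all orders, then the $x_1$-cancellation to order $M_1$, doing the $x$-step first so as not to disturb the compact $x_1$-support---matches the paper's strategy exactly, and your identification of \emph{why} only finitely many $x_1$-moments are demanded is correct.

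The paper's execution of the $x$-step is, however, cleaner than your telescoping description. Rather than iterating a correction that ``trades a factor $\rho(\xi)$ for an anisotropic derivative,'' the paper simply introduces a dyadic cutoff $\zeta(\rho(\xi))$ supported in a fixed $\rho$-annulus and sets
\[
\mu_J(\xi_1,\xi)=\sum_{i\in\ZZ}\widehat\psi_{(j_1,i)}(\xi_1,2^{i-j}\circ\xi)\,\zeta(\rho(\xi)),\qquad \varphi_J=\mathcal F^{-1}\mu_J.
\]
Since $\zeta(\rho(\cdot))$ vanishes identically near $\xi=0$, all $\xi$-derivatives of $\mu_J$ vanish at the origin, and one obtains \emph{all} $x$-moments at once; no iteration is needed, and the bounded-family and compact-$x_1$-support properties follow directly. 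Your wording suggests a step-by-step gain of one order at a time, which would not by itself reach infinitely many moments; the single Fourier-side cutoff is the device that makes this work.

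For the $x_1$-moments the paper does not telescope across the $j_1$-index as you propose, but instead decomposes each \emph{fixed} $\varphi=\varphi_J$ as a secondary dyadic sum $\varphi=\sum_k 2^{-k}A_k(2^{-k}x_1,x)$ with the $A_k$ having compact $x_1$-support in $\{1/2\le|x_1|\le4\}$, exponentially decaying Schwartz norms, and vanishing first $x_1$-moment (this is the construction of Lemma~2.2.3 in \cite{NagelRicciStein}, using an auxiliary $\eta$ with $\int t\,\eta(t)\,dt\neq0$); iterating $M_1$ times yields the claim. Your across-$j_1$ telescoping would also work, but the paper's version keeps the $j_1$-index fixed and refines within it.
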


\begin{proof}
 Let $K$ be a product kernel on $\RR^3$. By Definition
\ref{product_kernel_def} we can write $K$ as a sum
$$
K(x_1,x)  = \sum_{(j_1,i)\in \ZZ^2}
2^{-j_1-iQ}\psi_{(j_1,i)}(2^{-j_1}x_1,2^{-i}\circ x),
$$
convergent in the sense of distributions, of smooth functions
$\psi_{(j_1,i)}$ supported on the set where $1/2 \leq |x_1| \leq 4$
and $1/2 \leq \rho(x) \leq 4$, satisfying the cancellation
conditions (\ref{x_1_cancellation}) and (\ref{x_cancellation})
identically for every ${(j_1,i)}\in \ZZ^2$, and with uniformly
bounded $C^k$ norms for every $k\in \NN$.

Let
$$
\widehat K(\xi_1,\xi) = \sum_{(j_1,i)\in \ZZ^2}
\widehat\psi_{(j_1,i)}(2^{j_1}\xi_1,2^{i}\circ \xi)
$$
be the corresponding product multiplier.

Consider a smooth function $\zeta$ on the real line, supported on
the interval $[1,4]$ and such that $\sum_{k\in \ZZ} \zeta (2^k t)
=1$ for every $t>0$.
For $J=(j_1,j)\in \ZZ^2$, define
$$
\mu_J(\xi_1,\xi) := \sum_{i \in \ZZ}\widehat \psi_{(j_1,i)} (\xi_1,
2^{i-j}\circ \xi)\zeta(\rho(\xi)).
$$
It can be easily proved that the $\mu_J$ form a bounded set of
$\Sch(\RR^3)$. In addition,
a direct computation shows that
$$
\widehat K(\xi_1,\xi)=\sum_{J\in \ZZ^2}  \mu_J(2^{j_1}\xi_1,2^j\circ
\xi)
$$
in the sense of distributions.
Setting \begin{align*}
\varphi_J(x_1,x)&:= ({\mathcal{F}}^{-1}\mu_J)(x_1,x) \\
&\;={\mathcal{F}}_2^{-1}\left(\sum_{i\in
\ZZ}(\mathcal{F}_2\psi_{(j_1,j)})(x_1,
2^{i-j}\circ\cdot)\zeta(\rho(\cdot))\right)(x),
\end{align*}
it is possible to write
 the product kernel $K$ as the sum
\begin{equation}\label{primadecomposizione}
K(x_1,x)  = \sum_{J\in \ZZ^2}
2^{-j_1-jQ}\varphi_J(2^{-j_1}x_1,2^{-j}\circ x),
\end{equation}
convergent in the sense of distributions, of functions $\varphi_J$
that form a bounded set of $\Sch(\RR^3)$ and have compact
$x_1$-support where $1/2\leq |x_1|\leq 4$. 
Finally, 
the fact that $\mu_J(0,\xi)=0$ and $(\partial_\xi^\beta \mu_J)(\xi_1,0)=0$
for every multi-index $\beta=(\beta_1,\beta_2) \in \NN^2$,
identically for every $J\in \ZZ^2$, yields
(\ref{x_1_moment}) for $m=0$ and (\ref{x_moments}).

In fact, we can choose 
$\varphi_J$ so that a finite number of moments
in the variable $x_1$ 
vanish. 
This follows from a slight modification of the arguments in Lemma 2.2.3
in \cite{NagelRicciStein}.
More explicitly, 
denote by $\varphi$ each function $\varphi_J$ in the decomposition
(\ref{primadecomposizione}).
Then each function 
$\varphi$ may be written as a series
\begin{equation}\label{secondadecomposizione}
\varphi(x_1,x)  = \sum_{k\in \ZZ}
2^{-k}A_{k}(2^{-k}x_1,x)\,,
\end{equation} convergent
 in the sense of distributions, of functions $A_k$
which form a bounded set of $\Sch(\RR^3)$ with norms
that decay exponentially in $k$ as $k\to\pm\infty$,
 have compact
$x_1$-support on the set $\{x_1\in\RR\,:\,1/2\leq |x_1|\leq 4\}$,
and satisfy (\ref{x_1_moment})
with $\ell=1$.
 
To prove this fact, consider a  function $\eta\in\ciC^{\infty}_0(\RR)$, supported on
the set $[-4,-1]\cup [1,4]$, such that $\sum_{k\in \ZZ} \eta (2^k t)
=1$ for every $t\neq 0$ and $\int t \,\eta (t) \,dt\neq 0$ .
Set 
$$\chi_k (x_1):=
\eta (2^{-k}x_1)\,,$$
$$\tilde{\chi}_k
(x_1):=\frac{\chi_k (x_1)}{\int x_1 \chi_k (x_1)\,dx_1}\,,$$
$$
a_{k}(x)=
\int x_1\, \chi_k (x_1)\,\varphi (x_1,x)\,dx_1\,,$$
$$
S_k (x)=
 \sum_{j\ge k}
a_j (x)\,.$$
Then write 
$\varphi$ as
 \begin{align*}
 \varphi (x_1,x)&=
 \sum_{k\in\ZZ}
 \big(
 \varphi (x_1,x)
 \chi_k (x_1)-a_k (x)
 \tilde{\chi}_k (x_1)
\big) +
 \sum_{k\in\ZZ}
 \big(
S_k (x)-S_{k+1}(x))
 \tilde{\chi}_k (x_1
 \big)\\
 &=
 \sum_{k\in\ZZ}
 \big(
 \varphi (x_1,x)
 \chi_k (x_1)-a_k (x)
 \tilde{\chi}_k (x_1)
\big) +
 \sum_{k\in\ZZ}
S_k (x)\big(
 \tilde{\chi}_{k} (x_1)-
 \tilde{\chi}_{k-1} (x_1)
 \big)
 \\
 &=
 \sum_{k\in\ZZ}
 A_k (x_1, x)\,,
   \end{align*}
where the series converges in the  sense of distributions,  the functions $A_k$ satisy the moment conditions
$\int x_1 \,A_k (x_1,x)\,dx_1=0$ for all $k\in\ZZ$,
and the Schwartz norms
decay exponentially in $k$ as $k\to\pm\infty$.
Now by rescaling $x_1$ we obtain
(\ref{secondadecomposizione}).
Iterating this  argument yields
 (\ref{x_1_moment})
for all $\ell\le M_1$, for some fixed $M_1\in\NN$.
   
{}

\end{proof}

A result analogous to Lemma \ref{dyadic_decomposition} can be
stated by interchanging the role of $x_1$ and $x$.

\subsubsection*{Estimates on oscillatory integrals}
In the 
following, we prove some estimates on certain  oscillatory integrals related to our problem.
Let $|||\cdot|||$ 
denote any homogeneous norm with respect to the family of
non-isotropic dilations
\begin{equation}  \label{dilations_in R^3}
\delta \bullet (\xi_1,\xi) = (\delta \xi_1, \delta^m \xi_2, \delta^n
\xi_3), \qquad \delta >0\,,
\end{equation}
$e.g.$ we may choose 
 \begin{equation}  \label{homogeneous_norm_in the_space}
|||(\xi_1,\xi)||| =
\max\{|\xi_1|,|\xi_2|^\frac{1}{m},|\xi_3|^\frac{1}{n}\}\,.
\end{equation}
We observe in passing that
\begin{equation}\label{2mn}
\delta \bullet (\xi_1,\xi) = (\delta \xi_1, 
\delta^{2mn}\circ \xi)\,.
\end{equation}

Consider the integral
\begin{equation}\label{integraleoscillanteI}
I^{}(\xi_1,\xi,\eta):=\displaystyle \int
f(x_1,\eta)
e^{-i(\xi_1,\xi)\cdot(x_1,\gamma(x_1))}\,dx_1\,,
\end{equation}
where $\gamma(x_1)=(x_1^m,x_1^n)$
and $f$ is such that
\begin{itemize}
\item[(h1)]
$f$ belongs to $\Sch (\RR^3)$ and is
$x_1$-compactly supported 
on the interval  $1/2 \leq |x_1| \leq 4$;
\item[(h2)] $ f(x_1, 0)=0$ for all $x_1$ such that $ 1/2 \leq |x_1| \leq 4$.
\end{itemize}

The constant $C_N$ occurring in the following
inequalities depend on the Schwartz norms of $f$.
\begin{lemma}\label{oscillatory integrals_1}
Under the hypotheses {\rm{(h1)}} and {\rm{(h2)}}
the following estimate holds for  the integral $I(\xi_1,\xi,\eta)$ defined  by (\ref{integraleoscillanteI})
\begin{equation} \label{estimate_1}
| \, I(\xi_1,\xi,\eta)| \leq C_N
\frac{\rho(\eta)^{\frac{1}{2n}}}{(1+\rho(\eta))^N}
\end{equation}
for every integer $N\geq0$.
\end{lemma}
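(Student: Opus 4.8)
The plan is to estimate the one-dimensional oscillatory integral
\[
I(\xi_1,\xi,\eta)=\int f(x_1,\eta)\,e^{-i(\xi_1 x_1+\xi_2 x_1^m+\xi_3 x_1^n)}\,dx_1
\]
by separating the two features we must exploit: the decay in $\eta$, which comes purely from hypothesis (h1)--(h2) and the Schwartz character of $f$ in the transversal variable, and the (here unneeded) oscillation in $(\xi_1,\xi)$. Since estimate (\ref{estimate_1}) makes no claim of decay in $(\xi_1,\xi)$, I would simply bound the exponential by $1$ and reduce to controlling $\int |f(x_1,\eta)|\,dx_1$ over the compact set $1/2\le|x_1|\le 4$.

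First I would use (h2), namely $f(x_1,0)=0$ on the support, together with the fundamental theorem of calculus in the $\eta$-variables: writing $\eta=(\eta_2,\eta_3)$ and integrating $\partial_{\eta}f$ along the segment from $0$ to $\eta$ gives $|f(x_1,\eta)|\le C\,|\eta|\,\sup|\nabla_\eta f|$, which upgrades to $|f(x_1,\eta)|\le C_N |\eta|\,(1+|\eta|)^{-N}$ for every $N$ because $f\in\Sch(\RR^3)$ and the $x_1$-support is compact. This already yields a bound of the shape $C_N |\eta|(1+|\eta|)^{-N}$ for $I$. The remaining task is purely to translate the Euclidean quantity $|\eta|$ into the homogeneous quantity $\rho(\eta)^{1/2n}$ appearing in (\ref{estimate_1}), using the comparison estimates (\ref{stimerho}) and (\ref{stimerho1}): when $\rho(\eta)\le1$ one has $|\eta|\le B'\rho(\eta)^{1/2n}$, and when $\rho(\eta)>1$ one has $|\eta|\le B\rho(\eta)^{1/2m}$ but then, absorbing the extra growth into the rapidly decaying factor (choosing $N$ large and noting $1/2m\ge1/2n$), the product $|\eta|(1+|\eta|)^{-N}$ is still dominated by $C_N\rho(\eta)^{1/2n}(1+\rho(\eta))^{-N}$ after renaming $N$.

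Concretely I would argue: in the regime $\rho(\eta)\le 1$ both $|\eta|$ and $\rho(\eta)^{1/2n}$ are comparable up to constants and $(1+|\eta|)^{-N}\asymp(1+\rho(\eta))^{-N}\asymp1$, so the claimed bound is immediate; in the regime $\rho(\eta)>1$ I would pick an auxiliary large integer $N'$, write $|f(x_1,\eta)|\le C_{N'}|\eta|(1+|\eta|)^{-N'}$, bound $|\eta|\le B\rho(\eta)^{1/2m}\le B\rho(\eta)^{1/2n}\cdot\rho(\eta)^{1/2m-1/2n}$, and then use $|\eta|\ge A\rho(\eta)^{1/2n}\ge A$ to convert enough powers of $(1+|\eta|)^{-1}$ into powers of $\rho(\eta)^{-1}$ so that, after choosing $N'$ large in terms of the target $N$ and the gap $m<n$, what remains is $\le C_N\rho(\eta)^{1/2n}(1+\rho(\eta))^{-N}$. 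Integrating over the compact interval $1/2\le|x_1|\le4$ contributes only a harmless constant.

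The proof is essentially routine; the only point requiring a little care — and the place I would be most careful — is the bookkeeping in the large-$\rho(\eta)$ regime, where the "wrong" exponent $1/2m$ arises from (\ref{stimerho}) and must be reconciled with the $1/2n$ in the statement by trading it against the Schwartz decay. No stationary-phase or van der Corput input is needed for this particular lemma, since no decay in $(\xi_1,\xi)$ is asserted; that will presumably be the content of a companion lemma. I would also note explicitly that the constants $C_N$ depend only on finitely many Schwartz seminorms $\|f\|_{(N')}$ of $f$, as remarked before the statement, since the $x_1$-integration domain is bounded.
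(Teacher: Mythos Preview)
Your proof is correct and follows essentially the same approach as the paper: the paper simply asserts the pointwise bound $|\partial_{x_1}^k f(x_1,\eta)|\le C_N\,\rho(\eta)^{1/2n}(1+\rho(\eta))^{-N}$ as a consequence of (h2), the Schwartz property of $f$, and the norm comparisons (\ref{stimerho})--(\ref{stimerho1}), and then takes $k=0$ and integrates over the compact $x_1$-support. You have spelled out in more detail exactly the two steps the paper leaves implicit --- the mean-value argument giving the factor $|\eta|$ and the conversion from $|\eta|$ to $\rho(\eta)^{1/2n}$ with bookkeeping in the large-$\rho$ regime --- so your write-up is in fact a fleshed-out version of the paper's terse proof.
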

\begin{proof} 
Since $f$ is a Schwartz function,
by using  {\rm{(h2)}}, (\ref{stimerho}) and (\ref{stimerho1}) 
 we deduce that for every 
$(x_1,\eta) \in \{x_1\in
\RR:\, 1/2 \leq |x_1|\leq 4\} \times \RR^2$
\begin{equation} \label{Schwartz_estimates}
|  \partial_{x_1}^k  f(x_1,\eta)| \leq
C_N \frac{\rho(\eta)^{\frac{1}{2n}}}{(1+\rho(\eta))^N}
\end{equation}
for all $N \in \NN$ and $k\in\NN$.
For $k=0$ 
this inequality 
yields then (\ref{estimate_1}).
\end{proof}

\begin{lemma}\label{oscillatory integrals_2}
Let $I$ be the oscillatory integral defined by
(\ref{integraleoscillanteI}).
Assume that
 {\rm{(h1)}} and {\rm{(h2)}}
are satisfied.

Then 
\begin{equation} \label{estimate_3}
|I^{}(\xi_1,\xi,\eta)| \leq C_N
\frac{\rho(\eta)^{\frac{1}{2n}}}{(1+\rho(\eta))^N|||(\xi_1,\xi)|||^\frac{1}{n}}
\qquad \mbox{if} \quad |||(\xi_1,\xi)|||>1
\end{equation}
for every integer $N\geq0$.
\vskip0.2cm
Under the additional assumption
\begin{equation} \label{x_1_cancellation_property}
\displaystyle \int f(x_1,x)\,dx_1=0\,,\;x\in\RR^2\,,\\
\end{equation}
the following estimate holds
\begin{equation} \label{estimate_2}
|I^{}(\xi_1,\xi,\eta)| \leq C_N
\frac{\rho(\eta)^{\frac{1}{2n}}}{(1+\rho(\eta))^N} |||(\xi_1,\xi)||| \qquad
\mbox{if} \quad |||(\xi_1,\xi)|||\leq 1
\end{equation}
for every integer $N\geq0$.
\end{lemma}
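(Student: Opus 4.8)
The plan is to estimate the oscillatory integral $I(\xi_1,\xi,\eta)$ by exploiting oscillation of the phase $\phi(x_1) = \xi_1 x_1 + \xi_2 x_1^m + \xi_3 x_1^n$ via repeated integration by parts, combined with the decay in $\eta$ already extracted in Lemma \ref{oscillatory integrals_1}. First I would observe that on the support $1/2 \le |x_1| \le 4$ the derivatives of the amplitude $f(x_1,\eta)$ in $x_1$ all obey the bound \eqref{Schwartz_estimates}, so the $\eta$-factor $\rho(\eta)^{1/2n}(1+\rho(\eta))^{-N}$ factors out of every estimate and it remains to bound the purely one-dimensional oscillatory integral $\int g(x_1) e^{-i\phi(x_1)}\,dx_1$ with $g$ supported in $1/2\le|x_1|\le 4$ and $\|g\|_{C^k}\le C_k$ uniformly.

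For \eqref{estimate_3}, assume $|||(\xi_1,\xi)|||>1$. By the definition \eqref{homogeneous_norm_in the_space} at least one of $|\xi_1|$, $|\xi_2|^{1/m}$, $|\xi_3|^{1/n}$ equals $|||(\xi_1,\xi)|||$ and is hence $>1$. I would split into cases according to which coordinate dominates. When $|\xi_3|^{1/n}$ is the largest, the top-degree term forces $|\phi^{(n)}(x_1)| = n!\,|\xi_3| = n!\,|||(\xi_1,\xi)|||^n$, so van der Corput's lemma (the $n$-th derivative version) gives a gain of $|\xi_3|^{-1/n} \sim |||(\xi_1,\xi)|||^{-1}$, which is stronger than the claimed $|||(\xi_1,\xi)|||^{-1/n}$. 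When $|\xi_1|$ dominates, on $1/2 \le |x_1|\le 4$ one has $|\phi'(x_1)| \gtrsim |\xi_1|$ unless the lower-order terms conspire; here one must be slightly careful, but a standard argument (either non-stationary phase integration by parts after checking $\phi'$ does not vanish on the small support, or splitting the $x_1$-support into $O(1)$ pieces on each of which one derivative of $\phi$ of some order $\le n$ is bounded below by a power of $|||(\xi_1,\xi)|||$) yields the bound with exponent at worst $1/n$. The case $|\xi_2|^{1/m}$ dominant is analogous with $m$ in place of $n$, giving gain $|||(\xi_1,\xi)|||^{-1/m}$, again at least as good as $|||(\xi_1,\xi)|||^{-1/n}$ since $m<n$. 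Taking the worst exponent over the three cases yields \eqref{estimate_3}.

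For \eqref{estimate_2}, assume now $|||(\xi_1,\xi)|||\le 1$ and the extra cancellation \eqref{x_1_cancellation_property}. I would write $e^{-i(\xi_1,\xi)\cdot(x_1,\gamma(x_1))} = 1 + \bigl(e^{-i\phi(x_1)}-1\bigr)$; the contribution of the constant $1$ is $\int f(x_1,\eta)\,dx_1 = 0$ by \eqref{x_1_cancellation_property} (integrating the hypothesis against the partial Fourier transform in the last variables, or just directly since $f(x_1,\eta)$ is $f$ composed with the shift). For the remaining term, $|e^{-i\phi(x_1)}-1| \le |\phi(x_1)| \le |\xi_1||x_1| + |\xi_2||x_1|^m + |\xi_3||x_1|^n \lesssim |\xi_1| + |\xi_2| + |\xi_3|$ on the support, and since $|||(\xi_1,\xi)|||\le 1$ forces $|\xi_1|, |\xi_2|, |\xi_3| \le 1$ with $|\xi_j| \le |||(\xi_1,\xi)|||^{\deg}$ — in particular $|\xi_1| = |||(\xi_1,\xi)|||\cdot(|\xi_1|/|||(\xi_1,\xi)|||) \le |||(\xi_1,\xi)|||$ when $|\xi_1|$ realizes the max, and $|\xi_2| \le |||(\xi_1,\xi)|||^m \le |||(\xi_1,\xi)|||$, $|\xi_3|\le|||(\xi_1,\xi)|||^n\le|||(\xi_1,\xi)|||$ — one gets $|\phi(x_1)| \lesssim |||(\xi_1,\xi)|||$. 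Integrating $|f(x_1,\eta)|$ over the bounded support and using \eqref{Schwartz_estimates} gives \eqref{estimate_2}.

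The main obstacle I anticipate is the intermediate-coordinate analysis in \eqref{estimate_3}: when $|\xi_1|$ is the dominant term, $\phi'(x_1) = \xi_1 + m\xi_2 x_1^{m-1} + n\xi_3 x_1^{n-1}$ need not be bounded below by $|\xi_1|$ on all of $1/2\le|x_1|\le 4$, since the other terms could be comparable. The clean way around this is to invoke a sublevel-set / van der Corput argument uniformly in the coefficients: for a polynomial of degree $\le n$ with leading behaviour controlled by $|||(\xi_1,\xi)|||$, one of its derivatives of order $j\in\{1,\dots,n\}$ is $\gtrsim |||(\xi_1,\xi)|||^{?}$ on the support, and the corresponding van der Corput estimate gives a power gain; one then checks the exponents in all cases and keeps the smallest, which turns out to be $1/n$. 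I would present this via the standard lemma that $\bigl|\int_a^b g(t)e^{i P(t)}\,dt\bigr| \le C\,\|g\|_{C^1}\,(\sum_{j}|\text{coefficients of }P|)^{-1/\deg P}$ for real polynomials $P$, applied after rescaling so that $P = -\phi$ has coefficients of size comparable to powers of $|||(\xi_1,\xi)|||$.
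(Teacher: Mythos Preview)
Your treatment of \eqref{estimate_2} is essentially the paper's: kill the constant term via the cancellation \eqref{x_1_cancellation_property}, bound $|e^{-i\phi}-1|\le|\phi|\lesssim|\xi_1|+|\xi_2|+|\xi_3|$ on the support, and use $|\xi_2|\le|||(\xi_1,\xi)|||^m\le|||(\xi_1,\xi)|||$, $|\xi_3|\le|||(\xi_1,\xi)|||^n\le|||(\xi_1,\xi)|||$ when the norm is $\le1$. The paper compresses this to ``Mean Value Theorem plus $|(\xi_1,\xi)|\le3|||(\xi_1,\xi)|||$''.

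For \eqref{estimate_3} your route is workable but takes a genuinely different path from the paper. You split according to which coordinate realizes the max and then, as you yourself flag, must handle the awkward cases (e.g.\ $|\xi_1|$ dominant but lower derivatives of $\phi$ possibly vanishing) by a further sub-splitting or by invoking a uniform polynomial-phase bound. The ``standard lemma'' you quote at the end, with decay $(\sum_j|c_j|)^{-1/\deg P}$, is not literally correct without extra hypotheses, so that step needs more care; the honest fix is exactly the tedious case analysis you sketch just before it. The paper avoids all of this by a single normalization: write $(\xi_1,\xi)=\lambda\omega$ with $\lambda=|||(\xi_1,\xi)|||$, so the phase becomes $\lambda\Phi(x_1)$ with $\Phi(x_1)=\omega\cdot(x_1,\gamma(x_1))$. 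The point is then purely geometric: the curve $x_1\mapsto(x_1,x_1^m,x_1^n)$ on $\tfrac12\le|x_1|\le4$ is of \emph{finite type} $n$, so for every nonzero $\omega$ and every $x_1$ in the support some derivative $\Phi^{(\bar n)}(x_1)$ with $\bar n\le n$ is nonzero; compactness of the $x_1$-support (together with an implicit normalization of $\omega$) upgrades this to a uniform lower bound $|\Phi^{(\bar n)}|\ge C$. One application of van der Corput then gives $\lambda^{-1/\bar n}\le\lambda^{-1/n}$ times $\|\partial_{x_1}f(\cdot,\eta)\|_{L^1}$, and \eqref{Schwartz_estimates} with $k=1$ supplies the $\eta$-factor. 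What you gain with the paper's approach is that the exponent $1/n$ falls out directly from the type of the curve, with no coefficient bookkeeping; what your approach buys is that it does not require knowing the finite-type/van der Corput machinery in its geometric form, only the elementary single-variable versions.
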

\begin{proof}
Let $|||(\xi_1,\xi)|||$ be the homogeneous norm defined in
(\ref{homogeneous_norm_in the_space}). We divide the phase space
$\xi_1,\xi$ into two regions, depending on whether
$|||(\xi_1,\xi)||| \leq 1$ or $|||(\xi_1,\xi)||| >1$.

Assume that $|||(\xi_1,\xi)||| >1$
and
write $(\xi_1,\xi) = \lambda \omega, $ where $\lambda = |||(\xi_1,\xi)|||$
and $\omega={(\xi_1,\xi)}/{|||(\xi_1,\xi)|||}$.

With these notation, the oscillatory integral
$I^{}(\xi_1,\xi,\eta)$ becomes
\begin{equation}  \label{rewriting_the_oscillatory_integral}
I^{}(\lambda\omega,\eta)=\displaystyle \int_{\frac12
\leq |x_1|\leq 4} 
f(x_1,\eta)
e^{-i\lambda\omega\cdot(x_1,\gamma(x_1))}\,dx_1\,.
\end{equation}
Set
\begin{equation}\label{fase}
\Phi(x_1) = \omega \cdot (x_1,\gamma(x_1))=
\frac{1}{|||(\xi_1,\xi)|||}(\xi_1x_1+\xi_2x_1^m+\xi_3x_1^n),
\end{equation}
for $1/2 \leq |x_1| \leq 4$.

We observe that, since
the curve $x_1\mapsto (x_1,x_1^m,x_1^n)$ for $1/2 \leq |x_1|
\leq 4$ is of finite type n,  for every $x_1\in \RR$
with $1/2 \leq |x_1| \leq 4$ there exists a positive integer
$\overline{n}\leq n$ such that $\Phi^{(\overline{n})}(x_1)\neq 0$
\cite{Stein}. 

A standard application of  the
Van der Corput's lemma to the oscillatory integral (\ref{rewriting_the_oscillatory_integral}),
the compactness of the $x_1$-support , and 
\begin{equation}  \label{estimate_on derivatives_of_phase_function}
|\Phi^{({\overline{n})}}(x_1)| \geq C
\end{equation}
for some $C>0$,
for all $x_1$ such that  $\frac14\le |x_1|\le  4$, yield
\begin{equation}\label{Van_der_Corput_estimate}
|I^{}(\lambda\omega,\eta)| \leq  C
\lambda^{-\frac{1}{n}} \|\partial_{x_1}
f(x_1,\eta)\|_{L^1(\RR_{x_1})}\,.
\end{equation}

Since the function $f(x_1,\eta)$ is smooth and compactly supported
in $x_1$ where $\frac12\leq |x_1|\leq4$, by using the estimate
(\ref{Schwartz_estimates}) in the case $k=1$
 we see that
$$
\|\partial_{x_1}f(x_1,\eta)\|_{L^1(\RR_{x_1})} \leq C_N
\frac{\rho(\eta)^{\frac{1}{2n}}}{(1+\rho(\eta))^N}
$$
for every $N\in \NN$.
As a consequence,
$$
|I^{}(\lambda\omega,\eta)| \leq  C_N
\lambda^{-\frac{1}{n}} \frac{\rho(\eta)^{\frac{1}{2n}}}{(1+\rho(\eta))^N}
$$
for every $N\in \NN$.
Since $\lambda=|||(\xi_1,\xi)|||$, we get (\ref{estimate_3}).

Assume now that $|||(\xi_1,\xi)|||\leq 1$. The cancellation condition
(\ref{x_1_cancellation_property}), the Mean value theorem and the
estimate (\ref{Schwartz_estimates}) 
can be used to prove
that
$$
\begin{aligned}
|I^{}(\xi_1,\xi,\eta)| &\leq \int_{\frac12 \leq
|x_1|\leq 4} 
|f(x_1,\eta)|
|e^{-i(\xi_1,\xi)\cdot(x_1,\gamma(x_1))}-1|\,dx_1\\
&\leq C_N \frac{\rho(\eta)^{\frac{1}{2n}}}{(1+\rho(\eta))^N} |(\xi_1,\xi)|,
\end{aligned}
$$
for every $N\in \NN$.
Since 
by hypothesis $|||(\xi_1,\xi)|||\leq 1$, we have that
$|(\xi_1,\xi)| \leq 3|||(\xi_1,\xi)|||$. This inequality, together
with the previous estimate, yields (\ref{estimate_2}).

\end{proof}

The estimate (\ref{estimate_3}) in Lemma \ref {oscillatory integrals_2}
can be improved in the region of the
space $(\xi_1,\xi)$ where the first derivative of the phase
(\ref{fase})
never vanishes for $1/2\leq
|x_1|\leq4$,
as the following lemma shows.

\begin{lemma}\label{oscillatory integrals_3}

Let $I$ be the oscillatory integral defined by
(\ref{integraleoscillanteI}).
Under the hypotheses {\rm{(h1)}} and {\rm{(h2)}} there exists a costant $\tilde{C}>1$
such that
for every integer $N\geq0$ 
\begin{equation} \label{estimate_4}
|I^{}(\xi_1,\xi,\eta)| \leq C_N
\frac{\rho(\eta)^{\frac{1}{2n}}}{(1+\rho(\eta))^N|||(\xi_1,\xi)|||^N}
\qquad \mbox{when} \quad |||(\xi_1,\xi)|||>1\,
\end{equation}
and
$$
|\xi_1| > \tilde{C}(|\xi_2|+|\xi_3|)\,,  \qquad \mbox{or} \qquad |\xi_2|
> \tilde{C}(|\xi_1|+|\xi_3|) \,, \qquad \mbox{or} \qquad |\xi_3| >
\tilde{C}(|\xi_1|+|\xi_2|)
\,.$$
\end{lemma}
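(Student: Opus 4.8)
The plan is to exploit the fact that in each of the three stated regions one of the three monomials in the phase derivative dominates, so that the first derivative $\Phi'(x_1)$ is bounded away from zero and a non-stationary phase (integration by parts) argument gives rapid decay. Recall from \eqref{fase} that
$$
|||(\xi_1,\xi)|||\,\Phi'(x_1) = \xi_1 + m\xi_2 x_1^{m-1} + n\xi_3 x_1^{n-1},
$$
and on the support $1/2\le |x_1|\le 4$ the quantities $|x_1^{m-1}|$ and $|x_1^{n-1}|$ are comparable to $1$. First I would treat the region $|\xi_1| > \tilde C(|\xi_2|+|\xi_3|)$: here, choosing $\tilde C$ large enough (depending only on $m,n$ and the support), the term $\xi_1$ dominates the other two, so $|\xi_1 + m\xi_2 x_1^{m-1} + n\xi_3 x_1^{n-1}| \ge c|\xi_1| \ge c'\,|||(\xi_1,\xi)|||$ for $1/2\le|x_1|\le 4$; note that on this region $|||(\xi_1,\xi)|||$ is comparable to $|\xi_1|$. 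The same reasoning, with the roles of the monomials permuted, handles the region where $|\xi_2|$ dominates (there $|||(\xi_1,\xi)|||\sim|\xi_2|^{1/m}$, and the $m\xi_2 x_1^{m-1}$ term dominates, so $|\Phi'|\cdot|||(\xi_1,\xi)||| \gtrsim |\xi_2| \sim |||(\xi_1,\xi)|||^m \ge |||(\xi_1,\xi)|||$) and the region where $|\xi_3|$ dominates.

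Next, with a uniform lower bound $|\Phi'(x_1)| \ge c > 0$ on the support in hand, I would integrate by parts $N$ times in the oscillatory integral \eqref{rewriting_the_oscillatory_integral}, writing
$$
e^{-i\lambda\Phi(x_1)} = \frac{1}{-i\lambda\Phi'(x_1)}\,\frac{d}{dx_1}e^{-i\lambda\Phi(x_1)},
$$
and repeatedly applying the operator $g\mapsto \frac{d}{dx_1}\!\bigl(\frac{g}{-i\lambda\Phi'}\bigr)$ to $f(x_1,\eta)$. Since $\Phi$ is a polynomial whose derivatives are controlled on the compact support and $\Phi'$ is bounded below there, each integration by parts produces a factor $\lambda^{-1} = |||(\xi_1,\xi)|||^{-1}$ together with derivatives of $f$ in $x_1$; after $N$ steps one obtains a factor $|||(\xi_1,\xi)|||^{-N}$ times a finite linear combination of $\partial_{x_1}^k f$, $0\le k\le N$, integrated over the compact $x_1$-support. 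Using hypothesis (h2) and the Schwartz estimate \eqref{Schwartz_estimates} — exactly as in the proof of Lemma \ref{oscillatory integrals_1} and Lemma \ref{oscillatory integrals_2} — each such term is bounded by $C_N\,\rho(\eta)^{\frac{1}{2n}}(1+\rho(\eta))^{-N}$, which yields \eqref{estimate_4}.

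The main point requiring care — the only genuine obstacle — is the bookkeeping in the first step: verifying that a single constant $\tilde C$, depending only on $m$, $n$, and the interval $[1/2,4]$, can be chosen so that in each of the three regions the designated monomial really dominates $\Phi'(x_1)\cdot|||(\xi_1,\xi)|||$ uniformly for all $x_1$ in the support, and simultaneously that $|||(\xi_1,\xi)|||$ is comparable to the appropriate power of the dominant coordinate so that the lower bound on $|\Phi'|$ translates into a bound of the form $|\Phi'|\ge c\,|||(\xi_1,\xi)|||^{-(\text{something}\le 0)}$; in the $|\xi_2|$- and $|\xi_3|$-dominant regimes the exponent $m\ge 1$, resp.\ $n>1$, works in our favour since $|||(\xi_1,\xi)|||>1$. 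Once this is set up, the integration-by-parts argument is entirely standard and the estimates on the derivatives of $f$ are the ones already established.
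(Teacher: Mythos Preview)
Your approach is essentially the one in the paper: show that in each of the three regions $|\Phi'(x_1)|$ is bounded below uniformly on $1/2\le|x_1|\le 4$, then integrate by parts $N$ times using the operator $D=(-i\lambda\Phi')^{-1}\partial_{x_1}$ and invoke the Schwartz bounds \eqref{Schwartz_estimates}. The only cosmetic difference is that the paper, after obtaining $|\Phi'(x_1)|\ge C_\omega$, removes the $\omega$-dependence by a compactness argument on the unit sphere $\{|||\cdot|||=1\}$, whereas you aim for a direct uniform lower bound via case analysis; one small inaccuracy in your write-up is the claim ``$|||(\xi_1,\xi)|||\sim|\xi_2|^{1/m}$'' in the $\xi_2$-dominant region, which is not true in general (e.g.\ $|\xi_1|$ may still realise the maximum), but the inequality you actually need, namely $|\xi_2|\gtrsim|||(\xi_1,\xi)|||$ when $|||(\xi_1,\xi)|||>1$ and $|\xi_2|>\tilde C(|\xi_1|+|\xi_3|)$, does hold and suffices for $|\Phi'|\ge c$.
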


\begin{proof}
We  use for the integral $ I$ the notation introduced in formula
($\ref{rewriting_the_oscillatory_integral}$).

In order to improve the estimate
(\ref{estimate_3}), we have to determine the subsets of
the phase space $(\xi_1, \xi)$
where
\begin{equation}  \label{zeros_of_the
first_derivative_of_the_phase}\Phi'(x_1)=
{\xi_1}+m{\xi_2}x_1^{m-1} +n{\xi_3}x_1^{n-1}=
0\,
\end{equation}
for some
$x_1\in\RR$ such that
$\frac12\le |x_1|\le 4$.
Some elementary estimates show that
 we can find a constant $\tilde{C}>1$ sufficiently large,
such that for any fixed point $(\xi_1,\xi)\in \RR^3$, satisfying
$|||(\xi_1,\xi)|||>1$ and
$$
|\xi_1| > \tilde{C}(|\xi_2|+|\xi_3|)
\qquad \mbox{or} \qquad |\xi_2|
> \tilde{C}(|\xi_1|+|\xi_3|)  \qquad \mbox{or} \qquad |\xi_3| >
\tilde{C}(|\xi_1|+|\xi_2|),
$$
there
exists a constant $C_\omega>0$ such that
\begin{equation}  \label{estimate_on_the_first_derivative_of_the_phase}
|\Phi'(x_1)| \geq C_\omega
\end{equation}
for every $x_1\in \RR$ with $1/2 \leq |x_1| \leq 4$.

Let $D$ denote the differential operator
$$
Df(x_1,\eta) = (-i\lambda \Phi'(x_1))^{-1}\frac{\partial f}{\partial
x_1}(x_1,\eta)
$$
and let $^tD$ denote its transpose
$$
^tDf(x_1,\eta) = \frac{\partial}{\partial
x_1}\left(\frac{f}{i\lambda\Phi'(x_1)}\right).
$$
 Since $D^N(e^{-i\lambda\Phi(x_1)})
= e^{-i\lambda\Phi(x_1)}$ for every $N\in \NN$, integration by parts
shows that
$$
\begin{aligned}
I^{}(\lambda\omega,\eta)&=\displaystyle \int_{\frac12
\leq |x_1|\leq 4} f(x_1,\eta)
D^N\left(e^{-i\lambda\Phi(x_1)}\right)\,dx_1 \\
&= \displaystyle \int_{\frac12 \leq |x_1|\leq 4} (^tD)^N
f(x_1,\eta)e^{-i\lambda\Phi(x_1)}\,dx_1.
\end{aligned}
$$
Since $f(x_1,\eta)$ is a smooth function with compact support in the
$x_1$ variable in the region where $1/2 \leq |x_1| \leq 4$, $f$ satisfies  the
estimate (\ref{Schwartz_estimates}), and $\Phi(x_1)$ is a smooth
function satisfying the inequality
(\ref{estimate_on_the_first_derivative_of_the_phase}), we can verify
that
$$
|(^tD)^N f(x_1,\eta)| \leq C_{N, \omega}
\frac{\rho(\eta)^{\frac{1}{2n}}}{(1+\rho(\eta))^N} \lambda^{-N}
$$
for every $N \in\NN$.

Therefore we conclude that
$$
|I^{}(\lambda\omega,\eta)| \leq C_{N, \omega}\,\lambda^{-N} \frac{\rho(\eta)^{\frac{1}{2n}}}{(1+\rho(\eta))^N}
$$
for every $N\in \NN$.
By a compactness argument  we can show that the previous estimate
is independent of $\omega$, so that
$$
|I^{}(\lambda\omega,\eta)| \leq  C_N \lambda^{-N}
\frac{\rho(\eta)^{\frac{1}{2n}}}{(1+\rho(\eta))^N}
$$
for every $N\in \NN$.
Since $\lambda=|||(\xi_1,\xi)|||$, we obtain the inequality
(\ref{estimate_4}).
\end{proof}

\begin{remark}\label{remark_1}
In the sequel, we shall sistematically apply the estimates
(\ref{estimate_1}),
(\ref{estimate_3}),
(\ref{estimate_2}),
and
(\ref{estimate_4})
to the oscillatory integral  (\ref{integraleoscillanteI})
with the integrand $f(x_1,\eta)$  of the  form
$
x_1^{\alpha} ({\mathcal F}_2(x^\beta\varphi_J))(x_1,\eta)$, where the functions
 $\varphi_J$ are given by Lemma
\ref{dyadic_decomposition}.
In particular, 
the Schwartz norms $\|\cdot\|_{(N)}$ 
of the functions $\varphi_J$ are uniformly bounded in $J\in \ZZ^2$ for every $N\in \NN$.

We observe that  the functions
$x_1^{\alpha}  ({\mathcal F}_2(x^\beta\varphi_J))(x_1,\eta)$
fullfil the hypotheses (h1) and (h2), as a consequence of the 
cancellation condition (\ref{x_moments}).

Moreover  we have
$$\int x_1^{\ell} ({\mathcal F}_2(x^\beta\varphi_J))(x_1,\eta)dx_1=0
$$
as a consequence of the cancellation property (\ref{x_1_moment})
for all $\ell \le M_1$ for some fixed $M_1\in\NN$, so that  (\ref{x_1_cancellation_property}) 
is satisfied.
\end{remark}
\medskip

\section{{$L^2$}-boundedness}

Let $K$ be the kernel defined by (\ref{adapted_product_kernel_def})
and $T$ the operator given by $T:f\mapsto f
*K$.
In this section we  prove that $T$ is bounded on
$L^2(\RR^3)$.

Let $J=(j_1,j)$. We proved in Lemma \ref{dyadic_decomposition} that the
product kernel $K_0$ can be written as a sum
$$
K_0(x_1,x) = \sum_{J\in \ZZ^2} 2^{-j_1-jQ}
\varphi_J(2^{-j_1}x_1,2^{-j}\circ x)
$$
convergent in the sense of distributions, of Schwartz functions
$\{\varphi_J\}_{J\in \ZZ^2}$ on $\RR^3$, satisfying the properties
(i), (ii) and (iii) of Lemma \ref{dyadic_decomposition}.

\begin{proposition}\label{L^2_boundedness}
The series 
\begin{equation}  \label{dyadic_decomp_for_K}
K(x_1,x)= \sum_{J\in \ZZ^2} 
2^{-j_1-jQ}
\varphi_J\left(2^{-j_1}x_1,2^{-j}\circ (x-\gamma(x_1)\right)\,
\end{equation}
 converges in the sense of
distributions and the corresponding convolution operator $T$ is
bounded on $L^2(\RR^3)$.
\end{proposition}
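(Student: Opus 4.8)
The plan is to take the Fourier transform on the full space $\RR^3$ and show that the multiplier $\widehat K(\xi_1,\xi)$ is a bounded function, with a bound uniform over the dyadic pieces, so that $\|\widehat K\|_\infty<\infty$ and hence $T$ is bounded on $L^2$ by Plancherel. First I would compute the Fourier transform of a single term in the series \eqref{dyadic_decomp_for_K}. Writing $\Phi_J(x_1,x):=2^{-j_1-jQ}\varphi_J(2^{-j_1}x_1,2^{-j}\circ(x-\gamma(x_1)))$, one performs the $x$-integration first, using that the translation $x\mapsto x-\gamma(x_1)$ produces a factor $e^{-i\xi\cdot\gamma(2^{j_1}x_1)}$ after the substitution $x_1\mapsto 2^{j_1}x_1$, and the anisotropic dilation $2^{-j}\circ$ in the $x$-variable turns into the dual dilation $2^{j}\circ$ on $\xi$. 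Thus one is led to an expression of the form
\begin{equation*}
\widehat{\Phi_J}(\xi_1,\xi)=\int \big(\mathcal{F}_2\varphi_J\big)\big(x_1,2^{j}\circ\xi\big)\,e^{-i(2^{j_1}\xi_1)x_1}\,e^{-i(2^{j}\circ\xi)\cdot\gamma(x_1)}\,dx_1,
\end{equation*}
which, after absorbing the dilations into new dual variables, is precisely an oscillatory integral of the type \eqref{integraleoscillanteI} studied in Section 3, with $f(x_1,\eta)=(\mathcal{F}_2\varphi_J)(x_1,\eta)$ and $\eta=2^{j}\circ\xi$, and with the outer frequency $(\xi_1,\xi)$ replaced by $(2^{j_1}\xi_1,2^{j}\circ\xi)$ — note that $2^{j}\circ\xi$ in the $\rho$-homogeneous sense corresponds, via \eqref{2mn}, to scaling by $2^{j/(2mn)}$ in the $\bullet$-dilation structure of \eqref{dilations_in R^3}.

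The second step is to invoke Remark \ref{remark_1}: the functions $(\mathcal{F}_2\varphi_J)(x_1,\eta)$ satisfy (h1), (h2) and the cancellation \eqref{x_1_cancellation_property} (taking $\beta=0$), with Schwartz norms uniformly bounded in $J$. Hence Lemmas \ref{oscillatory integrals_1} and \ref{oscillatory integrals_2} apply with constants independent of $J$: from \eqref{estimate_1} we get $|\widehat{\Phi_J}(\xi_1,\xi)|\le C_N\rho(2^{j}\circ\xi)^{1/2n}(1+\rho(2^{j}\circ\xi))^{-N}$ always, from \eqref{estimate_3} an extra decay $|||(2^{j_1}\xi_1,2^{j}\circ\xi)|||^{-1/n}$ when that norm exceeds $1$, and from \eqref{estimate_2} a gain $|||(2^{j_1}\xi_1,2^{j}\circ\xi)|||$ when it is $\le 1$. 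I would then sum over $J=(j_1,j)\in\ZZ^2$. The sum over $j_1$ (for fixed $j$) is controlled as a geometric-type series because the homogeneous norm $|||(2^{j_1}\xi_1,\cdot)|||$ is $\ge 2^{j_1}|\xi_1|$, so for large positive $j_1$ we use \eqref{estimate_3} (decay in $2^{j_1}$) and for large negative $j_1$ we use \eqref{estimate_2} (gain $2^{j_1}$); only $O(1)$ intermediate values contribute the "bad" constant. The sum over $j$ is handled by the factor $\rho(2^{j}\circ\xi)^{1/2n}(1+\rho(2^{j}\circ\xi))^{-N}$: since $\rho(2^{j}\circ\xi)=2^{j}\rho(\xi)$, for fixed $\xi\ne 0$ this is summable in $j$, uniformly in $\xi$, giving a bound $\le C\sum_{j}2^{j/2n}(1+2^{j}\rho(\xi))^{-N}\le C'$ after splitting at $2^{j}\rho(\xi)\sim 1$ and choosing $N$ large. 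Combining, $\sum_J|\widehat{\Phi_J}(\xi_1,\xi)|\le C$ uniformly, which proves both the convergence of \eqref{dyadic_decomp_for_K} in the sense of distributions and that $\widehat K\in L^\infty$.

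The main obstacle is the bookkeeping of the two independent dyadic parameters $j_1$ and $j$ against the single three-parameter homogeneous norm $|||(\xi_1,\xi)|||$ of \eqref{homogeneous_norm_in the_space}: one must verify that the regimes in which \eqref{estimate_3} versus \eqref{estimate_2} is applied can be chosen consistently and that the "transition zone" where neither gives decay contributes only finitely many terms with a uniform bound — this is where the relation \eqref{2mn} between the $\bullet$- and $\circ$-dilations, and the comparisons \eqref{stimerho}–\eqref{stimerho1} between $\rho$ and the Euclidean norm, are used to line up exponents. A subtler point, if one wants the stronger conclusion later, is that \eqref{estimate_4} is \emph{not} needed here — plain $L^2$ boundedness follows from the weaker estimates — so I would not use it at this stage. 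Once $\|\widehat K\|_\infty\le C$ is established, Plancherel's theorem gives $\|Tf\|_2=\|\widehat K\,\widehat f\,\|_2\le C\|f\|_2$ for $f\in\Sch(\RR^3)$, and the operator extends by density to all of $L^2(\RR^3)$.
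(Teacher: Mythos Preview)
Your approach is essentially the same as the paper's, but there is a computational slip in the phase of your oscillatory integral. You correctly note that the translation produces the factor $e^{-i\xi\cdot\gamma(2^{j_1}x_1)}$, but then your displayed formula has $e^{-i(2^{j}\circ\xi)\cdot\gamma(x_1)}$. Since $\gamma(2^{j_1}x_1)=(2^{mj_1}x_1^m,2^{nj_1}x_1^n)=2^{2mnj_1}\circ\gamma(x_1)$, the correct phase is $e^{-i(2^{2mnj_1}\circ\xi)\cdot\gamma(x_1)}$, so the outer frequency in the oscillatory integral is $(2^{j_1}\xi_1,\,2^{2mnj_1}\circ\xi)$, not $(2^{j_1}\xi_1,\,2^{j}\circ\xi)$. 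This is exactly the content of the paper's formulas \eqref{Fourier_transform} and \eqref{link_between_multiplier_and_I_J}.

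The correction matters for the summation: with the right phase one has, by homogeneity of $|||\cdot|||$ under the $\bullet$-dilations of \eqref{dilations_in R^3},
\[
|||(2^{j_1}\xi_1,\,2^{2mnj_1}\circ\xi)|||=2^{j_1}\,|||(\xi_1,\xi)|||,
\]
and this cleanly decouples the two sums. The paper splits at $2^{j_1}|||(\xi_1,\xi)|||\le 1$ versus $>1$, applies \eqref{estimate_2} and \eqref{estimate_3} respectively, and then sums the geometric series in $j_1$ and the series $\sum_j (2^j\rho(\xi))^{1/2n}(1+2^j\rho(\xi))^{-N}$ independently. With your stated phase the parameter $j$ would enter the homogeneous norm as well (giving $\max\{2^{j_1}|\xi_1|,\,2^{j/2mn}|\xi_2|^{1/m},\,2^{j/2mn}|\xi_3|^{1/n}\}$), and the clean geometric summation in $j_1$ you outline would not go through as written. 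Once this slip is fixed, your sketch coincides with the paper's proof; in particular you are right that \eqref{estimate_4} is not needed here.
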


\begin{proof}
Let $m_J(\xi_1,\xi)$ be the Fourier transform of the $J$-th summand
of the series (\ref{dyadic_decomp_for_K}). With  a change of
variables, we may write
\begin{equation}  \label{Fourier_transform}
\begin{aligned}
m_J(\xi_1,\xi) &= \displaystyle \int_{1/2\leq |x_1|\leq 4}\int
\varphi_J(x_1,x)
e^{-i(\xi_1,\xi)\cdot(2^{j_1}x_1, 2^j\circ x+\gamma(2^{j_1}x_1))}\,dx_1dx\\
&=\displaystyle \int_{1/2\leq |x_1|\leq 4}({\mathcal
F}_2\varphi_J)(x_1,2^j\circ
\xi)e^{-i(2^{j_1}\xi_1,2^{2mn\,j_1}\circ\xi)\cdot(x_1,
\gamma(x_1))}\,dx_1.
\end{aligned}
\end{equation}
Let $|||(\xi_1,\xi)|||$ be any  norm homogeneous
 with respect to the family of
non-isotropic dilations
(\ref{dilations_in R^3}),
$e.g.$ we may choose the norm defined by
(\ref{homogeneous_norm_in the_space}). We decompose the series
\begin{equation}  \label{adapted_product_multiplier}
\sum_{J\in \ZZ^2} m_J(\xi_1,\xi)
\end{equation}
as
\begin{equation}  \label{splitted_product_multiplier}
\sum_{\tiny{\begin{array}{ll}
2^{j_1}|||(\xi_1,\xi)||| \leq 1\\
j\in \ZZ \end{array}}} m_J(\xi_1,\xi)+ \sum_{\tiny{\begin{array}{ll}
2^{j_1}|||(\xi_1,\xi)||| > 1\\
j\in \ZZ \end{array}}} m_J(\xi_1,\xi)\,.
\end{equation}
It sufficies to prove
that each of the two series in 
(\ref{splitted_product_multiplier}) converges boundedly to a bounded
function.
To do this, 
we  apply Lemma \ref{oscillatory integrals_2}
to the oscillatory integral
\begin{equation}\label{oscillatory_int_I_J}
I_J^{}(\xi_1,\xi,\eta):=\displaystyle \int_{\frac12
\leq |x_1|\leq 4} 
(\mathcal{F}_2(\varphi_J))(x_1,\eta)
e^{-i(\xi_1,\xi)\cdot(x_1,\gamma(x_1))}\,dx_1\,.
\end{equation} 
The functions
$(\mathcal{F}_2(\varphi_J))(x_1,\eta)$ have  Schwartz norms $\|\varphi_J \|_{(N)}$ 
uniformly bounded in $J\in \ZZ^2$ for every $N\in \NN$.
As observed in
Remark \ref{remark_1},
they
satisfy 
(h1), (h2) and
(\ref{x_1_cancellation_property}), so that both estimate
(\ref{estimate_3}) and
(\ref{estimate_2})
hold, with costants $C_N$ independent of 
$N$.

More precisely, consider the first series in (\ref{splitted_product_multiplier}). Since
\begin{equation} \label{link_between_multiplier_and_I_J}
m_J(\xi_1,\xi) =
I_J^{}(2^{j_1}\xi_1,2^{2mn\,j_1}\circ\xi,2^{j}\circ\xi),
\end{equation}
by applying
(\ref{estimate_2}) we obtain
\begin{align*}
\sum_{\tiny{\begin{array}{ll}
2^{j_1}|||(\xi_1,\xi)||| \leq 1\\
j\in \ZZ \end{array}}} |m_J(\xi_1,\xi)| &\leq C_N
\sum_{\tiny{\begin{array}{ll}
2^{j_1}|||(\xi_1,\xi)||| \leq 1\\
j\in \ZZ \end{array}}} \frac{\rho(2^j\circ
\xi)^{\frac{1}{2n}}}{(1+\rho(2^j\circ\xi))^N}
|||(2^{j_1}\xi_1,2^{2mn\,j_1}\circ\xi)|||\\
&=\sum_{\tiny{\begin{array}{ll}
2^{j_1}|||(\xi_1,\xi)||| \leq 1\\
j\in \ZZ \end{array}}} \frac{(2^j\rho(
\xi))^{\frac{1}{2n}}}{(1+2^j\rho(\xi))^N}
2^{j_1}|||(\xi_1,\xi)|||\\
&\leq C_N \sum_{j\in
\ZZ}\frac{(2^j\rho(\xi))^{\frac{1}{2n}}}{(1+2^j\rho(\xi))^N},
\end{align*}
for every $N\in \NN$. Since the series
$$
\sum_{j\in \ZZ}\frac{(2^jb)^{\frac{1}{2n}}}{(1+2^jb)^N}
$$
is uniformly bounded in $b$, it follows that the
series in the previous formula converges boundedly to a bounded
function.

By using the identity (\ref{link_between_multiplier_and_I_J}) and
the inequality (\ref{estimate_3}) we prove that also the second sum
on the righthand side of (\ref{splitted_product_multiplier})
converges boundedly to a bounded function.

This proves that the series in (\ref{adapted_product_multiplier})
converges boundedly (and hence in the sense of distributions) to a
bounded function $m(\xi_1,\xi)$. As a consequence the series
(\ref{dyadic_decomp_for_K}) converges in the sense of distributions
to the distribution $K=\mathcal{F}^{-1}(m)$.

Finally, by Plancherel's theorem, the boundedness of $m$ implies
that the corresponding operator $T$ is bounded on $L^2(\RR^3)$.
\end{proof}

\section{{$L^p$}-boundedness}
In this section we 
prove the $L^p$-boundedness of the operator $T$.
For this we 
split the sum (\ref{dyadic_decomp_for_K}) into two
parts
\begin{equation}
\begin{aligned}
K(x_1,x) &=\sum_{2mn j_1\leq j}2^{-j_1-jQ}
\varphi_J\left(2^{-j_1}x_1,2^{-j}\circ (x-\gamma(x_1)\right)\\
&\qquad +\sum_{2mn  j_1> j}2^{-j_1-jQ}
\varphi_J\left(2^{-j_1}x_1,2^{-j}\circ (x-\gamma(x_1)\right)\\
&=:K_1(x_1,x)+K_2(x_1,x).
\end{aligned}
\end{equation}
Correspondingly we break
the operator $T$ into the  sum
$$
Tf = f * K_1 + f* K_2 =:T_1f +T_2f
$$
and we prove that  $T_1$ and $T_2$ are bounded on $L^p(\RR^3)$
for $1<p<\infty$.

\begin{proposition}\label{L^p_boundedness_of_T_1}
The operator $T_1$ is bounded on $L^p(\RR^3)$ for $1<p<\infty$.
\end{proposition}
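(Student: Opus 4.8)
The plan is to show that $K_1$ is, up to the change of variables induced by $\gamma$, an honest product kernel on $\RR^3$, and hence that $T_1$ is bounded on $L^p$ by the classical product theory of Nagel--Ricci--Stein. Recall that $K_1$ collects exactly those dyadic pieces with $2mn\,j_1\le j$, i.e.\ those for which the dilation scale $2^j$ on the $x$-variable is at least as coarse as the scale $2^{2mn j_1}$ at which $\gamma(2^{j_1}x_1)$ lives (compare \eqref{2mn}). On such pieces the translation by $\gamma(2^{j_1}x_1)$ is a ``small'' perturbation of the support of $\varphi_J(2^{-j_1}x_1,2^{-j}\circ\,\cdot\,)$, so the translated bumps $\varphi_J\!\left(2^{-j_1}x_1,2^{-j}\circ(x-\gamma(x_1))\right)$ still enjoy the support and cancellation conditions of Definition \ref{product_kernel_def}, after a harmless enlargement of the supporting annuli. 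The main work is therefore bookkeeping: verify that the new bumps are supported where $1/2\le|x_1|\le 4$ and $\rho(x)\sim 1$ (up to constants), have uniformly bounded $C^k$ norms, and still satisfy the two cancellation conditions $\int\cdot\,dx_1=0$ and $\int\cdot\,dx=0$ — the latter being immediate since translation in $x$ preserves $\int\cdot\,dx$, and the former coming from \eqref{x_1_moment}.

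First I would record the elementary inequality that for $2mn\,j_1\le j$ and $1/2\le|2^{-j_1}x_1|\le 4$ one has $\rho\!\left(2^{-j}\circ\gamma(x_1)\right)\lesssim 2^{2mn j_1-j}\le 1$, so that $2^{-j}\circ(x-\gamma(x_1))$ ranges over a fixed compact set (independent of $J$) as soon as $2^{-j}\circ x$ does; this is where the restriction defining $K_1$ is used in an essential way. Next I would invoke Lemma \ref{dyadic_decomposition}, which already gives us $\varphi_J$ with vanishing moments in $x$ of all orders and vanishing moments in $x_1$ up to order $M_1$; combined with a Taylor expansion of $\varphi_J$ in the second variable around $2^{-j}\circ x$, the shift by $2^{-j}\circ\gamma(x_1)$ can be absorbed into a rapidly convergent sum of new Schwartz bumps of the same type, with norms controlled uniformly in $J$ because the shift has size $O(2^{2mn j_1 - j})$. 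Summing the resulting geometric series over the constraint region produces a decomposition of $K_1$ of precisely the form in Definition \ref{product_kernel_def} (possibly after a further diagonal-in-$J$ reorganization), with uniformly bounded $C^k$ norms and the required cancellations.

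Once $K_1$ is identified as a product kernel, the $L^p$-boundedness of $T_1$ for $1<p<\infty$ follows directly from the Nagel--Ricci--Stein product kernel theorem \cite{NagelRicciStein} (the $L^2$ bound is also already contained in Proposition \ref{L^2_boundedness}). The hard part is the first step: making rigorous the claim that translating the bumps along $\gamma$ does not destroy the cancellation condition $\int\psi_J(x_1,x)\,dx_1=0$. Unlike the $x$-integral, the $x_1$-integral does not obviously survive the shift, because $\gamma(x_1)$ depends on $x_1$. This is exactly the point where the finitely many vanishing $x_1$-moments \eqref{x_1_moment} of $\varphi_J$ are needed: expanding the translated bump in powers of $\gamma(x_1)=(x_1^m,x_1^n)$ against the vanishing moments in $x$, and then re-expanding in $x_1$, one trades the top-order error terms against the $x_1$-moments up to order $M_1$, leaving a remainder that is smaller by a factor $2^{-(M_1+1)(\text{gap})}$ and can be reabsorbed. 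I expect the verification that $M_1$ can be taken large enough to close this argument — and that the constants remain uniform in $J$ — to be the only genuinely delicate estimate; everything else is a routine adaptation of the single-parameter dilation arithmetic already used in Proposition \ref{L^2_boundedness}.
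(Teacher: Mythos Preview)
Your proposal takes a genuinely different route from the paper. The paper works entirely on the Fourier side: it shows directly that $\widehat{K_1}$ is a Marcinkiewicz multiplier adapted to the product dilations on $\RR\times\RR^2$, namely that
\[
\bigl|\partial_{\xi_1}^{s_1}\partial_\xi^{s}\widehat{K_1}(\xi_1,\xi)\bigr|\le C_{s_1,s}\,|\xi_1|^{-s_1}\rho(\xi)^{-s_2/2n - s_3/2m}
\]
for $s_1\in\{0,1\}$ and $|s|\le 2$, and then invokes the product Marcinkiewicz theorem. The verification feeds directly off the oscillatory--integral machinery of Section~3: one writes $\partial_{\xi_1}^{s_1}\partial_\xi^{s}m_J$ in terms of the integrals $I_J^{\alpha,\beta}$ of (\ref{oscillatory_int_I_Jab}), splits $\sum_{2mn j_1\le j}$ according to whether $2^{j_1}|||(\xi_1,\xi)|||\le 1$ or $>1$ (and, for the $\xi_1$-derivative, further according to whether $2^{j_1}|\xi_1|$ dominates $2^{mj_1}|\xi_2|+2^{nj_1}|\xi_3|$, so that (\ref{estimate_4}) applies), and the constraint $2mn j_1\le j$ is exactly what allows one to absorb the dangerous factor $2^{j_1}|\xi_1|$ into a power of $2^j\rho(\xi)$ and make the series converge.

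Your kernel-side strategy---observe that for $2mnj_1\le j$ the translation by $\gamma$ is a bounded perturbation at scale $2^j$, hence $K_1$ should itself be a product kernel in the straight variables---has the right intuition and aims at the same conclusion (product kernel and Marcinkiewicz multiplier being equivalent). What the paper's route buys is precisely that it sidesteps the step you flag as hard: on the Fourier side the $x_1$-cancellation is encoded as $m_J(0,\xi)=0$ and the curve enters only through the oscillatory phase, handled by Van der Corput and nonstationary phase rather than by juggling moments. In your plan, once you subtract a correction to restore $\int\tilde\varphi_J\,dx_1=0$, the correction terms carry $x$-cancellation but \emph{not} $x_1$-cancellation, so they do not reassemble into product-kernel pieces; you would still have to show that their sum is, say, a Calder\'on--Zygmund kernel along the diagonal scales $j=2mn j_1$, and the smallness $O(2^{(2mnj_1-j)/2n})$ alone does not give this for free. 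So your approach is viable in principle, but the ``reabsorption'' is a genuine gap as written, and the paper's multiplier verification is both shorter and closes without that extra step.
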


\begin{proof}
Let $m_J(\xi_1,\xi)$ be the multiplier given in
(\ref{Fourier_transform}). We show that the series
$$
\widehat{K_1}(\xi_1,\xi) = \sum_{2mn j_1\leq j} m_J(\xi_1,\xi)
$$
defines a Marcinkiewicz multiplier on $\RR\times\RR^2$ adapted to
the dilations (\ref{dilations_in_R^2}) on $\RR^2$. As a consequence,
we will obtain  that $T_1$ is bounded on $L^p(\RR^3)$.
This is part of the folklore, for a formal proof see \cite{R}.

It suffices to show
that $\widehat{K_1}(\xi_1,\xi)$ is a bounded function on $\RR^3$
such that for each $s_1\in\{0,1\}$ and for each
multi-index $s = (s_2,s_3)
\in \NN^2$ with $|s| \leq 2$ 
there is a positive constant
$C_{s_1,s}$ 
for which 
\begin{equation} \label{diff_ineq}
|\partial_{\xi_1}^{s_1}
\partial_\xi^{s}\widehat{K_1}(\xi_1,\xi)|\leq
C_{s_1,s}
|\xi_1|^{-s_1}
\rho(\xi)^{-\frac{s_2}{2n}-\frac{s_3}{2m}}
\end{equation}
for every $(\xi_1,\xi)\in \RR\times\RR^2$ with $\xi_1 \neq 0$ and
$\xi \neq 0$.

We already proved in
Proposition \ref{L^2_boundedness} that
$\widehat K_1(\xi_1,\xi)$ is a bounded
function on $\RR^3$. 

We give the proof of the differential inequalities (\ref{diff_ineq})
for $s_1=1$ and $s=(0,0)$ and for
$s_1=0$ and $s=(1,0)$, the other cases
being essentially the same, with the extra disadvantages of more
complicated notation and computations.

Set 
\begin{equation}\label{oscillatory_int_I_Jab}
I_J^{\alpha,\beta}(\xi_1,\xi,\eta):=\displaystyle \int_{\frac12
\leq |x_1|\leq 4} x_1^{\alpha}
(\mathcal{F}_2(x^{\beta}\varphi_J))(x_1,\eta)
e^{-i(\xi_1,\xi)\cdot(x_1,\gamma(x_1))}\,dx_1\,,
\end{equation} 
where
$\alpha\in \NN$, $\alpha \leq 2$ and
$\beta=(\beta_1,\beta_2)\in \NN^2$ with $|\beta| \leq 2$.

We first consider the case  $s_1=1$ and $s =
(0,0)$. Since
$$
\partial_{\xi_1}m_J(\xi_1,\xi) = -i2^{j_1}
I_J^{1,(0,0)}(2^{j_1}\xi_1,2^{2mn\,j_1}\circ\xi,2^{j}\circ\xi)\,,
$$
we write
\begin{align*}
\sum_{2mn\,j_1 \leq j}|\partial_{\xi_1}m_J(\xi_1,\xi)| &\leq
|\xi_1|^{-1}\biggl( \sum_{\tiny{\begin{array}{ll}
2^{j_1}|||(\xi_1,\xi)||| \leq 1\\
j\in \ZZ \end{array}}}
2^{j_1}|\xi_1|\,|I_J^{1,(0,0)}(2^{j_1}\xi_1,2^{2mn\,j_1}\circ\xi,2^{j}\circ\xi)|\\
&\qquad+\sum_{\tiny{\begin{array}{ll}
2^{j_1}|||(\xi_1,\xi)||| > 1\\
2mn\,j_1\leq j\\
2^{j_1}|\xi_1| \leq {\tilde{C}}(2^{mj_1}|\xi_2|+2^{nj_1}|\xi_3|)
\end{array}}}
2^{j_1}|\xi_1|\,|I_J^{1,(0,0)}(2^{j_1}\xi_1,2^{2mn\,j_1}\circ\xi,2^{j}\circ\xi)|\\
&\qquad+\sum_{\tiny{\begin{array}{ll}
2^{j_1}|||(\xi_1,\xi)||| > 1\\
2mn\,j_1\leq j\\
2^{j_1}|\xi_1| > {\tilde{C}}(2^{mj_1}|\xi_2|+2^{nj_1}|\xi_3|)
\end{array}}}
2^{j_1}|\xi_1|\,|I_J^{1,(0,0)}(2^{j_1}\xi_1,2^{2mn\,j_1}\circ\xi,2^{j}\circ\xi)|\biggr)\\
&=:\Sigma_1+\Sigma_2+\Sigma_3\,.
\end{align*}
Since
$2^{j_1}|\xi_1|\leq
2^{j_1}|||(\xi_1,\xi)|||$, the convergence of $\Sigma_1$ follows from
the estimate
(\ref{estimate_1})
applied to the integral
$I_J^{1,(0,0)}(2^{j_1}\xi_1,2^{2mn\,j_1}\circ\xi,2^{j}\circ\xi)$.
 
The inequalities $2^{j_1}|\xi_1| \leq
{\tilde{C}}(2^{mj_1}|\xi_2|+2^{nj_1}|\xi_3|)$ and $2mn\,j_1\leq j$ imply
that $2^{j_1}|\xi_1| \leq
{\tilde{C}}(2^{\frac{j}{2n}}|\xi_2|+2^{\frac{j}{2m}}|\xi_3|) \leq
{\tilde{C}}((2^j\rho(\xi))^{\frac{1}{2n}}+(2^j\rho(\xi))^{\frac{1}{2m}})$. This fact,
together with the estimate (\ref{estimate_3}) for the integral
$I_J^{1,(0,0)}(2^{j_1}\xi_1,2^{2mn\,j_1}\circ\xi,2^{j}\circ\xi)$ shows
that also $\Sigma_2$  converge to a bounded function.
Finally, 
the sum $\Sigma_3$ converges because of 
(\ref{estimate_4}).
Therefore (\ref{diff_ineq}) holds for $s_1=1$ and
$s = (0,0)$.

Now, assume that $s_1=0$ and
$s=(1,0)$, then we have that
$$
\partial_{\xi_2}m_J(\xi_1,\xi) = -i2^{j/2n}
I_J^{0,(1,0)}(2^{j_1}\xi_1,2^{2mn\,j_1}\circ\xi,2^{j}\circ\xi)
-i2^{mj_1}
I_J^{m,(0,0)}(2^{j_1}\xi_1,2^{2mn\,j_1}\circ\xi,2^{j}\circ\xi).
$$
We write
\begin{equation}  \label{xi_2_partial_derivative}
\begin{aligned}
\sum_{2mn\,j_1 \leq j}|\partial_{\xi_2}m_J(\xi_1,\xi)| &\leq
\rho(\xi)^{-{\frac{1}{2n}}}\biggl( \sum_{\tiny{\begin{array}{ll}
2^{j_1}|||(\xi_1,\xi)||| \leq 1\\
j\in \ZZ \end{array}}}
(2^j\rho(\xi))^{\frac{1}{2n}}\,|I_J^{0,(1,0)}(2^{j_1}\xi_1,2^{2mn\,j_1}\circ\xi,2^{j}\circ\xi)|\\
&\qquad+\sum_{\tiny{\begin{array}{ll}
2^{j_1}|||(\xi_1,\xi)||| > 1\\
j\in \ZZ
\end{array}}}
(2^j\rho(\xi))^{\frac{1}{2n}}\,|I_J^{0,(1,0)}(2^{j_1}\xi_1,2^{2mn\,j_1}\circ\xi,2^{j}\circ\xi)|\\
&\qquad+\sum_{\tiny{\begin{array}{ll}
2^{j_1}|||(\xi_1,\xi)||| \leq 1\\
j\in \ZZ
\end{array}}}
(2^j\rho(\xi))^{\frac{1}{2n}}\,|I_J^{m,(0,0)}(2^{j_1}\xi_1,2^{2mn\,j_1}\circ\xi,2^{j}\circ\xi)|\biggr)\\
&\qquad+\sum_{\tiny{\begin{array}{ll}
2^{j_1}|||(\xi_1,\xi)||| > 1\\
j\in \ZZ
\end{array}}}
(2^j\rho(\xi))^{\frac{1}{2n}}\,|I_J^{m,(0,0)}(2^{j_1}\xi_1,2^{2mn\,j_1}\circ\xi,2^{j}\circ\xi)|\biggr).
\end{aligned}
\end{equation}
By using the estimate (\ref{estimate_2}) we can easily prove that
the first and the third series 
on the righthand side of
(\ref{xi_2_partial_derivative}) converge to a bounded function.
Also, the second and the fourth series on the righthand side of
(\ref{xi_2_partial_derivative}) converge to a bounded function as we
can see by applying the estimate (\ref{estimate_3}). Hence
(\ref{diff_ineq}) holds for $s_1=0$ and
$s = (1,0)$.
\end{proof}

We  now prove  the $L^p$-boundedness of the operator $T_2$
by means of  the analytic  interpolation method.
We start constructing an  analytic family of linear operators
$T_{2,z}$.

For $z\in \CC$ we consider the kernel  and its analytic
continuation, defined in  (\ref{IMN}), 
$$
I^z(u) = \frac{G(0)\rho(u)^{z-Q}}{\sigma(S)\Gamma(z)G(z)}
$$
on $\RR^2$, where $u=(u_2,u_3)$, $\sigma(S)$ denotes the surface
measure of the sphere $$S:=\{u\in\RR^2\,:\, \rho (u)=1 \}\,,$$ and
$G(z)$ has been defined in
(\ref{GMN}).

\begin{example}
In the light of Example \ref{ esempio6-4}, when $m=2$ and $n=3$ we
have
\begin{equation}\label{Dduetre}
\begin{aligned}
&G(z):= \left( \Gamma \left( z+\frac{7}{12}\right) \right)^2
\cdot \Gamma\left( {z+{\frac{1}{6}}} \right) \Gamma\left(
{z+{\frac{1}{4}}}\right)
\Gamma\left( {z+\frac{1}{3}} \right)\\
&\quad
\Gamma\left( {z+\frac{5}{12}}
\right)\Gamma\left( {z+\frac{1}{2}}\right) \Gamma\left( {z+\frac{2}{3}}
\right) \Gamma\left( {z+\frac{3}{4}} \right)
\Gamma\left( {z+\frac{5}{6}}\right)
\Gamma\left( {z+\frac{11}{12}} \right) \,.
\end{aligned}\end{equation}\end{example}


 We shall
denote by $B_r$, $r>0$,
 the
non-isotropic ball
\begin{equation}\label{palla}
B_{r}:=\left\{ (\uu,\ud)\in\RR^2 \,:\,\,\uu^{ 2n}+\ud^{2m}\le r
  \right\}\,.
\end{equation}
Let $\theta$ be a smooth compactly supported function on $\RR^2$
whose support is contained in the ball $B_{\frac{1}{4}}$
and which is identically one in a neighbourhood of the origin. Let
$B^z$ be the distribution defined by
\begin{equation}\label{B^z}
\langle B^z,h\rangle:= \langle I^z,\theta h\rangle, \qquad h\in
\Sch(\RR^2).
\end{equation}
Observe  that
$B^0=\delta_0$,
since $I^0 =\delta_0$ and $\theta$ is identically one in a
neighbourhood of the origin.

By considering the convolution between
$2^{-j_1-jQ}\varphi_J(2^{-j_1}x_1,2^{-j}\circ(x-\gamma(x_1))$ and
$2^{-(m+n)j_1}(\delta_0\otimes B^z)(x_1,2^{-2mn\,j_1}\circ x)$ we get the
kernel
$$
K_{2,z}(x_1,x) \!= \!\sum_{2mn\,j_1>j}\!2^{-(m+n+1)j_1-jQ}
\displaystyle\!\int\varphi_J(2^{-j_1}x_1,2^{-j}\circ(u-\gamma(x_1)))
B^z(2^{-2mnj_1}\circ(x-u))du\,.\\
$$
If we set
\begin{equation}\label{lambdaJ}
\lambda_J(x_1,x): =
2^{(2mn\,j_1-j)Q}\displaystyle\int\varphi_J(x_1,2^{2mn\,j_1-j}\circ(x-u-\gamma(x_1)))
B^z(u)\,du\,,
\end{equation}
then  
the kernel $K_{2,z}$
may be written, by a change of variable, as
\begin{equation}\label{K_2z}
K_{2,z}(x_1,x):=\sum_{2mn\,j_1>j}2^{-(m+n+1)j_1}\lambda_J(2^{-j_1}x_1,2^{-2mn\,j_1}\circ x)\,.
\end{equation}
We consider the analytic family of operators
(of admissible growth)
\begin{equation}\label{T2z}
T_{2,z}f:=f\ast K_{2,z}, \qquad f\in \Sch(\RR^3)\,,
\end{equation}
and we observe that $T_{2,0}f=f\ast K_{2,0}=T_2f$.

In order to prove the $L^p$-boundedness of $T_2$
we  need some preliminary results.
The first result is  
an $L^1$- Lipschitz condition
for  the distribution $B^z$ defined by (\ref{B^z}).
\begin{lemma}\label{lemmaBz}
For all $0<\Re e z< Q$
\begin{equation}\label{LipschitzBz}
\int \Big| B^z (u+h)-B^z (u)\Big|\,du \le
C_z \rho (h)^{\Re e z}
\end{equation}
for all $h\in\RR$.
\end{lemma}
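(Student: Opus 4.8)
The plan is to reduce the $L^1$-Lipschitz estimate \eqref{LipschitzBz} to a statement about the kernel $I^z$ on a fixed compact ball, exploiting the fact that $B^z$ is a \emph{compactly supported} modification of $I^z$, together with the homogeneity of $I^z$ and the decay of its Fourier transform proved in Proposition \ref{FouriertransformI^z}. First I would write $B^z = \theta\, I^z$ and observe that, since $\theta$ is supported in $B_{1/4}$ and both $u\mapsto B^z(u+h)$ and $u\mapsto B^z(u)$ are then supported in a fixed ball $B_{R_0}$ (for $\rho(h)$ bounded, which is the only case of interest since the left-hand side is trivially bounded otherwise), the integral in \eqref{LipschitzBz} is an $L^1$-norm over a fixed compact set. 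Hence it suffices to control $\|B^z(\cdot+h)-B^z(\cdot)\|_{L^1}$ by splitting it into the ``near-diagonal'' region where $\rho(u)\lesssim \rho(h)$ and the ``far'' region $\rho(u)\gtrsim\rho(h)$.

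On the near region I would use the crude bound $|B^z(u)|\le C\,\rho(u)^{\Re e z - Q}$ (valid since $\theta$ is bounded) together with the triangle-type inequality for the homogeneous norm $\rho$; integrating $\rho(u)^{\Re e z-Q}$ over $\{\rho(u)\lesssim\rho(h)\}$ in polar coordinates adapted to the dilations \eqref{dilations_in_R^2} gives a contribution of size $C\,\rho(h)^{\Re e z}$, using $\Re e z>0$ so that the radial integral $\int_0^{c\rho(h)} r^{\Re e z -1}\,dr$ converges and equals a constant times $\rho(h)^{\Re e z}$; the analogous term with $u$ replaced by $u+h$ is handled identically after a translation, which keeps us inside a slightly larger fixed ball. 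On the far region $\rho(u)\gtrsim\rho(h)$ I would instead use a quantitative smoothness estimate for $B^z$: away from the origin $I^z$ is smooth and homogeneous of degree $z-Q$, so $|\nabla I^z(u)|\le C\,\rho(u)^{\Re e z - Q - \frac{1}{2n}}$ (losing one factor of the ``smallest'' dilation weight), and since $\theta$ is smooth and $\equiv 1$ near $0$, the same bound holds for $B^z$ on the far region (the cutoff only contributes on the annulus $\rho(u)\sim 1$, where everything is bounded). Then the mean value theorem along the dilation-adapted segment from $u$ to $u+h$, followed by integration of $\rho(u)^{\Re e z - Q - \frac{1}{2n}}\cdot\rho(h)^{\frac{1}{2n}}$ over $\{\rho(h)\lesssim\rho(u)\lesssim R_0\}$, again yields $C\,\rho(h)^{\Re e z}$ provided $\Re e z < Q$ so that the radial integral $\int_{c\rho(h)}^{R_0} r^{\Re e z - 1 - \frac{1}{2n}}\cdot r^{\frac{1}{2n}}\,\frac{dr}{r}\cdot(\dots)$ is controlled at its lower endpoint by $\rho(h)^{\Re e z}$. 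Combining the two regions gives \eqref{LipschitzBz}.

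The main obstacle I anticipate is making the ``mean value theorem in the far region'' rigorous in the non-isotropic setting: the naive Euclidean gradient bound is not homogeneous of the right degree, so one must either work with the non-isotropic difference quotient (parametrizing the segment by the anisotropic dilation and differentiating) or carefully track how $|h|$ compares to $\rho(h)^{1/2n}$ versus $\rho(h)^{1/2m}$ via \eqref{stimerho}--\eqref{stimerho1}, and check that the worst case still produces exactly the exponent $\Re e z$. A secondary technical point is handling the contribution of the cutoff $\theta$ on the transition annulus $\rho(u)\sim 1$, but there $I^z$, $\theta$ and all their derivatives are bounded uniformly (for $\Re e z$ in the compact range $(0,Q)$, using Proposition \ref{FouriertransformI^z} or directly the local integrability of $I^z$), so that region contributes at most $C\,|h|\le C\,\rho(h)^{\Re e z}$ when $\rho(h)\lesssim 1$, which is harmless. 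The constant $C_z$ will depend on $z$ through the $\Gamma$-factors in $G(z)$ and $\Gamma(z)$, which is acceptable for the analytic interpolation argument as long as it has admissible growth in $\Im z$ — a point I would note but not belabor here.
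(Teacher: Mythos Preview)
Your approach is correct and essentially identical to the paper's: split the integral according to whether $\rho(h)\le\rho(u)/2$ or $\rho(h)>\rho(u)/2$, use the crude bound $|B^z(u)|\le C\rho(u)^{\Re e\, z-Q}$ on the near region, and a mean-value estimate together with a separate treatment of the cutoff $\theta$ on the far region. The obstacle you anticipate is exactly the crux---your Euclidean-gradient computation as written does not give $\rho(h)^{\Re e\, z}$ when $\Re e\, z>\tfrac{1}{2n}$---and the paper resolves it just as you propose, invoking the non-isotropic mean value theorem of Folland--Stein to obtain $|\rho^{z-Q}(u+h)-\rho^{z-Q}(u)|\le C\,\rho(h)\,\rho(u)^{\Re e\, z-Q-1}$ on $\{\rho(h)\le\rho(u)/2\}$, which integrates directly to $C\rho(h)^{\Re e\, z}$; Proposition~\ref{FouriertransformI^z} is not used.
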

\begin{proof}
First we split the integral in
(\ref{LipschitzBz})
in two parts
\begin{align*}
&\int \Big|B^z (u+h)-B^z (u)\Big|\,du=
\int_{
\rho(h)\le \frac{\rho(u)}{2} } 
\Big| B^z (u+h)-B^z (u)\Big|du\\
&\qquad\qquad\quad + \int_{
\rho(h)> \frac{\rho(u)}{2} } 
\Big| B^z (u+h)-B^z (u)\Big|du=:I+II\,.
\\
\end{align*}
Now 
\begin{align*}
&
I\le
\int_{
\rho(h)\le \frac{\rho(u)}{2} } 
 \Big|C_z \theta  (u+h)
\big(   \rho^{z-Q}(u+h)- \rho^{z-Q}(u)\big)
\Big|\,du\\
&\qquad\qquad
\qquad
+\int_{
\rho(h)\le \frac{\rho(u)}{2} } 
\Big| C_z 
\big( \theta  (u+h)-\theta (u)\big)
 \rho^{z-Q}(u)
 \Big|du=:I_a +I_b\,.\\
\end{align*}
To estimate $I_a$, we use the Mean Value Theorem 
[FoS, p. 11], obtaining
\begin{align*}
&
I_a\le
 C
\int_{
\rho(h)\le \frac{\rho(u)}{2} } 
  \big( \rho(u)\big)^{\Re e z-Q-1} \rho(h)
du
= C\rho (h)
\int_ {2\rho (h)}^{+\infty } r^{\Re e z -Q-1}r^{Q-1}dr
  \int_S \rho (v)^{\Re e z -Q-1} d\sigma (v)
\\
&\le C\rho (h)\rho (h)^{\Re e z -1}
=
  C\rho (h)^{\Re  e z }\,.
\end{align*}

To estimate $I_b$, we observe that
\begin{align*}
&
I_b
\le C
\int_{
\rho(h)\le \frac{\rho(u)}{2} } 
 \big| 
 \theta  (u+h)-\theta(u)\big|  \big(\rho(u)\big)^{\Re e z-Q}
\,du\\
&\le C
\rho(h)^{\Re e z-Q}
\int_{
\rho(h)\le \frac{\rho(u)}{2} } 
 \big| 
 \theta  (u+h)-\theta(u)\big| 
\,du
\le 2 C
\rho(h)^{\Re e z-Q}
\int
 \big| 
 \theta  (u)\big| 
\,du\\
&\le C
\rho(h)^{\Re e z-Q}\,.\\
\end{align*}
Finally, if $k$ denotes a positive costant such that
$\rho (x+y)
\le k \big( \rho (x)+\rho (y)\big)$
we observe that
by [FoS, p. 14]
\begin{align*}
&
 II= 
 \int_{
\rho(h)> \frac{\rho(u)}{2} } 
\Big| B^z (u+h)-B^z (u)\Big|du
\\
&\le
2 C
\int_{
\rho(h)> \frac{\rho(u)}{3k} }
\big(\rho (u) \big)^{\Re e z -Q}du=
2 C
\int_{0}^{3k
\rho(h) }
r^{\Re e z -Q}r^{Q-1}dr
  \int_S \rho (v)^{\Re  e z -Q} d\sigma (v)\\
&\le
C
\rho(h)^{\Re e z}\,.
\\
\end{align*}
This inequality, combined with the bounds for $I$,
yields 
(\ref{LipschitzBz}).


\end{proof}

Then we need to recall the definition of  non-isotropic B{e}sov  spaces \cite{Stein}.

\begin{definition}
In $\RR^3$ we consider the family of one-parameter non-isotropic
dilations defined in (\ref{dilations_in R^3}). 
Let 
$
\widetilde
\rho(x_1,x)$ be any
homogeneous norm
with respect to these dilations.

We denote by $B^\alpha_{1,\infty}$ the non-isotropic B{e}sov space
of functions $f\in L^1(\RR^3)$ which satisfy an $L^1$-Lipschitz
condition of order $\alpha$, $0<\alpha<1$, i.e. there exists a
positive constant $C$ such that
$$
\displaystyle\int |f(x_1+h_1,x+h)-f(x_1,x)|\,dx_1dx \leq
C\widetilde\rho(h_1,h)^\alpha
$$
for every $(h_1,h)\in \RR \times \RR^2$, where $h=(h_2,h_3)$.

If $f\in B^\alpha_{1,\infty}$ we set
$$
\|f\|_{B^\alpha_{1,\infty}}:= \|f\|_1 + \sup_{(h_1,h)\neq(0,0)}
\widetilde\rho(h_1,h)^{-\alpha}
\displaystyle\int|f(x_1+h_1,x+h)-f(x_1,x)|\,dx_1dx.
$$
\end{definition}

Our proof hinges on the following result.
\begin{theorem}\label{Besov}
Let $\{\psi_l\}_{l\in \ZZ}$ be a family of functions such that for
some positive constants $C,\alpha,\varepsilon$ the following
hypotheses hold uniformly in $l$:
\begin{itemize}
\item[(i)] $\{\psi_l\}_{l\in \ZZ}\subset L^1(\RR^3)$;
\item[(ii)] $\displaystyle\int |\psi_l(x_1,x)|
(1+\widetilde\rho(x_1,x))^\varepsilon\,dx_1dx \leq C$;
\item[(iii)] $\displaystyle\int \psi_l(x_1,x)\,dx_1dx =0$;
\item[(iv)] $\|\psi_l\|_{B^\alpha_{1,\infty}}\leq C$.
\end{itemize}
Then the series $\sum_{l\in \ZZ}2^{-(m+n+1)l}\psi_l(2^{-l}\bullet \cdot)$ converges in the sense of
distributions to a Calder\'{o}n-Zygmund kernel.
\end{theorem}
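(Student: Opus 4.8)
The plan is to show that the distribution $K:=\sum_{l\in\ZZ}2^{-(m+n+1)l}\psi_l(2^{-l}\bullet\,\cdot\,)$ is a Calder\'on-Zygmund kernel adapted to the one-parameter dilations $\delta\bullet(\xi_1,\xi)=(\delta\xi_1,\delta^m\xi_2,\delta^n\xi_3)$ by establishing the two facts that feed the Calder\'on-Zygmund machinery on the associated space of homogeneous type: first, that the series converges in $\Sch'(\RR^3)$ to a tempered distribution with $\widehat K\in L^\infty(\RR^3)$; and second, that $K$ satisfies the integral H\"ormander regularity condition
$$
\int_{\widetilde\rho(x_1,x)>c\,\widetilde\rho(h_1,h)}\bigl|K(x_1-h_1,x-h)-K(x_1,x)\bigr|\,dx_1\,dx\le C
$$
uniformly in $(h_1,h)$, for a suitably large constant $c$. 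Throughout I write $Q_\bullet=m+n+1$ for the homogeneous dimension and $K_l(x_1,x):=2^{-Q_\bullet l}\psi_l(2^{-l}\bullet(x_1,x))$, and note that $\|K_l\|_{L^1}=\|\psi_l\|_{L^1}$ is bounded uniformly in $l$ by the hypotheses (i), (ii), (iv).

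For the first fact I would work on the Fourier side, using $\widehat{K_l}(\xi_1,\xi)=\widehat{\psi_l}\bigl(2^l\bullet(\xi_1,\xi)\bigr)$ and the uniform two-sided decay estimate
$$
\bigl|\widehat{\psi_l}(\eta)\bigr|\le C\min\bigl(\widetilde\rho(\eta)^{a},\,\widetilde\rho(\eta)^{-\alpha}\bigr),\qquad a:=\min\{1,\varepsilon/n\}>0 .
$$
For $\widetilde\rho(\eta)\le1$ this follows from the cancellation (iii): writing $\widehat{\psi_l}(\eta)=\int\psi_l(y)\bigl(e^{-iy\cdot\eta}-1\bigr)\,dy$, one bounds $|e^{-iy\cdot\eta}-1|\le C\bigl(\widetilde\rho(\eta)(1+\widetilde\rho(y))^{n}\bigr)^{a}$ and applies the weighted bound (ii) with $na\le\varepsilon$. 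For $\widetilde\rho(\eta)>1$ one chooses $v=v(\eta)$ with $e^{-iv\cdot\eta}=-1$ and $\widetilde\rho(v)\le C\widetilde\rho(\eta)^{-1}$ (possible since $\widetilde\rho$ is homogeneous of degree one), so that $2\widehat{\psi_l}(\eta)=\int\bigl(\psi_l(y)-\psi_l(y+v)\bigr)e^{-iy\cdot\eta}\,dy$ and the $B^{\alpha}_{1,\infty}$ bound (iv) gives $|\widehat{\psi_l}(\eta)|\le C\widetilde\rho(v)^{\alpha}\le C\widetilde\rho(\eta)^{-\alpha}$. Summing in $l$ and setting $s=2^l\widetilde\rho(\xi)$ turns $\sum_l|\widehat{\psi_l}(2^l\bullet(\xi_1,\xi))|$ into a lacunary sum of $\min(s^{a},s^{-\alpha})$, bounded independently of $(\xi_1,\xi)\ne0$; since the partial sums obey the same bound, dominated convergence shows that $\sum_l\widehat{K_l}$ converges in $\Sch'(\RR^3)$ to some $\widehat K\in L^\infty$, hence $\sum_l K_l\to K:=\mathcal{F}^{-1}\widehat K$ in $\Sch'(\RR^3)$.

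For the second fact I would argue on the spatial side, using only that $K_l\in L^1(\RR^3)$. Fix $(h_1,h)$, put $R_0=\widetilde\rho(h_1,h)$, and split $\sum_l$ according to whether $2^l>R_0$ or $2^l\le R_0$. When $2^l>R_0$ the translate $2^{-l}\bullet(h_1,h)$ has $\widetilde\rho$-length $2^{-l}R_0<1$, and a change of variables followed by (iv) gives $\int_{\RR^3}|K_l(x_1-h_1,x-h)-K_l(x_1,x)|\,dx_1dx\le C(R_0/2^l)^{\alpha}$, which sums geometrically and needs no restriction on $(x_1,x)$. When $2^l\le R_0$ I would exploit the constraint $\widetilde\rho(x_1,x)>c\,R_0$: after rescaling and using the quasi-triangle inequality for $\widetilde\rho$ to absorb the translation (taking $c$ large in terms of the quasi-triangle constant), the integral is at most $\int_{\widetilde\rho(u)>c'2^{-l}R_0}\bigl(|\psi_l(u-2^{-l}\bullet h)|+|\psi_l(u)|\bigr)\,du$, and since $2^{-l}R_0\ge1$ the decay (ii), in the form $\int_{\widetilde\rho(u)>\lambda}|\psi_l(u)|\,du\le C\lambda^{-\varepsilon}$, bounds this by $C(2^l/R_0)^{\varepsilon}$, which again sums geometrically. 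Adding the two ranges yields the H\"ormander bound with a constant independent of $(h_1,h)$.

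The step I expect to be the main obstacle is the bookkeeping forced by the interplay of the two geometries: all of the hypotheses (ii), (iii), (iv) are stated in terms of the non-isotropic norm $\widetilde\rho$ and the dilations $\bullet$, while the only smoothness available is the $L^1$-Lipschitz (Besov) one rather than a pointwise size or smoothness bound. Consequently one cannot reduce to finitely many scales, and both the size and the regularity halves of the Calder\'on-Zygmund condition have to be carried out in integral form and summed as geometric-type series; one must also make sure that the low-frequency exponent $a$ for $\widehat{\psi_l}$ stays strictly positive even when $\varepsilon$ is small relative to $n$, the parameter measuring the anisotropy of the dilations.
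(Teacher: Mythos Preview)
The paper does not prove this theorem: it is stated without proof (introduced by ``Our proof hinges on the following result'') and then invoked as a black box in the proof of Proposition~\ref{L^p_boundedness_of_T_2}. So there is no argument in the paper to compare against.

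Your proof is correct and is essentially the standard one for this type of statement. The two halves do exactly what is needed for Calder\'on--Zygmund theory on the homogeneous-type space $(\RR^3,\widetilde\rho,dx)$: the Fourier-side estimate $|\widehat{\psi_l}(\eta)|\le C\min(\widetilde\rho(\eta)^a,\widetilde\rho(\eta)^{-\alpha})$ gives a bounded multiplier (hence $L^2$ boundedness and convergence in $\Sch'$), and the spatial-side splitting at $2^l\sim R_0$ gives the integral H\"ormander condition (hence weak $(1,1)$). Two small remarks. First, in the high-frequency step the phase shift actually produces $\psi_l(y)-\psi_l(y-v)$ rather than $\psi_l(y)-\psi_l(y+v)$; this is immaterial for the bound. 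Second, the point you single out as the main obstacle---tracking the anisotropy so that $na\le\varepsilon$ and the weight $(1+\widetilde\rho(y))^{na}$ is controlled by hypothesis (ii)---is handled correctly; your choice $a=\min\{1,\varepsilon/n\}$ is exactly right, and the analogous care with the quasi-triangle constant in the $2^l\le R_0$ range of the H\"ormander estimate is also fine.
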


Finally we can state and prove the  $L^p$-bounds for $T_2$.

\begin{proposition}\label{L^p_boundedness_of_T_2}
The operator $T_2$ is bounded on $L^p(\RR^3)$ for $1<p<\infty$.
\end{proposition}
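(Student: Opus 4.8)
The plan is to prove $L^p$-boundedness of $T_2$ by analytic interpolation applied to the family $\{T_{2,z}\}$ introduced above, interpolating between an $L^2$-estimate on a vertical line where $\Re e z$ is large and a Calder\'on--Zygmund (hence $L^p$) estimate on the line $\Re e z=0$, where $T_{2,0}=T_2$. The interpolation is of Stein's type for analytic families of operators of admissible growth, so first I would record that $z\mapsto T_{2,z}$ has admissible growth: the only possible exponential growth comes from the reciprocals of Gamma factors in $I^z$ (and from $1/\Gamma(z)$), and these are controlled on vertical strips by Stirling's formula, so $\|T_{2,z}\|$ grows at most like $e^{c|\Im z|^2}$ on any vertical line, which is admissible after the standard conformal change of variable.

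For the endpoint at $\Re e z = \sigma$ with $\sigma$ chosen large (say just below $Q$), I would prove that $T_{2,z}$ is bounded on $L^2(\RR^3)$. By Plancherel it suffices to bound the multiplier $\widehat{K_{2,z}}$. Using the representation (\ref{K_2z})--(\ref{lambdaJ}), the multiplier is a sum over $2mn\,j_1>j$ of rescaled Fourier transforms of $\lambda_J$; since $\lambda_J$ is a convolution in the $x$-variable of $\varphi_J$ with the compactly supported distribution $B^z$, one has $\widehat{\lambda_J}(\xi_1,\xi)=I_J(\ldots)\widehat{B^z}(2^j\circ\xi)$ where $I_J$ is the oscillatory integral of Lemma~\ref{oscillatory integrals_2}. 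The decay $|\widehat{B^z}(\xi)|\le C\rho(\xi)^{-\Re e z}$ (which follows from Proposition~\ref{FouriertransformI^z} applied to $I^z$, plus the smoothness of the cutoff $\theta$) together with estimates (\ref{estimate_3}) and (\ref{estimate_2}) gives summability of the series exactly as in the proof of Proposition~\ref{L^2_boundedness}, uniformly for $\Re e z$ in a compact interval. This yields $\|T_{2,z}\|_{L^2\to L^2}\le C_z$.

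For the endpoint at $\Re e z = 0$, the goal is to show that $K_{2,z}$ is (uniformly in $\Im z$, up to admissible growth) a Calder\'on--Zygmund kernel with respect to the dilations (\ref{dilations_in R^3}); by standard CZ theory this gives $L^p$-boundedness of $T_{2,z}$ for $1<p<\infty$, and at $z=0$ this is $T_2$. Here I would invoke Theorem~\ref{Besov} with $\psi_l$ taken to be the $l$-th summand $\lambda_J$ (reindexed so that $l=j_1$, after summing out $j$ subject to $2mn\,j_1>j$; the sum over $j$ converges and preserves the relevant norms because of the moment conditions (\ref{x_moments}) on $\varphi_J$ and the fast decay this forces on $\mathcal F_2\varphi_J$ near the origin). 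The three easy hypotheses (i)--(iii) of Theorem~\ref{Besov} follow from: $L^1$-boundedness and moment condition (\ref{x_1_moment}) on $\varphi_J$ (which survives convolution with $B^z$ since $\int B^z = \langle I^z,\theta\rangle$ is finite and the $x_1$-moment is untouched), together with the decay built into $B^z$; the decay factor $(1+\widetilde\rho)^\varepsilon$ in (ii) is absorbed because $\varphi_J$ is Schwartz in $x$ and $B^z$ is compactly supported. The crucial hypothesis is (iv): I would derive the non-isotropic $L^1$-Lipschitz bound $\|\lambda_J\|_{B^\alpha_{1,\infty}}\le C$ from Lemma~\ref{lemmaBz} --- the $L^1$-Lipschitz estimate $\int|B^z(u+h)-B^z(u)|\,du\le C_z\rho(h)^{\Re e z}$ --- by writing a difference of $\lambda_J$ as a sum of two terms, one where the increment hits $\varphi_J$ (controlled by smoothness of $\varphi_J$) and one where it hits $B^z$ (controlled by Lemma~\ref{lemmaBz}), and checking the scaling exponents match the homogeneous dimension.

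The main obstacle I expect is bookkeeping at $\Re e z = 0$: there Lemma~\ref{lemmaBz} only gives $\rho(h)^{\Re e z}=\rho(h)^0$, i.e. no gain, so one cannot directly obtain a genuine Lipschitz exponent $\alpha>0$ at the endpoint line itself. The fix is to run the interpolation between $\Re e z=\sigma_0$ for a small $\sigma_0>0$ (where Lemma~\ref{lemmaBz} gives a true Lipschitz exponent $\alpha=\sigma_0$ and hence, via Theorem~\ref{Besov}, $L^p$-boundedness for all $p$) and $\Re e z = \sigma_1$ large (where we have $L^2$); since $T_{2,0}$ lies to the \emph{left} of the strip $[\sigma_0,\sigma_1]$, I instead interpolate on the strip $[{-\sigma_1},\sigma_1]$ or, more cleanly, exploit that $z=0$ can be reached by interpolating the $L^2$ bound at $\Re e z=\sigma_1$ with an $L^p$ bound that holds on a whole left half-plane up to admissible growth --- the point being that for $\Re e z\le 0$ the distribution $I^z$ (hence $B^z$) is still a well-defined compactly supported distribution of order bounded locally, so the CZ estimates via Theorem~\ref{Besov} persist with the Lipschitz exponent supplied at a nearby positive value of $\Re e z$ by continuity of the construction. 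Making this last continuity/meromorphy argument precise --- ensuring no pole of $I^z$ is hit on the relevant segment (recall $z=0$ and $z=Q-1$ are poles, so $\theta$ and the normalization $G(0)/(\Gamma(z)G(z))$ must be used to cancel them, which is exactly why $B^0=\delta_0$) --- is the delicate step.
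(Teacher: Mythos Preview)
Your diagnosis of the obstacle is correct, but your proposed fix does not work, and the underlying reason is that you have the roles of the two endpoints reversed relative to what the argument actually needs.

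You place the $L^2$ estimate at large positive $\Re e z$ and the Calder\'on--Zygmund estimate at $\Re e z=0$. As you yourself notice, Lemma~\ref{lemmaBz} gives no Lipschitz gain at $\Re e z=0$, so hypothesis (iv) of Theorem~\ref{Besov} fails there. Your attempted repair --- taking CZ at a small $\sigma_0>0$ and $L^2$ at a large $\sigma_1>0$ --- leaves both endpoints on the \emph{same} side of $z=0$, so $z=0$ is not in the interpolation strip and you cannot conclude anything about $T_{2,0}$. The vague appeal to ``CZ estimates persist on a left half-plane by continuity'' is not a proof: for $\Re e z\le 0$ the distribution $B^z$ is no longer locally integrable and there is no Lipschitz exponent to feed into Theorem~\ref{Besov}.

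The paper's approach swaps the two regimes. The CZ estimate (hence $L^p$ for all $1<p<\infty$) is proved for $0<\Re e z<Q$, precisely because there Lemma~\ref{lemmaBz} supplies a genuine exponent $\alpha=\Re e z/2>0$ for hypothesis (iv). The $L^2$ estimate is pushed to \emph{negative} real part, namely $-\tfrac{1}{2mn^2}<\Re e z<0$. The key point you are missing is this: when $\Re e z<0$, the bound $|\widehat{B^z}(\xi)|\le C(1+\rho(\xi))^{-\Re e z}$ is a \emph{growth} estimate, but the Van der Corput decay in (\ref{estimate_3}), which contributes a factor $(2^{j_1}|||(\xi_1,\xi)|||)^{-1/n}$ to $|m_J|$, beats the growth $(2^{j_1}|||(\xi_1,\xi)|||)^{2mn|\Re e z|}$ of $\widehat{B^z}(2^{2mn j_1}\circ\xi)$ as long as $2mn|\Re e z|<1/n$. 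This is what makes the multiplier series for $\widehat{K_{2,z}}$ converge boundedly for slightly negative $\Re e z$. With $L^2$ at $\Re e z=a<0$ and $L^p$ (all $p$) at $\Re e z=b>0$, the point $z=0$ lies strictly inside the strip and Stein interpolation gives $T_2=T_{2,0}$ bounded on $L^p$.
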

\begin{proof}
The proof is by complex  interpolation.

We first prove 
that  $T_{2,z}$ is
bounded on $L^2(\RR^3)$ for    $-\frac{ 1 }{2mn^2 }<\Re e z<0$.

En easy computation shows that
\begin{align*}
\big(
2^{-(m+n+1)j_1}
\lambda_J 
(2^{-j_1}\cdot,
(2^{-2mn\,j_1}\circ \cdot)\big)\,{\hat{}}\,(\xi_1, \xi)&=
\widehat{\lambda_J}\big( 2^{j_1}\xi_1, 2^{2mn j_1}\circ \xi\big)\\
&=
 \widehat{B^z}\big(
2^{2mn j_1}\circ \xi\big)\,m_J (\xi_1, \xi)\,,
\end{align*}
where
$\lambda_J$ id defined by 
(\ref{lambdaJ}),
 $m_J$ is the Fourier transform of
$2^{-j_1-jQ}
\varphi_J\left(2^{-j_1}x_1,2^{-j}\circ (x-\gamma(x_1)\right)$ and it is given by
(\ref{Fourier_transform}), while $ \widehat{B^z}$ is the Fourier
transform of
$2^{-(m+n)j_1}
\Big(
\delta_0\otimes
B^z
\big( 
\xi_1,
2^{-2mn j_1}\circ x\big)\Big)$. 

It follows from
  Proposition
\ref{FouriertransformI^z}
that
\begin{equation}\label{decadimento B^z}
\Big| \widehat{B^z} (\xi)\Big|\le C(1+\rho (\xi))^{-\Re e z}\quad. 
\end{equation}
Our aim is now to prove that
the series
$$\sum_{2mn j_1>j}
\widehat{B^z}\big(
2^{2mn j_1}\circ \xi\big)\,m_J (\xi_1, \xi)\,,$$ 
corresponding to the Fourier transform of (\ref{K_2z}),
converges boundedly to a bounded function.
The inequality (\ref{decadimento B^z}) yields
\begin{align*}\Big|
\sum_{2mn j_1>j}
 \widehat{B^z}\big(
2^{2mn j_1}&\circ \xi\big)\,m_J (\xi_1, \xi)\Big|\le
C_z 
\sum_{\tiny{\begin{array}{ll}
2^{j_1}|||(\xi_1,\xi)||| \le 1\\
2mn\,j_1> j\\
\end{array}}}
\big( 1+\rho (2^{2mn j_1}\circ \xi)\big)^{|\Re e z| }\,\big| m_J (\xi_1, \xi)\big|\\
&+
\sum_{\tiny{\begin{array}{ll}
2^{j_1}|||(\xi_1,\xi)||| > 1\\
2mn\,j_1> j\\
\end{array}}}
\big( 1+\rho (2^{2mn j_1}\circ \xi)\big)^{|\Re e z| }\,\big| m_J (\xi_1, \xi)\big|=:\mathrm{J_1}+\mathrm{J_2}\,.
\end{align*}
Since
$$2^{2mn j_1} \rho (\xi)=
2^{2mn j_1}
(\xi_2^{2n}+\xi_3^{2m})\le
2\cdot\big( 2^{ j_1}
|||(\xi_1,\xi)|||\big)^{2mn}\,,
$$
we have
\begin{equation}
\mathrm{J_1}\le
C\,3^{|\Re e z| }
\sum_{\tiny{\begin{array}{ll}
2^{j_1}|||(\xi_1,\xi)||| \le 1\\
2mn\,j_1> j\\
\end{array}}}
\,\big| m_J (\xi_1, \xi)\big|\le C_z\,,
\end{equation}
In the light of what has been proved in Proposition \ref{L^2_boundedness},
to estimate 
$\mathrm{J_2}$  observe that
$$\mathrm{J_2}
\le C_z
\sum_{\tiny{\begin{array}{ll}
2^{j_1}|||(\xi_1,\xi)||| > 1\\
2mn\,j_1> j\\
\end{array}}}
\big( 2^{ j_1}
|||(\xi_1,\xi)|||\big)^{2mn|\Re e z|}\big| m_J (\xi_1, \xi)\big|\,.
$$
Now, by using
(\ref{link_between_multiplier_and_I_J})
and estimate (\ref{estimate_3}) for $I_J$, we obtain 
$$\mathrm{J_2}
\le C_z
\sum_{\tiny{\begin{array}{ll}
2^{j_1}|||(\xi_1,\xi)||| > 1\\
2mn\,j_1> j\\
\end{array}}}
\frac{
\big( 2^{ j_1} |||(\xi_1,\xi)|||\big)^{2mn|\Re e z|}}
{ \big(2^{ j_1 }|||(\xi_1,\xi)|||\big)^{1/n} }
\frac{ \rho (2^j\circ \xi)^{ 1/2n} }{\Big( 1+\rho (2^j\circ \xi)\Big)^N}\,,
$$
so 
that
  the operator $T_{2,z}$ is bounded on $L^2(\RR^3)$
   if 
$-\frac{ 1 }{2mn^2 }<\Re e z<0$.



We will now show that the operator $T_{2,z}$ is
bounded on $L^p(\RR^3)$ for $1<p<\infty$
 for  $0<\Re ez<Q$.


By setting
$j-2mn\,j_1=k$
and using 
(\ref{2mn}),
(\ref{K_2z}) may be written as
$$
\begin{aligned}
K_{2,z}(x_1,x) &= 
\sum_{j_1 \in \ZZ}2^{-(m+n+1)j_1} \sum_{k=-\infty}^0
\lambda_{(j_1,k+2mn\,j_1)}(2^{-j_1}\bullet(x_1,x))
\end{aligned}
$$
where
\begin{equation}   \label{lambda_functions}
\lambda_{(j_1,k+2mn\,j_1)}(x_1,x) = 2^{-Qk} \displaystyle\int
\varphi_{(j_1,k+2mn\,j_1)}(x_1,2^{-k}\circ(x-u-\gamma(x_1)))B^z(u)\,du.
\end{equation}
We shall now prove that the functions 
$$
\psi_{j_1}(x_1,x):=\sum_{k=-\infty}^0 \lambda_{(j_1,k+2mn\,j_1)}(x_1,x)
$$
satisfy the hypotheses of Theorem \ref{Besov} uniformly in $j_1$.

Since the estimates on the functions $\psi_{j_1}(x_1,x)$ that we
will prove later  are independent of $j_1$, it suffices to prove that
$$
\psi_0(x_1,x) = \sum_{k=-\infty}^0 \lambda_{(0,k)}(x_1,x)
$$
satisfies the hypotheses of Theorem \ref{Besov}.
\par
We begin proving  that $\psi_0$ belongs to $L^1(\RR^3)$.
As a consequence of  (\ref{x_moments}) with $\beta=0$ 
and a change of variable we obtain
$$\psi_0 (\xu,x)= \sum_{k=-\infty}^0 
\int \varphi_{(0,k)}
(x_1,v) \big( B^{z} (x-2^k \circ v-\gamma (x_1))
-B^z (x-\gamma (x_1))\big) dv\,,$$
whence
\begin{align*}
\int |\psi_0 (\xu,x)|d\xu dx
&\le 
\sum_{k=-\infty}^0 
\int 
\Big(
\int
|\varphi_{(0,k)}
(x_1,v)|\cdot \big|  B^{z} (x-2^k \circ v-\gamma (x_1)) \\
&\qquad\qquad -B^z (x-\gamma (x_1))\big|dv \Big) d\xu dx\\
&=\sum_{k=-\infty}^0 
\int 
|\varphi_{(0,k)}
(x_1,v)|
\big(
\int 
 \big|  B^{z} (x-2^k \circ v)\\
&\qquad\qquad-B^z (x)\big|dx  \big)
 d\xu dv\\
&\le C_z 
\sum_{k=-\infty}^0 
\int 
\big|\varphi_{(0,k)}
(x_1,v)\big|
\rho
(2^k \circ v)^{\Re e z}
d\xu dv   \\
&= C_z 
\sum_{k=-\infty}^0 
2^{k\Re e z}
\int
\big|\varphi_{(0,k)}
(x_1,v)\big|
\rho
(v)^{\Re e z}
d\xu dv   
\le C_z\,,
\end{align*}
by  Lemma \ref{lemmaBz} and since 
$\{\varphi_{(0,k)}\}$ is a bounded set in
$\Sch (\RR^3)$.

Now we shall prove that $\psi_0$ 
satisfies the hypothesis ii)  of Theorem \ref{Besov}
with $\varepsilon =1$ and
\begin{equation}
\widetilde
\rho(x_1,x)=|x_1|+|x_2|^\frac1m+|x_3|^\frac1n\,.
\end{equation}
We split the integral as
\begin{align*}
\int
 |\psi_0 (\xu,x)|
(1+\widetilde\rho(x_1,x))
d\xu dx&= 
\Big(
\int_{\tiny{\begin{array}{ll}
{\unme \le |\xu|\le 4}\\
{|x_2|^\frac1m+|x_3|^\frac1n\le M}
\end{array}}} 
+
\int_{\tiny{\begin{array}{ll}
{\unme \le |\xu|\le 4}\\
{|x_2|^\frac1m+|x_3|^\frac1n> M}
\end{array}}}  \Big)
 |\psi_0 (\xu,x)|\\
\qquad\qquad\qquad\qquad&\qquad\qquad\quad
(1+\widetilde\rho(x_1,x))\, d\xu dx\\
&=\Im_1+\Im_2\,, \\
\end{align*}
where the costant $M$ will be chosen later.

Now $\Im_1$ is bounded by some positive costant $C$, since
$\psi_0(x_1, x)$ belongs to $L^1 (\RR^3)$
and 
$\widetilde\rho(x_1,x)$ is bounded on the integration set.
To study $\Im_2$, observe that
\begin{align*}
\Im_2
&\le
\sum_{k=-\infty}^0 
2^{-kQ}
\int_{\tiny{\begin{array}{ll}
{\unme \le |\xu|\le 4}\\
{|x_2|^\frac1m+|x_3|^\frac1n> M}
\end{array}}}  
\int_{\rho (u)\le {\frac{1}{4}}}
\Big|
\varphi_{(0,k)}
(x_1,2^{-k} \circ ( x-u-\gamma (x_1))
\Big|
\cdot \Big|  B^{z} (u)\Big|
du \\
&\qquad\qquad\qquad\qquad\qquad
(5+
|x_2|^\frac1m+|x_3|^\frac1n
)\, d\xu dx\\
&\le C
\sum_{k=-\infty}^0 
2^{-kQ}
\int_{\tiny{\begin{array}{ll}
{\unme \le |\xu|\le 4}\\
{|x_2|^\frac1m+|x_3|^\frac1n> M}
\end{array}}}  
\int_{\rho (u)\le {\frac{1}{4}}}
\frac{
||\varphi_{(0,k)}||_{(N)}{\big| B^z (u)\big| }}{
\Big( 1+\big| 2^{-k} \circ ( x-u-\gamma (x_1))
\big|\Big)^N
}
du \\
&\qquad\qquad\qquad\qquad\qquad
(5+
|x_2|^\frac1m+|x_3|^\frac1n
)\, d\xu dx\\
&\le C
\sum_{k=-\infty}^0 
2^{-kQ}
\int_{\tiny{\begin{array}{ll}
{\unme \le |\xu|\le 4}\\
{|x_2|^\frac1m+|x_3|^\frac1n> M}
\end{array}}}  \!\!
(5+
|x_2|^\frac1m+|x_3|^\frac1n
)
\int_{\rho (u)\le {\frac{1}{4}}}
\frac{
||\varphi_{(0,k)}||_{(N)}{\big| B^z (u)\big| }}{
\Big( 1+\big| 2^{-k} \circ  x
\big|\Big)^N
}
du \,d\xu dx\,,\\
\end{align*}
where the last inequality follows from the fact that
for some costant $C>0$  we have
$$
 1+\big| 2^{-k} \circ ( x-u-\gamma (x_1))
\big|\ge
C\big( 1+ \big|2^{-k} \circ x \big|\big)$$
if  
${|x_2|^\frac1m+|x_3|^\frac1n> M}$,
if  $M$ is sufficiently large, 
${\rho (u)\le {\frac{1}{4}}}$, 
${\unme \le |\xu|\le 4}$.

Next  
(\ref{stimerho}) yields
\begin{equation}\label{disugHLP}
\big|
2^{-k} \circ x\big|\ge A\big(
\rho(2^{-k} \circ x)
\big)^{{\frac{1}{2n}}} 
\end{equation}
for some $A>0$, if $k\le 0$ and
$|x_2|^\frac1m+|x_3|^\frac1n>M$.

Now
(\ref{disugHLP}) and the local integrability of
$B^z$ 
give
\begin{align*}\Im_2
&\le C
\sum_{k=-\infty}^0 
2^{-kQ}
\int_{
{|x_2|^\frac1m+|x_3|^\frac1n> M}
}  \!\!
(5+
|x_2|^\frac1m+|x_3|^\frac1n
)
\frac{
1}{
\Big( 1+\rho\big( 2^{-k} \circ  x
\big)\Big)^{\frac{N}{2n}}
}
\,dx\,\\
&\le C
\sum_{k=-\infty}^0 
2^{-kQ}
\int_{
{|x_2|^\frac1m+|x_3|^\frac1n> M}
}  \!\!
(5+
|x_2|^\frac1m+|x_3|^\frac1n
)
\frac{
1}{
\Big( 1+2^{-k}\rho(  x)
\Big)^{\frac{N}{2n}}
}
\,dx\,\\
&\le C
\sum_{k=-\infty}^0 
2^{-kQ}
2^{\frac{kN}{2n}}
\int_{
{|x_2|^\frac1m+|x_3|^\frac1n> M}
}  \!\!
(5+
|x_2|^\frac1m+|x_3|^\frac1n
)
\frac{
1}{
 \rho(  x)^{\frac{N}{2n}}
}
\,dx\le C\,,\\
\end{align*}
yielding ii).

Property iii) is an immediate consequence of (\ref{x_moments}).

Finally
we shall 
prove that $\psi_0$ satisfies property iv) in
 Theorem \ref{Besov}.

We start proving that
\begin{equation}\label{disugintegrale2}
\int\big|
\psi_0
(x_1, x+h)-\psi_0 (x_1,x)\big|dx_1\,dx\le
C'_{z}\rho(h)^{\frac{\Re e z}{4mn}}\,, \;\,h\in\RR^2\,.
\end{equation}
Indeed  we have
\begin{align*}
\int\big|
&\psi_0
(x_1, x+h)-\psi_0 (x_1,x)\big|dx_1\,dx\\
&\le
\sum_{k=-\infty}^0 
\int 
\Big(
\int \Big|
\varphi_{(0,k)}
(x_1,v)
\big(
B^{z} (x+h-2^k \circ v-\gamma (x_1))
-B^{z} (x+h-\gamma (x_1))\big)\\
&\qquad\qquad \qquad-\varphi_{(0,k)}
(x_1,v)
\big(
B^{z} (x-2^k \circ v-\gamma (x_1))
-B^{z} (x-\gamma (x_1))\big)\Big| dx dx_1\Big)dv\\
&=:
\sum_{k=-\infty}^0 \int J_k (v)dv\,.
\end{align*}
Now
observe that
\begin{align}
J_k (v)&\le 2
\int \Big|
\varphi_{(0,k)}
(x_1,v)
\big(
B^{z} (x-2^k \circ v-\gamma (x_1))
-B^{z} (x-\gamma (x_1))\big)\Big| dx dx_1\notag\\
&=2
\int \big|
\varphi_{(0,k)}
(x_1,v)\big|
\Big(\int\Big|
B^{z} (y-2^k \circ v)
-B^{z} (y)\Big|dy \Big) dx_1 \notag\\
&\le C_{z,N}
\frac{\rho (2^k\circ v)^{\Re e z}}{(1+|v|)^N}=
C_{z,N}2^{k\Re e z}
\frac{ \rho (v)^{\Re e z}}{(1+|v|)^N}\,,\;\;N\in\NN\,,\label{primastimaJ}
\\\notag
\end{align}
where we used both the 
Lemma \ref{lemmaBz} and the fact that the functions
$\varphi_{(0,k)}
$ are a bounded set in $\Sch (\RR^3)$.

On the other hand, 
as a consequence of Lemma
 \ref{lemmaBz}
\begin{align}
J_k (v)&\le 2
\int \Big|
\varphi_{(0,k)}
(x_1,v)
B^{z} (y+h-\gamma (x_1))
-\varphi_{(0,k)}
(x_1,v)
B^{z} (y-\gamma (x_1))
\Big|dy dx_1\notag \\
&\le 
C_{z,N}
\frac{ \rho (h)^{\Re e z}}{(1+|v|)^N}\,,\;\;N\in\NN\,,h\in\RR^2\,.
\label{secondastimaJ}\end{align}

By taking a suitable  mean
 between 
 estimates
 (\ref{primastimaJ})
and
 (\ref{secondastimaJ})
we obtain
\begin{equation}
J_k (v)=J_k(v)^{\frac{4mn-1}{4mn}}
\cdot J_k(v)^{\frac{1}{4mn}}\le 
C_{z,N}
2^{{\frac{4mn-1}{4mn}}{ k\Re e z}}
\rho (h)^{\frac{\Re e z}{4mn}}
\frac{ \rho (v)^{{\frac{4mn-1}{4mn}}{\Re e z}}}{(1+|v|)^N}\,,\;\;N\in\NN\,,h\in\RR\,,
\end{equation}
so that 
\begin{align*}
\int\big|
\psi_0
(x_1, x+h)&-\psi_0 (x_1,x)\big|dx_1\,dx\le
\sum_{k=-\infty}^0 \int J_k (v)dv\\
&\le
C_{z,N}
\sum_{k=-\infty}^0
2^{{\frac{4mn-1}{4mn}}{ k\Re e z}}
\rho (h)^{\frac{\Re e z}{4mn}}
\int
\frac{ \rho (v)^{\frac{4mn-1}{4mn}{\Re e z}}}{(1+|v|)^N}
dv
\le 
C_{z,N}
\rho (h)^{\frac{\Re e z}{4mn}}\,,
\\
\end{align*}
proving (\ref{disugintegrale2}).

In an analogous way we can prove 
 the inequality
\begin{equation}\label{disugintegrale1}
\int\big|
\psi_0
(x_1+h_1, x)-\psi_0 (x_1,x)\big|dx_1\,dx\le
C'_{z}|h_1|^{\frac{\Re e z}{2}}\,.
\end{equation}

Finally we have
\begin{align*}
&\int\big|
\psi_0
(x_1+h_1, x+h)-\psi_0 (x_1,x)\big|dx_1\,dx\\
&\le\int\big| 
\psi_0
(x_1+h_1, x+h)
-\psi_0
(x_1+h_1, x)\big|dx_1\,dx+
\int\big| 
\psi_0
(x_1+h_1, x)
-\psi_0
(x_1, x)\big|dx_1\,dx
\\
&\le C_{z} \Big(
\big|h_1\big|^{\frac{\Re e z}{2}}
+
\rho(h)^{\frac{\Re e z}{4mn}}\Big)
\end{align*}
Since
$$ \big|h_1\big|^{\frac{\Re e z}{2}}
+
\rho(h)^{\frac{\Re e z}{4mn}}
\le
C \Big(
\big|h_1\big|
+
\rho(h)^{\frac{1}{2mn}}
\Big)^{\frac{\Re e z}{2}}
$$
and, by a  standard inequality, 
$$
{\big( h_2^{2n}+h_3^{2m}\big)}^{\frac{1}{2mn}}\le 
|h_2|^\frac1m+|h_3|^\frac1n\,,
$$
we finally get
$$ \big|h_1\big|^{\frac{\Re e z}{2}}
+
\rho(h)^{\frac{\Re e z}{4mn}}
\le
C 
\tilde{\rho}(h_1,h)^{\frac{\Re e z}{2}}\,,
$$
proving (iv) with $\alpha=
{\frac{\Re e z}{2}}$.
As a consequence of  Theorem \ref{Besov} the operator
  $T_{2,z}$ is
bounded on $L^p(\RR^3)$, $1<p<\infty$,
 for  $0<\Re ez<Q$. 

Finally choose  $p_0\in (1,2)$ and
fix $q_0\in (1, p_0)$
such that
$$\frac{1}{p_0}=
\frac{b}{2(b-a)}-\frac{a}{q_0 (b-a)}\,.$$
Then
the operator
$T_{2,0}=T_2$ is
bounded on $L^{p_0}
(\RR^3)$. 
By the arbitrariness of $p_0$ and by
duality we conclude that 
$T_{2,0}=T_2$ is
bounded on $L^{p}
(\RR^3)$ for all $1<p<\infty$. 
\end{proof}

\section{Final remarks}\label{final}\medskip


 \subsubsection*{Remark 1.}
 We observe that our results also hold in the more general situation in which
 the curve $\gamma(x_1) = (x_1^m, x_1^n)$ is perturbed to $\hat \gamma(x_1) = (x_1^m + \lambda_2 (x_1), x_1^n + \lambda_3(x_1))$
 with $\lambda_1$ and $\lambda_2$ smooth and satisfying $\lambda_2(x_1) = \it{o}(x_1^m)$ and $\lambda_3(x_1) = \it{o}(x_1^n)$.
In fact, given  a product kernel $K_0$ on $\RR^3$, we define the distribution $K$ by
\begin{equation}\notag \label{adapted_product_kernel_defi}
\displaystyle \int K(x_1,x)f(x_1,x)\,dx_1dx := \int
K_0(x_1,x)f(x_1,x+ \hat\gamma(x_1))\,dx_1dx
\end{equation}
for all Schwartz functions $f$ on $\RR^3$.
Then we break the integral
on the right-hand side
as
\begin{equation}
\int
K_0(x_1,x)f(x_1,x+ \hat \gamma(x_1))\,dx_1dx=
\left(\int\limits_{\Bbb R} \int\limits_{-1}^1
+\int\limits_{\Bbb R} \int\limits_{|x_1|>1}\right)
K_0(x_1,x)f(x_1,x+\hat \gamma(x_1))\,dx_1dx\,.\notag
\end{equation}
The first term, by means of a Taylor expansion, may be reduced 
to the polynomial case, while the latter one 
is of Calderon-Zygmund type.

\subsubsection*{Remark 2.}
Our results still hold 
for convolution operators 
on $\RR^d=\RR\times\RR^{d-1}$
with kernels adapted to curves of the form
 $\gamma(x_1)=(x_1^{m_1}, x_1^{m_2}, \ldots, x_1^{m_{d-1}})$,
 $1<m_1< m_2< \ldots< 
m_{d-1}$, $m_j\in\NN$, $j=1,\ldots, d-1$,
with values in $\RR^{d-1}$.
 Since notation in the  higher dimensional case
is more cumbersome, we gave full details of the proof 
only for the space $\RR^3$.

On
$\RR^{d-1}$ we introduce the dilations given by
\begin{equation}  \label{dilations_in_R^d}
\delta \circ x = (\delta^{{1}/{2m_{d-1}}}x_2,\delta^{{1}/{2m_{d-2}}}x_3,\ldots,
\delta^{{1}/{2m_{1}}})\,,\qquad 
\end{equation}
where
$\delta>0$ and
$x=(x_2,\ldots,x_{d-1})$.
Moreover we equip $\RR^{d-1}$
with the  smooth
homogeneous norm 
$$\rho(x)= x_2^{2m_{d-1}} +x_3^{2m_{d-2}}+\ldots +x_ {d}^{2m_{1}}\,$$
on $\RR^{d-1}$.
From this point on, 
the proof follows the same pattern as in $\RR^3$.
In particular,
the strategy of using Bernstein-Sato polynomials 
to build the meromorphic continuation of
the non isotropic Riesz potentials $\rho(x)^{z-\tilde{Q}}$
(here $\tilde{Q}=\frac{1}{2m_{d-1}}+
\frac{1}{2m_{d-2}}
+\ldots+\frac{1}{2m_{1}}$)
 works in the multidimensional case as well, with  some additional 
 computational difficulties 
 in finding the zeros of the Bernstein-Sato polynomials
 \cite{BMS3}.

 \subsubsection*{Remark 3.}
As an example of the class of operators
studied in this paper we exhibit
the following operator, arising in the
study of
the $L^p-L^q$ boundedness 
of a double analytic family 
of fractional integrals along curves in the space
(see \cite{CSe} for the planar case).

Let $\psi$ be a smooth function on $\RR^2$, such that
$\psi(u_1,-u_2)=\psi(u_1,u_2)$ for every $(u_1,u_2) \in \RR^2$,
$\psi\equiv 1$ on $B_{\unme}$ and $\psi\equiv 0$ outside $B_1$, with
$0\le \psi\le 1$ on $\RR^2$
(here
 $B_{r}$, $r>0$, denotes the non-isotropic ball in $\RR^2$ defined by
 (\ref{palla})).
\par
Define an analytic family of distributions $K^{\gamma}_{z}$, for
$\gamma$ and $z$ in $\CC$, $\Re e \gamma\ge 0$, in the following way
\begin{equation}
\label{KGZ} <K^{\gamma}_{z},f>:= \int <D_{z}(u_1,\ud), f(t, \uu
t^m,\ud t^n)
>|t|^{\gamma}\frac{dt}{t},
\end{equation}
where $D_z$, $\Re e\, z>0$,
denotes the
 family of analytic distributions given by
\begin{align*}
<D_{z}, h>:=& <\psi(\cdot,\cdot)\,I^z(\cdot,\cdot)\,,h(\cdot
+1,\cdot+1)>
\\
=& C_z\,\int_{\RR^2} \rho(\uu-1,\ud-1)^{z-Q}\psi\left(
\uu-1,\ud-1\right)\,h(\uu,\ud)\,d\uu,d\ud\,,
\end{align*}
with $C_z:= \frac{G(0)}{2\sigma(S) \Gamma(z)G(z)}$ and $h\in
\ciC^{\infty}_{c}(\RR^2)$. It is straightforward to check that
$D_{z}$ may be extended to all $z\in\CC$.

As a consequence of Proposition \ref{deltaDirac} we have
\begin{equation}\label{deltazero}
<D_0,h>=h(1,1).
\end{equation}

We remark that, if $\Re e \gamma=0$, then
$$
  <K^{\gamma}_{z},f>:=
\displaystyle\lim_{\varepsilon\to 0}\int <D_{z}(\uu,\ud), f(t, \uu
t^m,\ud t^n)>|t|^{i\rho +\varepsilon}\frac{dt}{t} \,,
$$
where $\Im m\gamma=\rho$, for every $f\in \ciC^{\infty}_{c}(\RR^2)$.
Observe moreover that $K^{\gamma}_{z}$ depends analytically on both
$\gamma$ and $z$.

At this point we may introduce the family of convolution operators
with kernel $K^{\gamma}_{z}$ defined by $(\ref{KGZ})$,
   that is
   \begin{equation}
\label{SGZ}
 \left(S^{\gamma}_{z}f\right)(\xu,\xd,\xt):=
\left(K^{\gamma}_{z}*f\right)(\xu,\xd,\xt)= \int <D_{z}(\uu,\ud),
f(\xu-t,\xd- \uu t^m,\xt-\ud t^n)>|t|^{\gamma}\frac{dt}{t}.
\end{equation}

We observe that, 
in the light  of $(\ref{deltazero})$, we have
$$ \left(S^{\gamma}_{0}f\right)(\xu,\xd,\xt):=C_0
 \int_{\RR^3}
f(\xu-t,\xd- t^m,\xt- t^n)>|t|^{\gamma}\frac{dt}{t},$$ that is, at
the height $z=0$ we recover the fractional integration operator
along the curve $t\mapsto \left( t,t^m,t^n\right)$ in the space.

 In a  forthcoming paper \cite{CCiSe}
 we shall give a complete picture of the characteristic set
 of the operator
$ S^{\gamma}_{z}$. A key step in the proof of that result is  the fact 
that
at the height $\Re e z=0$ and for $\Re e\gamma=0$ the kernel
$K^{\gamma}_{z} $
is a product kernel adapted to the curve
$\xu\mapsto \left(\xu^m,\xd^n\right)$.

\end{document}